\newtheorem{theorem}{Theorem}
\newtheorem{corollary}{Corollary}
\newtheorem{remark}{Remark}
\newtheorem{lemma}{Lemma}
    \newcommand{\bilap}[1]{\Delta^{\!2}\hspace{-0.25mm}#1 }
    \newcommand{\grad}[1]{\nabla \! #1}
    \newcommand{\lap}[1]{\Delta \hspace{-0.15mm} #1 }
    \newcommand{\aq}{\Leftrightarrow} 
    \renewcommand{\epsilon}{\varepsilon}
    \newcommand{\eps}{\varepsilon}
    \newcommand{\norm}[1]{\|#1\|} 
    \newcommand{\abs}[1]{|#1|}
     \newcommand{\Ie}{\mathcal{I}_\epsilon} 
     \newcommand{\Iei}[1]{\mathcal{I}^{(#1)}_{\epsilon}} 
    \renewcommand{\L}{\Lambda}  
    \newcommand{\Le}{\L_\epsilon^{(k)}}   
     \newcommand{\Lei}[1]{\L^{(#1)}_\eps}   
     \newcommand{\R}{\mathbb{R}}   
    \newcommand{\N}{\mathbb{N}}    
    \newcommand{\uei}[1]{u^{(#1)}_{\epsilon}}	
    \newcommand{\ue}{u^{(k)}_{\epsilon}}	   
    \newcommand{\pe}[1]{p^{(#1)}_{\epsilon}}	
   \newcommand{\Ray}{\mathcal{R}}
 \renewcommand{\O}{\mathcal{O}}
  \newcommand{\dist}{\operatorname{dist}}
\begin{document}

\title{Existence of an optimal domain for the buckling load of a clamped plate with prescribed volume.}
\date{}
\author{Kathrin Stollenwerk \footnote{stollenwerk@instmath.rwth-aachen.de}}
\pagestyle{myheadings}
\maketitle 


\noindent\textbf{Abstract } We formulate the minimization of the buckling load of a clamped plate as a free boundary value problem with a penalization term for the volume constraint. As the penalization parameter becomes small we show that the optimal shape problem with prescribed volume is solved. In addition, we discuss two different choices for the penalization term.

\bigskip 
\noindent\textbf{Key words:} Inequalities involving eigenvalues, buckling load, fourth order

\bigskip
\noindent\textbf{MSC2010}: 49K20, 49R05, 15A42


\section{Introduction}

We consider the following variational problem. For $n \geq 2$ let $\Omega \subset \R^n$ be an open bounded domain. Then we define 
\[
 \Ray(v,\Omega) := \frac{\int_\Omega \abs{\lap v}^2dx }{\int_\Omega \abs{\grad v}^2dx}
\]
for $v \in H^{2,2}_0(\Omega)$ and denote $\Ray(v,\Omega)=\infty$ if the denominator vanishes. The quantity 
\[
  \Lambda(\Omega) := \inf \{\Ray(v,\Omega) : v \in H^{2,2}_0(\Omega)\}
\]
is called the buckling load of $\Omega$. The infimum is attained by the first eigenfunction $u$, which solves the following Euler-Lagrange equation
\[
\begin{cases}
  \bilap u + \Lambda(\Omega)\lap u &=0\; \mbox{in } \Omega \\
  u = \abs{\grad u} &=0 \; \mbox{on } \partial\Omega
\end{cases}
\] 
if the boundary of $\Omega$ is smooth enough. 

In 1951, G.\,Polya and G.\,Szeg\"o conjectured that among all domains of given measure the ball minimizes the buckling load (see \cite{PolyaSzego}). 

Up to now, this conjecture is still open. However, some partial results are known. 
In \cite{Szego50} Szeg\"o 
proved the conjecture for all smooth plane domains under the additional assumption that 
$u > 0$ in $\Omega$. 
M.\,S.\,Ashbaugh and D.\,Bucur proved that among all simply connected plane domains of 
prescribed volume there exists an optimal domain \cite{BucurAshbaugh}.  
In 1995, H.\,Weinberger and B.\,Willms proved the following uniqueness result for $n=2$, see \cite{Willms95}. 
If an optimal simply connected bounded plane domain $\Omega$ exists and if $\partial\Omega$ is smooth 
(at least $C^{2,\alpha}$), then $\Omega$ is a disc.  
This result has been extended to arbitrary dimension in 2015, see \cite{StoWa2015}. 

In the present paper, we will prove the following main theorem. 

\begin{theorem}\label{theo:main}
  There exists a bounded domain $\Omega^\ast$ with $\abs{\Omega^\ast}=\omega_0$ such that 
\[ 
  \L(\Omega^\ast) = \min\{\L(D): D\subset B , D \mbox{ open}, \abs{D}\leq\omega_0\},
\]
where $\abs{D}$ denotes the $n$-dimensional Lebesgue measure of $D$, $\omega_0>0$ is a given quantity and $B \subset \R^n$ is a ball with $\abs{B}>\omega_0>0$.  
 \end{theorem}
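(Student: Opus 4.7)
The plan is to follow the penalization strategy announced in the abstract. For a parameter $\eps>0$, introduce a penalized functional on the fixed Hilbert space $H^{2,2}_0(B)$ of the form
\[
 \Ie(v) := \Ray(v,B) + f_\eps\bigl(\abs{\{v\neq 0\}} - \omega_0\bigr),
\]
where $f_\eps:\R\to[0,\infty)$ vanishes on $(-\infty,0]$ and blows up on $(0,\infty)$ as $\eps\to 0$ (the paper will discuss two concrete choices). Working on the fixed ambient ball $B$ removes the usual lack of compactness in the class of admissible domains: competitors are functions in one fixed Sobolev space, not shapes. The minimization over open $D\subset B$ is recovered by reading off, from a minimizer $\ue$ of $\Ie$, the set $\Lei{\eps}:=\{\ue\neq 0\}$ (or a slightly enlarged open version thereof).

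I would first establish, for every fixed $\eps>0$, the existence of $\ue\in H^{2,2}_0(B)$ minimizing $\Ie$ by the direct method. Normalizing $\int_B\abs{\grad v}^2\,dx=1$ bounds a minimizing sequence in $H^{2,2}_0(B)$; along a subsequence one obtains weak convergence in $H^{2,2}$ and, by Rellich, strong convergence in $H^{1,2}$, hence pointwise a.e. up to a further subsequence. Weak lower semicontinuity of $v\mapsto\int_B\abs{\lap v}^2$ handles the Rayleigh numerator. The crucial semicontinuity is $\abs{\{v\neq 0\}}\le\liminf\abs{\{v_k\neq 0\}}$, which follows from Fatou's lemma applied to $\mathbf{1}_{\{v\neq 0\}}\le\liminf\mathbf{1}_{\{v_k\neq 0\}}$ on the set of a.e.\ convergence. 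This yields lower semicontinuity of $\Ie$, and existence of $\ue$ follows.

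Next I would derive $\eps$-uniform bounds. Since any ball $B_\rho\subset B$ of volume $\omega_0$ is admissible with $f_\eps=0$, one has $\Ie(\ue)\le\L(B_\rho)=:c_0$ independent of $\eps$; after normalizing $\int\abs{\grad\ue}^2=1$ this gives uniform $H^{2,2}_0(B)$-bounds on $\ue$ as well as the crucial estimate $f_\eps(\abs{\{\ue\neq 0\}}-\omega_0)\le c_0$. For the sequence of penalization functions the latter forces $\limsup_{\eps\to 0}\abs{\{\ue\neq 0\}}\le\omega_0$. Extracting a subsequence $\ue\rightharpoonup u^\ast$ weakly in $H^{2,2}_0(B)$ and strongly in $H^{1,2}(B)$, the normalization persists ($\int\abs{\grad u^\ast}^2=1$, so $u^\ast\not\equiv 0$), and weak lower semicontinuity gives $\Ray(u^\ast,B)\le\liminf\Ray(\ue,B)$. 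Setting $\Omega^\ast:=\{u^\ast\neq 0\}$ (modulo passing to a suitable open representative), Fatou as above yields $\abs{\Omega^\ast}\le\omega_0$ and $\L(\Omega^\ast)\le\Ray(u^\ast,\Omega^\ast)\le\liminf\Ray(\ue,B)\le\inf\{\L(D):D\subset B\text{ open},\abs{D}\le\omega_0\}$.

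The main obstacle, and the step that will require the delicate analysis of the Euler--Lagrange system and the penalization parameters promised by the paper, is to upgrade $\abs{\Omega^\ast}\le\omega_0$ to equality and to ensure $\Omega^\ast$ is (essentially) open. Heuristically, $\L$ is monotone decreasing under inclusion, so a strict inequality $\abs{\Omega^\ast}<\omega_0$ should be ruled out by enlarging $\Omega^\ast$ within $B$ to decrease $\L$, contradicting optimality; making this rigorous inside the penalized framework (where one must compare $\Ie$-values before passing to the limit, and where the two choices of $f_\eps$ behave differently) is the technical heart of the argument. Once this is achieved, $\Omega^\ast$ realizes the minimum in Theorem~\ref{theo:main}.
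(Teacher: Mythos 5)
Your high-level strategy -- penalize the volume constraint, minimize over $H^{2,2}_0(B)$, and read off an optimal domain from the support of a minimizer -- matches the paper's. But your proposed route through the limit $\eps\to 0$ diverges from the paper's, and your sketch contains two genuine gaps that you flag but do not close, and these are in fact the bulk of the work.

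First, the inequality $\L(\Omega^\ast)\le\Ray(u^\ast,\Omega^\ast)$ is not free: it presupposes that $\Omega^\ast$ is open and that $u^\ast\in H^{2,2}_0(\Omega^\ast)$, i.e.\ that both $u^\ast$ \emph{and} $\grad u^\ast$ vanish in the $H^{2,2}_0$-trace sense on $\partial\Omega^\ast$. A priori the weak limit $u^\ast$ is only an $H^{2,2}_0(B)$-function, $\{u^\ast\neq 0\}$ need not be open, and even an open representative does not automatically ``absorb'' $u^\ast$ as an element of $H^{2,2}_0$. The paper resolves this at each fixed $\eps$ by proving $\ue\in C^{1,\alpha}(\overline B)$ (Theorem \ref{theo:hoelder_reg}, via a bootstrap on a Morrey-type decay estimate); continuity makes $\O(\ue)$ open, and the $C^{1,\alpha}$ vanishing of $\ue$ and $\grad\ue$ on $\partial\Omega(\ue)$ gives $\ue\in H^{2,2}_0(\Omega(\ue))$. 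You would have to either carry uniform $C^{1,\alpha}$ bounds through the limit or abandon the limit; as written this step is missing.

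Second, the paper does \emph{not} send $\eps\to 0$. It fixes an explicit threshold $\eps_1=\eps_1(n,\omega_0)$ and shows that for any $\eps\le\eps_1$ the minimizer already satisfies $\abs{\Omega(\uei{0})}=\omega_0$: a local comparison with a biharmonic replacement shows $\abs{\Omega(\uei{0})}\ge\omega_0$ for every $\eps$ (Theorem \ref{theo:Vol_0_1}), and a scaling argument using $\L(tM)=t^{-2}\L(M)$ together with Remark \ref{rem:buckling_load_omega_0} shows $\abs{\Omega(\uei{0})}\le\omega_0$ once $\eps\le\eps_1$ (Theorem \ref{theo:Vol_0_2}). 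After that, the minimality of $\Iei{0}$ at that fixed $\eps$ gives $\L(\Omega(\uei{0}))\le\L(D)$ for every admissible $D$ directly, with no weak-limit passage; connectedness of $\Omega(\uei{0})$ (Lemma \ref{la:connected_0}), needed because the statement asserts a \emph{domain}, also comes for free from the comparison argument once the volume is pinned. Your proposal neither supplies a mechanism for the volume equality nor addresses connectedness. The Fatou semicontinuity, the uniform bound $\Ie(\ue)\le\L(B_\rho)$, and the compactness extraction are all correct, but they only get you to the point where the paper's actual arguments begin.
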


In order to prove Theorem \ref{theo:main}, we introduce a penalized variational problem following an idea of  H.\,W.\,Alt and L.\,A.\,Caffarelli in \cite{AltCaf81}.  Let $B \subset \R^n$ be a  ball  and choose $0<\omega_0 \ll \abs{B}$, where $\abs{B}$ denotes the $n$-dimensional Lebesgue-measure of $B$. For $\eps>0$  we define the penalization term $\pe{0} : \R \to \R$ by 
\begin{equation}\label{eq:pe0}
  \pe{0}(s) := \begin{cases}
     \frac{1}{\eps}(s-\omega_0), & s\geq \omega_0 \\
     0, & s \leq \omega_0
   \end{cases}
\end{equation}
and $\Iei{0} : H^{2,2}_0(B) \to \R$ by 
\[
   \Iei{0}(v) := \Ray(v,B) + \pe{0}(\abs{\O(v)}), 
\]
where $\O(v) := \{x\in B: v(x)\neq 0 \}$. 
We will see that for each $\eps>0$ there exists a minimizer $\uei{0} \in H^{2,2}_0(B)$ for $\Iei{0}$ and that $\uei{0}$ yields a domain $\Omega(\uei{0})$ with $\abs{\Omega(\uei{0})}=\omega_0$ which minimizes the buckling load among all open subsets $B$ with measure smaller or equal than $\omega_0$. This will prove Theorem \ref{theo:main}.

In view of the conjecture of Polya and Szegö, the next reasonable step would be to analyze regularity properties of the free boundary $\partial\Omega(\uei{0})$. If we orientate ourselves on the pioneering work of Alt and Caffarelli in \cite{AltCaf81}, the next way points towards qualitative properties of the free boundary would be establishing the nondegeneracy of $\uei{0}$ and, subsequently, proving that the free boundary has got a positive Lebesgue-density in every point. However, looking at nondegeneracy results for second order problems as in \cite{AguAltCaf1986}, \cite{AltCaf81} or \cite{BaWa09}, e.g., we see that in these settings a nondegeneracy result for the minimizing function is achieved by constructing suitable testfunctions which heavily rely on properties of the Sobolev space $H^{1,2}$. In our $H^{2,2}$ setting we do not possess any comparison principle. It seems that this might be the end point for our approach via the functional $\Iei{0}$. 

Consequently, we revise the functional $\Iei{0}$. Following an idea of N.\,Aguilera, H.\,W.\,Alt and L.\,A.\,Caffarelli in \cite{AguAltCaf1986}, we replace the penalization term $\pe{0}$ by a penalization term $\pe{1}$ which rewards volumes less than $\omega_0$ with a negative contribution to the functional. For that purpose, we define $\pe{1}:\R\to\R$ by
\begin{equation}\label{eq:pe1}
  \pe{1}(s) := \begin{cases}
                    \frac{1}{\eps}(s-\omega_0), & s\geq \omega_0 \\
						\eps(s-\omega_0), &s\leq \omega_0
					\end{cases}
\end{equation}
 and $\Iei{1}: H^{2,2}_0(B) \to\R$ by
 \[
    \Iei{1}(v) := \Ray(v,B) + \pe{1}(\abs{\O(v)}).
 \] 
We will prove that for every $\eps>0$ there exists a minimizer $\uei{1} \in H^{2,2}_0(B)$ for $\Iei{1}$ and that $\uei{1}$ yields a domain $\Omega(\uei{1})$ which minimizes the buckling load among all open subsets of $B$ with the same measure as $\Omega(\uei{1})$. The analysis of the volume of $\Omega(\uei{1})$ will be more challenging than in the case of $\Omega(\uei{0})$ since the rewarding part of the penalization term counteracts the monotonicity of the buckling load with respect to set inclusion . We will give more details on this issue in the sequel.  

The present paper is organized as follows. 
In Section \ref{sec:penprob}, we prove the existence of a minimizer $\uei{k}$ for the functional $\Iei{k}$ $(k=0,1)$ and show that $\uei{k} \in C^{1,\alpha}(\overline{B})$ for every $\alpha \in (0,1)$. Thereby, we refine the approach presented in \cite{Sto2015}. That paper deals with the functional $\Iei{0}$, but considers only $n=2$ and $n=3$. 
Consequently, the $C^{1,\alpha}$ regularity of $\uei{0}$ follows with the help of Sobolev's Embedding Theorem. In the present paper, we consider arbitrary $n\geq 2$ and refine the approach to the $C^{1,\alpha}$ regularity from \cite{Sto2015} by applying a bootstrap argument.
In Section \ref{sec:volcond}, we analyze the volume of the domains $\Omega(\uei{0})$ and $\Omega(\uei{1})$ separately. In Section \ref{sec:nonrewarding}, we consider the functional $\Iei{0}$ and prove our main theorem, Theorem \ref{theo:main}, by scaling arguments.  In Section \ref{sec:rewarding}, we consider the functional $\Iei{1}$. Thus, we deal with a penalization term, which rewards volumes less  than $\omega_0$ with a negative contribution to the functional. Hence, the behavior of the penalization term antagonizes the monotonicity of the buckling load with respect to set inclusion and  we cannot adopt the approach from Section \ref{sec:nonrewarding} to show that the optimal domain's volume cannot become less than $\omega_0$.  In fact, we will use an inequality by M.\,S.\,Ashbaugh and R.\,S.\,Laugesen (see \cite{AshLau1996}) to balance the rewarding penalization term and the monotonicity of the buckling load. Hence, provided $\eps$ is sufficiently small, $\abs{\Omega(\uei{1})} \in [\alpha_0\omega_0,\omega_0]$, where $\alpha_0 \in (\frac{1}{2},1)$ depends on $n, \omega_0$ and $\eps$. 
After refining the choice of $\eps$, we will obtain the following dichotomy:  there either holds 
\begin{compactitem}\setlength{\itemsep}{0pt}
\item  $\abs{\Omega(\uei{1})}=\omega_0$ or \\ 
\item $\abs{\Omega(\uei{1})}<\omega_0$ and if $\Omega(\uei{1})$ is scaled to the volume $\omega_0$, this enlarged domain is neither a subset of $B$ nor can it be translated into $B$.
\end{compactitem}  
In the first case, $\uei{1}$ is a minimizer of the functional $\Iei{0}$ and we can treat both functionals as equivalent in the sense that they are minimized by the same functions. 
Under the additional assumption that the free boundary $\partial\Omega(\uei{1})$ satisfies a doubling property, we will disprove the occurrence of the latter case. We assume that there exists a constant $\sigma>0$ such that for every $x_0 \in \partial\Omega(\uei{1})$ and every $0<R\leq R_0$ there holds 
\[
   \abs{B_{2R}(x_0)\cap\Omega(\uei{1})} \leq \sigma \abs{B_R(x_0)\cap\Omega(\uei{1})}. 
\]
This assumed doubling property enables us to establish a nondegeneracy property of the minimizing function $\uei{1}$ and, subsequently, we can show that the latter case of the above mentioned dichotomy cannot occur. 
We should emphasize that for proving the nondegeneracy of $\uei{1}$, besides the assumption of the doubling property, the crucial incredient is the rewarding part of the penalization term $\pe{1}$.


\section{The penalized problems}\label{sec:penprob}

In this section, we analyze the penalized problems 
\[
  \min\{\Iei{k}(v); v \in H^{2,2}_0(B)\}
\]
for $k\in\{0,1\}$. Proving the existence of  a minimizer $\ue$ for every $\eps >0$ is our first step. For the proof we refer to \cite[Theorem 2.1]{Sto2015}. 
\begin{theorem}
 For every $\eps>0$ and for $k \in \{0,1\}$ there exists a function $\ue \in H^{2,2}_0(B)$ such that
\[
   \Iei{k}(\ue) = \min\{\Iei{k}(v): v\in H^{2,2}_0(B)\}. 
\]
\end{theorem}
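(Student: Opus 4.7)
The plan is to apply the direct method of the calculus of variations. First I observe that the functional is bounded below: for $k=0$ both $\Ray(v,B)\ge 0$ and $\pe{0}\ge 0$, while for $k=1$ one has $\pe{1}(s)\ge -\eps\omega_0$ for $s\ge 0$, so $\Iei{k}\ge -\eps\omega_0$ in either case. The infimum is also finite from above, since a smooth bump function supported in a small ball $B_r\subset B$ with $|B_r|<\omega_0$ gives a test function with finite Rayleigh quotient and vanishing penalty. Next I choose a minimizing sequence $v_m\in H^{2,2}_0(B)$. Because both $\Ray(\cdot,B)$ and the set $\Omega(\cdot)$ are invariant under nonzero scaling, I may normalise so that $\int_B|\grad{v_m}|^2\,dx=1$; this is legitimate since the infimum is clearly not attained by the zero function (where the Rayleigh quotient is $+\infty$ by convention).

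With the normalisation in place, boundedness of $\Iei{k}(v_m)$ together with the lower bound on $\pe{k}$ forces $\int_B|\lap{v_m}|^2\,dx$ to stay bounded. Since on $H^{2,2}_0(B)$ the quantity $\|\lap{v}\|_{L^2}$ is equivalent to the full $H^{2,2}$-norm (Calderón–Zygmund plus Poincaré), the sequence $\{v_m\}$ is bounded in $H^{2,2}_0(B)$. Passing to a subsequence, $v_m\rightharpoonup \ue$ weakly in $H^{2,2}_0(B)$, strongly in $H^{1,2}_0(B)$ and in $L^2(B)$ by the Rellich–Kondrachov theorem, and, refining once more, pointwise almost everywhere in $B$.

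Now I verify lower semicontinuity of each ingredient. Weak convergence in $H^{2,2}_0(B)$ gives $\int_B|\lap\ue|^2\,dx\le\liminf_m\int_B|\lap{v_m}|^2\,dx$, while strong $H^{1,2}$-convergence yields $\int_B|\grad\ue|^2\,dx=\lim_m\int_B|\grad{v_m}|^2\,dx=1\neq 0$. In particular $\ue\not\equiv 0$ and $\Ray(\ue,B)\le\liminf_m\Ray(v_m,B)$. For the penalty term I exploit pointwise a.e.\ convergence: if $\ue(x)\neq 0$ then $v_m(x)\neq 0$ for all sufficiently large $m$, so $\chi_{\O(\ue)}\le\liminf_m\chi_{\O(v_m)}$ almost everywhere. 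Fatou's lemma then yields $|\O(\ue)|\le\liminf_m|\O(v_m)|$. Since both $\pe{0}$ and $\pe{1}$ are continuous and nondecreasing on $\R$, this in turn gives
\[
  \pe{k}(|\O(\ue)|)\le\liminf_m\pe{k}(|\O(v_m)|).
\]
Adding the two estimates produces $\Iei{k}(\ue)\le\liminf_m\Iei{k}(v_m)=\inf\Iei{k}$, so $\ue$ is a minimiser.

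The main subtlety I expect is the lower semicontinuity of the penalty term, since $v\mapsto|\O(v)|$ is not continuous under weak or even pointwise convergence in general (a sequence could "fatten up" its support in the limit). The argument above works only because the inequality goes in the favourable direction — the limit's support can only be larger than the liminf of the supports — and because $\pe{k}$ is monotone, so it respects this one-sided bound. The secondary technical point is ensuring the normalised minimising sequence does not degenerate to zero, which is handled by the strong $H^{1,2}$-convergence forcing $\int_B|\grad\ue|^2=1$.
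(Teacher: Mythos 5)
Your argument is correct and is the standard direct-method proof: normalise the minimising sequence, extract a weakly $H^{2,2}$-convergent subsequence that also converges strongly in $H^{1,2}$ and pointwise a.e., use weak lower semicontinuity of $\int|\lap v|^2$ and the normalisation of the gradient norm for the Rayleigh quotient, and use Fatou together with the continuity and monotonicity of $\pe{k}$ for the penalty term. The paper itself does not spell out a proof but defers to \cite[Theorem 2.1]{Sto2015}, where the same direct-method scheme is used, so your proposal matches the intended argument.
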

Without loss of generality, we assume $\ue$ to be normalized in the sense that 
\[
   \int\limits_B \abs{\grad \ue}^2dx = 1
\] 
and denote
\[
   \Le := \Ray(\ue) = \int\limits_B \abs{\lap \ue}^2\,dx.
\]
Note that $\O(\ue) = \{\ue(x)\neq0\}$ is a non-empty set. By the absolute continuity of the Lebesgue integral, the $n$-dimensional Lebesgue measure of $\O(\ue)$ cannot vanish. 
Since the minimizer $\ue \in H^{2,2}_0(B)$, we do not know any continuity properties of $\uei{k}$ yet. In particular, we do not know if $\O(\ue)$ is an open set. To prove continuity of $\ue$ we will apply an idea of Q.\,Han and F.\,Lin in \cite{HanLin}, which is based on a bootstrap argument and Morrey's Dirichlet Growth Theorem. This ansatz will lead to the $C^{1,\alpha}$ regularity of $\ue$ for every $\alpha \in (0,1)$ in arbitrary dimension $n$. 

In the sequel, we will apply the following version of Morrey's Dirichlet Growth Theorem (see \cite[Theorem 3.5.2]{morrey}). 
\begin{theorem}[Morrey's Dirichlet Growth Theorem]\label{theo:morrey}
 Suppose $\phi \in H^{1,p}_0(B)$, $1 \leq p\leq n$, $0< \alpha\leq1$ and  suppose there exists a constant $M>0$ such that 
 \[
  \int\limits_{B_r(x_0)\cap B}\abs{\grad \phi}^pdx \leq M \, r^{n-p+\alpha p}
 \]
for every $B_r(x_0)$ with $x_0 \in \overline{B}$. Then $\phi \in C^{0,\alpha}(\overline{B})$.
\end{theorem}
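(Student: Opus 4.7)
The plan is to establish the Hölder continuity of $\phi$ through the classical argument that controls the oscillation of mean values on dyadic scales. Writing $\phi_{B_r(x_0)} := \frac{1}{\abs{B_r(x_0)}}\int_{B_r(x_0)}\phi\,dx$, I will show that $\phi_{B_r(x_0)}$ converges as $r\to 0$ with rate $r^\alpha$ to a continuous representative $\tilde\phi(x_0)$ of $\phi$, and then estimate $\abs{\tilde\phi(x)-\tilde\phi(y)}$ by passing through an intermediate ball.

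First, since $\phi\in H^{1,p}_0(B)$, I extend $\phi$ by zero outside $B$, obtaining $\phi\in H^{1,p}(\R^n)$ for which the growth bound $\int_{B_r(x_0)}\abs{\grad\phi}^p\,dx\leq M\,r^{n-p+\alpha p}$ persists for every $x_0\in\overline{B}$ and every $r>0$, since the region outside $B$ contributes nothing. This eliminates boundary-geometry complications and allows a uniform application of the Poincaré--Wirtinger inequality on full balls:
\[
  \int_{B_r(x_0)}\abs{\phi-\phi_{B_r(x_0)}}^p\,dx \;\leq\; C(n,p)\,r^p\int_{B_r(x_0)}\abs{\grad\phi}^p\,dx \;\leq\; C\,M\,r^{n+\alpha p}.
\]
Comparing averages on nested balls via Jensen's inequality gives
\[
  \abs{\phi_{B_r(x_0)}-\phi_{B_{r/2}(x_0)}}^p \;\leq\; \frac{C}{r^n}\int_{B_r(x_0)}\abs{\phi-\phi_{B_r(x_0)}}^p\,dx \;\leq\; C\,r^{\alpha p},
\]
so that summing the geometric series over the dyadic radii $r_k=2^{-k}r$ identifies $(\phi_{B_{r_k}(x_0)})_k$ as a Cauchy sequence. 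Its limit $\tilde\phi(x_0)$ defines a representative satisfying $\abs{\tilde\phi(x_0)-\phi_{B_r(x_0)}}\leq C\,r^\alpha$ for all admissible $r$.

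Finally, for $x,y\in\overline{B}$ with $r:=\abs{x-y}$ small, I compare through the ball $B_{2r}(x)$, which contains both $B_r(x)$ and $B_r(y)$. Applying the same Poincaré-type estimate on $B_{2r}(x)$ to each of these subballs yields $\abs{\phi_{B_r(x)}-\phi_{B_r(y)}}\leq C\,r^\alpha$, whence
\[
  \abs{\tilde\phi(x)-\tilde\phi(y)} \;\leq\; \abs{\tilde\phi(x)-\phi_{B_r(x)}}+\abs{\phi_{B_r(x)}-\phi_{B_r(y)}}+\abs{\phi_{B_r(y)}-\tilde\phi(y)} \;\leq\; C\abs{x-y}^\alpha.
\]
The Lebesgue differentiation theorem identifies $\tilde\phi$ with $\phi$ a.e., so $\phi$ has a $C^{0,\alpha}(\overline{B})$ representative.

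The only real technical obstacle is ensuring the Poincaré inequality applies uniformly as $x_0$ ranges over $\overline{B}$, and the zero-extension trick handles this cleanly. The decisive balance of exponents is that Poincaré's factor $r^p$, multiplied by the hypothesized $r^{n-p+\alpha p}$, produces $r^{n+\alpha p}$; dividing by the $r^n$ from the normalization of the average then leaves exactly the factor $r^{\alpha p}$ that both renders the dyadic telescoping summable and delivers the Hölder rate.
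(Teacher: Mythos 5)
The paper does not prove this statement at all: it is quoted verbatim as \cite[Theorem 3.5.2]{morrey} and used as a black box. So there is no ``paper's own proof'' to compare against, and your task was effectively to supply one. What you wrote is the standard Campanato-type proof of Morrey's lemma, and it is correct. The key observations are all in place: zero extension across $\partial B$ (legitimate because $\phi\in H^{1,p}_0(B)$) removes the need for boundary-adapted Poincar\'e inequalities; Poincar\'e--Wirtinger on balls of radius $r$ converts the hypothesized Dirichlet growth $\int_{B_r}\abs{\grad\phi}^p\leq Mr^{n-p+\alpha p}$ into the Campanato bound $\int_{B_r}\abs{\phi-\phi_{B_r}}^p\leq CMr^{n+\alpha p}$; Jensen on nested balls gives $\abs{\phi_{B_r}-\phi_{B_{r/2}}}\leq Cr^\alpha$, whose dyadic summability produces the precise representative $\tilde\phi$ with $\abs{\tilde\phi(x_0)-\phi_{B_r(x_0)}}\leq Cr^\alpha$; and comparing $B_r(x),B_r(y)$ through the enclosing $B_{2r}(x)$ yields the H\"older modulus. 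The restriction $1\leq p\leq n$ plays no role in your argument (as it should not --- it merely signals that for $p>n$ the conclusion is classical Morrey embedding), and boundedness of $\tilde\phi$ on the compact $\overline{B}$ follows from its continuity, so the $C^{0,\alpha}(\overline{B})$ conclusion is complete. The one point worth stating a touch more carefully is that the growth bound extends to all $r>0$: for $r\gtrsim\operatorname{diam} B$ the left side saturates at $\int_B\abs{\grad\phi}^p$ while the right side grows, so it holds with a possibly enlarged $M$; you implicitly use this when invoking the bound on $B_{2r}(x)$ without a smallness restriction.
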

In addition, we need a technical lemma cited from \cite[Chapter III, Lemma 2.1]{giaq_mult_int}.

\begin{lemma}\label{la:tech_morrey}
 Let $\Phi$ be a nonnegative and nondecreasing function on $[0,R]$. Suppose that there exist positive constants $\gamma, \alpha, \kappa, \beta$, $\beta<\alpha$, such that for all $0\leq r\leq R \leq R_0$ 
\[
  \Phi(r) \leq \gamma\left[ \left(\frac{r}{R}\right)^\alpha + \delta \right]\Phi(R)+\kappa R^\beta.
\]
Then there exist positive constants $\delta_0 =\delta_0(\gamma,\alpha,\beta)$ and $C=C(\gamma, \alpha,\beta)$ such that if $\delta < \delta_0$, for all $0\leq r\leq R\leq R_0$ we have
\[
  \Phi(r) \leq C \left(\frac{r}{R}\right)^\beta \left[\Phi(R) + \kappa R^\beta\right]  .
\] 
\end{lemma}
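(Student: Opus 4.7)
The plan is to treat this as a standard geometric iteration lemma: the hypothesis controls $\Phi$ at a radius $r$ in terms of $\Phi$ at a larger radius $R$, with a contractive factor $\gamma[(r/R)^\alpha+\delta]$ plus a lower-order inhomogeneity $\kappa R^\beta$. If we apply the hypothesis along a geometric sequence $R_k=\tau^k R$ for a well-chosen $\tau\in(0,1)$, we can force the prefactor to be a true contraction of size $\tau^{\beta'}$ for some intermediate exponent $\beta'\in(\beta,\alpha)$, and then sum the resulting geometric series.

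First I would fix such an intermediate exponent $\beta'\in(\beta,\alpha)$ (this is where the strict inequality $\beta<\alpha$ is used). Since $\alpha>\beta'$, I can choose $\tau\in(0,1)$ small enough that $2\gamma\tau^\alpha\le\tau^{\beta'}$, and then set
\[
  \delta_0 := \frac{\tau^{\beta'}}{2\gamma},
\]
so that for every $\delta<\delta_0$ the hypothesis applied with $r=\tau R$ gives
\[
  \Phi(\tau R)\le\tau^{\beta'}\Phi(R)+\kappa R^\beta.
\]
Both $\tau$ and $\delta_0$ depend only on $\gamma,\alpha,\beta$, as required.

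Next I would iterate this one-step estimate. Setting $R_k=\tau^k R$ and using the one-step inequality repeatedly, I obtain
\[
  \Phi(R_k)\le\tau^{k\beta'}\Phi(R)+\kappa R^\beta\sum_{j=0}^{k-1}\tau^{j\beta'}\tau^{(k-1-j)\beta}.
\]
Because $\beta'>\beta$, the geometric sum is dominated by $\tau^{(k-1)\beta}/(1-\tau^{\beta'-\beta})$, which yields
\[
  \Phi(R_k)\le C\,\tau^{k\beta}\bigl[\Phi(R)+\kappa R^\beta\bigr]
\]
with $C=C(\gamma,\alpha,\beta)$. It is crucial to take $\beta'$ strictly between $\beta$ and $\alpha$; using $\beta'=\beta$ would introduce an extra factor of $k$ from the sum and destroy the desired estimate.

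Finally, for an arbitrary $r\in(0,R]$ I would pick the unique $k\ge 0$ with $\tau^{k+1}R<r\le\tau^k R$ and invoke the monotonicity of $\Phi$: $\Phi(r)\le\Phi(R_k)$. Since $\tau^k<r/(\tau R)$, this gives $\tau^{k\beta}\le\tau^{-\beta}(r/R)^\beta$, and combining with the iterated estimate yields
\[
  \Phi(r)\le C'\,\Bigl(\frac{r}{R}\Bigr)^{\!\beta}\bigl[\Phi(R)+\kappa R^\beta\bigr],
\]
with $C'=C\tau^{-\beta}$ still depending only on $\gamma,\alpha,\beta$. The main subtlety is the selection of the triple $(\tau,\beta',\delta_0)$: one must simultaneously achieve a true contraction at each step and retain summability of the inhomogeneous contribution, which is precisely what the strict separation $\beta<\beta'<\alpha$ buys; the rest is bookkeeping with monotonicity.
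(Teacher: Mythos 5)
Your proof is correct, and it is the standard geometric-iteration argument for this lemma: fix an intermediate exponent $\beta'\in(\beta,\alpha)$, choose $\tau$ small so that $2\gamma\tau^\alpha\le\tau^{\beta'}$ and set $\delta_0=\tau^{\beta'}/(2\gamma)$ to turn the prefactor into a genuine contraction $\tau^{\beta'}$ at each step, sum the geometric series using $\beta'>\beta$, and convert from dyadic scales to arbitrary $r$ via monotonicity. The paper itself does not reprove this lemma but cites it verbatim from Giaquinta (Chapter III, Lemma 2.1), and your argument is precisely the proof given there, so there is no methodological divergence to report.

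One small remark worth being explicit about: the conclusion as stated includes $r=0$, which formally forces $\Phi(0)=0$. This is consistent with the hypothesis once $\delta<\delta_0$: taking $r=0$ gives $\Phi(0)\le\gamma\delta\,\Phi(R)+\kappa R^\beta$, and sending $R\to 0$ yields $\Phi(0)\le\gamma\delta\,\Phi(0)$ with $\gamma\delta<1$, hence $\Phi(0)=0$. Your iteration handles all $r>0$; appending this observation closes the endpoint case. Otherwise the bookkeeping, the role of the strict inequality $\beta<\alpha$, and the dependence of $\delta_0$ and $C$ only on $(\gamma,\alpha,\beta)$ are all handled correctly.
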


Let us fix $R_{\eps,k}$ as 
\[
  R_{\eps,k} := \min\left\{1, \left(\frac{\abs{\O(\ue)}}{2\omega_n}\right)^\frac{1}{n}\right\}.
\]
Now suppose $x_0 \in \overline{B}$ and choose $0<r\leq R\leq R_{\eps,k}$. We define $\hat{v}_k \in H^{2,2}_0(B)$ by 
\begin{equation}\label{eq:hat_v}
 \hat{v}_k = \begin{cases}
   \ue, &\mbox{in } B\setminus B_R(x_0) \\
   v_k, & \mbox{in } B_R(x_0)
 \end{cases},
\end{equation}
for a $v_k \in H^{2,2}(B_R(x_0)\cap B)$ such that $v_k-\ue \in H^{2,2}_0(B_R(x_0)\cap B)$ and $\bilap v_k =0$ in $B_R(x_0)\cap B$. 
We restrict ourselves to case $B_R(x_0)\cap  \O(\ue)\neq \emptyset$; otherwise, $\ue-v_k$ vanishes in $B_R(x_0)$.

As in \cite[Lemma 2.1]{Sto2015} we prove the next lemma. 
\begin{lemma}\label{biharm_v}
  Using the above notation, there exists a constant $C>0$ which only depends on $n$ such that for $0\leq r < R$ there holds
\begin{equation*}
    \int\limits_{B_r(x_0)\cap B}\abs{D^2 v_k}^2dx \leq C\,\left(\frac{r}{R}\right)^n \,\int\limits_{B_R(x_0)\cap B}\abs{D^2\ue}^2dx\,.
\end{equation*}
\end{lemma}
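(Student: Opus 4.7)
The plan is to combine two classical ingredients: an energy comparison replacing $\int\abs{D^2 v_k}^2$ by $\int\abs{D^2\ue}^2$ on the right-hand side, and an interior $L^2$-decay estimate for the second derivatives of a biharmonic function, which supplies the $(r/R)^n$ factor.

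For the energy comparison, I would set $\phi := \ue - v_k \in H^{2,2}_0(B_R(x_0)\cap B)$ and expand
\[
 \int\limits_{B_R(x_0)\cap B}\abs{D^2\ue}^2\,dx = \int\limits_{B_R(x_0)\cap B}\abs{D^2 v_k}^2\,dx + 2\int\limits_{B_R(x_0)\cap B} D^2 v_k : D^2\phi\,dx + \int\limits_{B_R(x_0)\cap B}\abs{D^2\phi}^2\,dx.
\]
Since $\phi$ and $\grad\phi$ have vanishing trace on $\partial(B_R(x_0)\cap B)$, two integrations by parts convert the cross term into $\int\phi\,\bilap v_k\,dx$, which is zero by biharmonicity of $v_k$ in $B_R(x_0)\cap B$. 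Dropping the nonnegative last term yields $\int_{B_R(x_0)\cap B}\abs{D^2 v_k}^2\,dx \leq \int_{B_R(x_0)\cap B}\abs{D^2\ue}^2\,dx$.

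For the decay in the interior case $B_R(x_0)\subset B$, biharmonicity makes $\lap v_k$ harmonic, so each entry of $D^2 v_k$ satisfies a mean-value-type bound that gives $\norm{D^2 v_k}_{L^\infty(B_{R/2}(x_0))}^2 \leq C\,R^{-n}\int_{B_R(x_0)}\abs{D^2 v_k}^2\,dx$ with $C$ depending only on $n$. Multiplying by $\abs{B_r(x_0)}$ produces the desired $(r/R)^n$ bound for $r\leq R/2$; the range $R/2 < r \leq R$ is handled by the trivial monotone estimate $\int_{B_r}\leq \int_{B_R}\leq 2^n (r/R)^n\int_{B_R}$. Combining with the energy step concludes this case.

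The main obstacle is the boundary case $B_R(x_0)\cap\partial B\neq\emptyset$. The crucial input is that $v_k = \abs{\grad v_k} = 0$ on $\partial B\cap B_R(x_0)$ in the trace sense, inherited from $\ue \in H^{2,2}_0(B)$ via the condition $v_k - \ue \in H^{2,2}_0(B_R(x_0)\cap B)$. Extending $v_k$ by zero across $\partial B$ therefore produces $\tilde v_k \in H^{2,2}(B_R(x_0))$ with $\int_{B_\rho(x_0)\cap B}\abs{D^2 v_k}^2\,dx = \int_{B_\rho(x_0)}\abs{D^2\tilde v_k}^2\,dx$ for all $\rho\leq R$. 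The difficulty is that this extension need not be globally biharmonic: possible jumps of $\lap v_k$ and its normal derivative across $\partial B$ give distributional contributions supported on $\partial B\cap B_R(x_0)$. To push the interior decay estimate past this defect, I would locally straighten $\partial B$ by a smooth diffeomorphism (admissible since $R\leq R_{\eps,k}\leq 1$ while $\partial B$ is smooth) and then exploit the vanishing Cauchy data of $v_k$ on the flattened interface, e.g.\ via an even reflection, to obtain a biharmonic function on a full ball to which the interior estimate applies. Combining this with the energy comparison completes the proof.
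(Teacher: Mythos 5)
Your two-step plan --- first the Pythagorean/minimality inequality $\int_{B_R\cap B}|D^2 v_k|^2 \le \int_{B_R\cap B}|D^2\ue|^2$ via orthogonality, then a Campanato-type decay estimate for the biharmonic function $v_k$ --- is the standard route; the present paper defers the proof to \cite[Lemma 2.1]{Sto2015}, which follows the same outline. The energy comparison is correct: for $\phi=\ue-v_k\in H^{2,2}_0(B_R(x_0)\cap B)$ one has $\int D^2 v_k : D^2\phi = \int \lap{v_k}\,\lap\phi=0$ by weak biharmonicity (the two bilinear forms $\int D^2\cdot:D^2\cdot$ and $\int\lap\cdot\,\lap\cdot$ agree against $H^{2,2}_0$ test functions, so no fourth derivatives of $v_k$ are needed), and dropping $\int|D^2\phi|^2\ge 0$ yields the comparison. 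The interior decay is also sound; a clean way to justify the $L^\infty$ bound is to observe that each $\partial_{ij}v_k$ is itself biharmonic and apply the interior $L^\infty$--$L^2$ estimate for constant-coefficient elliptic equations to it.

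The gap is in the half-ball case, and it contains one concrete error. \emph{Even} reflection does not preserve biharmonicity under the clamped data $v=\partial_n v=0$ on a flat interface: $v(x)=x_n^3$ on $\{x_n>0\}$ is biharmonic with vanishing Cauchy data, yet $\bilap{|x_n|^3} = 12\,\delta_{\{x_n=0\}}\neq 0$. The extension that works is Duffin's reflection, which is a third-order formula involving $\partial_n v$ and $\lap{v}$, not a parity reflection. Moreover, after straightening $\partial B$ the operator $\Delta^{2}$ becomes a fourth-order operator with variable coefficients, so one cannot simply combine the flattening with the constant-coefficient interior estimate as written. What the lemma really rests on in the boundary case is the up-to-the-boundary Campanato decay for the biharmonic Dirichlet problem on a smooth boundary portion, in the spirit of Agmon--Douglis--Nirenberg or Giaquinta's treatment of linear elliptic systems. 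Since $B$ is a fixed ball and $R\le R_{\eps,k}\le 1$, the resulting constant can then indeed be taken to depend only on $n$ as the lemma asserts.
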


The next lemma will be the starting point for our bootstrap argument. 

\begin{lemma}\label{la:est2}
   Let $\ue$ be a minimizer of $\Iei{k}$ and $v_k-\ue \in H^{2,2}_0(B_R(x_0)\cap B)$. Then there exists a constant $C = C(n,\Le,\abs{\O(\ue)}) > 0$ such that for each $x_0 \in \overline{B}$ and each $0<R\leq R_{\eps,k}$ there holds
 \[
   \int\limits_{B_R(x_0)\cap B}\abs{D^2 (\ue-v_k)}^2dx \,\leq\,  C(n,\Le,\abs{\O(\ue)})\left(R^n+\int\limits_{B_R\cap B}\abs{\grad\ue}^2dx\right).
 \] 
\end{lemma}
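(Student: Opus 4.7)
The strategy is to test the minimality of $\ue$ against the competitor $\hat{v}_k$ from \eqref{eq:hat_v}, using the biharmonicity of $v_k$ to decouple the Laplacian energies and Poincaré to close the estimate. Write $\phi := \ue - v_k \in H^{2,2}_0(B_R(x_0)\cap B)$, $X^2 := \int_{B_R\cap B}|D^2\phi|^2\,dx$, $A := \int_{B_R\cap B}|\grad\phi|^2\,dx$ replaced by $A := \int_{B_R\cap B}|\grad\ue|^2\,dx$, and $A' := \int_{B_R\cap B}|\grad v_k|^2\,dx$. The first ingredient is a Pythagoras identity: since $\bilap v_k = 0$ and $\phi$, $\grad\phi$ vanish on $\partial(B_R\cap B)$, two integrations by parts give $\int \Delta v_k\,\Delta\phi\,dx = 0$, so
\[
\int_{B_R\cap B}|\Delta\ue|^2\,dx \;-\; \int_{B_R\cap B}|\Delta v_k|^2\,dx \;=\; \int_{B_R\cap B}|\Delta\phi|^2\,dx,
\]
and since $\phi \in H^{2,2}_0$, a further integration by parts identifies this quantity with $X^2$.

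The second ingredient is the comparison $\Iei{k}(\ue) \leq \Iei{k}(\hat{v}_k)$. Because $\int_B|\grad\ue|^2 = 1$, the denominator of $\Ray(\hat{v}_k,B)$ equals $1 - A + A'$. Since $\pe{k}$ is Lipschitz with constant at most $1/\eps$ and $\bigl||\O(\hat{v}_k)| - |\O(\ue)|\bigr| \leq \omega_n R^n$, clearing the denominator and invoking the Pythagoras identity reduces minimality to the master inequality
\[
X^2 \;\leq\; \Le\,(A - A') \;+\; \tfrac{\omega_n}{\eps}\,R^n\,(1 - A + A').
\]
Extending $\phi$ by zero to the ambient ball $B_R(x_0)$ places each component of $\grad\phi$ in $H^{1,2}_0(B_R(x_0))$---this uses $\phi \in H^{2,2}_0(B_R\cap B)$ crucially, since $B_R(x_0)\cap B$ need not be a ball---so Poincaré yields $\|\grad\phi\|_{L^2}^2 \leq C(n)R^2 X^2$, hence $A' \leq 2A + 2C(n)R^2 X^2$ and $1 - A + A' \leq 2 + 2C(n)R^2 X^2$.

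Inserting these bounds into the master inequality gives
\[
X^2 \;\leq\; \Le\,A \;+\; \tfrac{2\omega_n}{\eps}R^n \;+\; \tfrac{2 C(n)\omega_n}{\eps}R^{n+2}\,X^2,
\]
and for $R \leq R_{\eps,k}$ with $R_{\eps,k}$ shrunk (if necessary) so that $\tfrac{2C(n)\omega_n}{\eps}R^{n+2} \leq \tfrac{1}{2}$, the last term absorbs into the left-hand side, producing the claim with a constant of the form $C(n,\Le,|\O(\ue)|)$ (the $\eps$-dependence being repackaged as $|\O(\ue)|$-dependence through the definition of $R_{\eps,k}$). I expect the absorption to be the main obstacle: the Lipschitz constant $1/\eps$ of $\pe{k}$ produces a coefficient of order $R^{n+2}/\eps$ in front of $X^2$, so one must either restrict the admissible range of $R$ or carefully trace the $\eps$-dependence through $\Le$ and $|\O(\ue)|$. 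A secondary subtlety, handled above by the zero-extension trick, is that $B_R(x_0)\cap B$ is not itself a ball, so the clean Poincaré inequality with radius $R$ is available only after extending $\phi$ to $B_R(x_0)$.
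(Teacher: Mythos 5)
Your argument reaches an inequality with the right shape, but it does not prove the lemma as stated, and the remedy you sketch for the $\eps$-dependence does not work. After the absorption step your bound is $X^2 \leq 2\Le A + \tfrac{4\omega_n}{\eps}R^n$, so the resulting constant depends on $\eps$ explicitly. You claim this can be ``repackaged as $|\O(\ue)|$-dependence through the definition of $R_{\eps,k}$,'' but $R_{\eps,k} = \min\{1,(|\O(\ue)|/(2\omega_n))^{1/n}\}$ depends only on $|\O(\ue)|$, not on $\eps$, so there is no such mechanism. Moreover, absorbing the $\tfrac{C(n)\omega_n}{\eps}R^{n+2}X^2$ term requires restricting $R \lesssim \eps^{1/(n+2)}$, which changes the admissible range in the lemma's statement. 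Neither issue is cosmetic: the whole point of the stated constant $C(n,\Le,|\O(\ue)|)$ is to avoid any $1/\eps$ blow-up, and your Lipschitz estimate $|\pe{k}(s)-\pe{k}(s')|\leq \tfrac{1}{\eps}|s-s'|$ is exactly what reintroduces it.

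The paper circumvents this. It splits cases on $|\O(\hat v_k)|$ versus $|\O(\ue)|$. If $|\O(\hat v_k)| \leq |\O(\ue)|$, the monotonicity of $\pe{k}$ makes the penalization harmless and one compares Rayleigh quotients directly. If $|\O(\hat v_k)| > |\O(\ue)|$, instead of Lipschitz-continuity of $\pe{k}$ the paper scales: with $\mu := (|\O(\hat v_k)|/|\O(\ue)|)^{1/n}>1$, the competitor $w_k(x):=\hat v_k(\mu x)$ has $|\O(w_k)|=|\O(\ue)|$, so the penalization terms cancel in $\Iei{k}(\ue)\leq\Iei{k}(w_k)$. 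The scaling factor contributes $\Le(1-\mu^{-2})$, and $1-\mu^{-2}\leq C(n,|\O(\ue)|)R^n$ via $\mu\leq(1+|B_R|/|\O(\ue)|)^{1/n}$ and Taylor expansion. This is the key idea your proof is missing; its benefit is precisely that $\eps$ never enters. Two smaller remarks: your clearing of the denominator requires $1-A+A'>0$, which does hold because the definition of $R_{\eps,k}$ forces $\O(\ue)\not\subset B_R(x_0)$ and hence $A<1$, but this should be said; and since $v_k$ is merely biharmonic (not a solution of the full Euler--Lagrange equation), no Poincar\'e step is actually needed in the paper's route --- the $\Le A$ term appears directly from clearing the denominator, and $1-\mu^{-2}$ supplies $R^n$ --- so your Poincar\'e/absorption machinery is extra work created by the Lipschitz detour.
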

\begin{proof}
 We obtain the result by comparing the $\Iei{k}$-energies of $\ue$ and $\hat{v}_k$, where  $\hat{v}_k$ is defined as in \eqref{eq:hat_v}.
Note that $\O(\ue) \not\subset B_R(x_0)$ due to the definition of $R_{\eps,k}$. However, we have to distinguish two different cases  depending on the volume of $\O(\hat{v}_k)$. First, let us consider that $\abs{\O(\hat{v}_k)} \leq \abs{\O(\ue)}$. 
Then the minimality of $\ue$ for $\Iei{k}$ implies
\[
 \int\limits_{B_R(x_0)\cap B}\abs{\lap\ue}^2-\abs{\lap{v}_k}^ 2dx \leq \Le \int\limits_{B_R(x_0)\cap B}\abs{\grad\ue}^2dx.
\]
Since $v_k$ is biharmonic in $B_R(x_0)\cap B$ and $v_k-\ue \in H^{2,2}_0(B_R(x_0)\cap B)$, we obtain
\[
 \int\limits_{B_R(x_0)\cap B}\abs{D^2(\ue-v_k)}^2dx \leq \Le \int\limits_{B_R(x_0)\cap B}\abs{\grad\ue}^2dx 
\]
and the claim is proven. 

Now let us assume that there holds 
\[ 
 \abs{\O(\ue)} < \abs{\O(\hat{v}_k)} \leq \abs{\O(\ue)}+\abs{B_R}.
\] 
In order to avoid the penalization term while comparing $\Iei{k}(\ue)$ and $\Ie(\hat{v}_k)$ we scale $\hat{v}_k$. 
For this purpose, we set 
\[
   \mu := \left( \frac{\abs{\O(\hat{v}_k)}}{\abs{\O(\ue)}}\right)^\frac{1}{n} >1. 
\] 
Without loss of generality, we think of $B$ as of a ball with center in the origin and radius $R_B$. Then  we denote $B^\ast := B_{\mu^{-1}{R_B}}$
and $w_k(x) := \hat{v}_k(\mu x)$ for $x \in B^\ast$.  Consequently, $w_k\in H^{2,2}_0(B^\ast) \subset H^{2,2}_0(B)$ and $\abs{\O(w_k)}=\abs{\O(\ue)}$. 
The minimality of $\ue$ for $\Iei{k}$ in $H^{2,2}_0(B)$ then implies
\begin{equation*}
\Le\int_B\abs{\grad w_k}^2 dy \leq \int\limits_B\abs{\lap w_k}^2dy \; \Leftrightarrow \; \Le\mu^{-2}\int\limits_{B}\abs{\grad \hat{v}_k}^2dx \leq \int\limits_{B}\abs{\lap\hat{v}_k}^2dx\,.
\end{equation*}
Rearranging terms we obtain the local inequality
\[
 \int\limits_{B_R\cap B}\abs{\lap{\ue}}^2- \abs{\lap{v}_k}^2dx \leq \Le\left(1-\frac{1}{\mu^2}\right) + \frac{\Le}{\mu^2}\left[\,\,\int\limits_{B_R\cap B}\abs{\grad{\ue}}^2-\abs{\grad{v_k}}^2dx\right],
\]
where we denote $B_R= B_R(x_0)$ for simplicity. Since $v_k$ is biharmonic in $B_R\cap B$ and $v_k-\ue \in  H^{2,2}_0(B_R\cap B)$, we obtain 
\begin{equation*}
  \int\limits_{B_R\cap B}\abs{\lap{(\ue-v_k)}}^2dx \,\leq\, \Le\left(1-\frac{1}{\mu^2}\right) + \frac{\Le}{\mu^2}\,\int\limits_{B_R\cap B}\abs{\grad\ue}^2dx. 
\end{equation*}
Note that  there holds
\[
 1 < \mu \leq \left(1+\frac{\abs{B_R}}{\abs{\O(\ue)}}\right)^\frac{1}{n}.
\]
and, using Taylor's expansion, we  find
\begin{equation*}
 1-\mu^{-2} \leq  C(n,\abs{\O(\ue)})\,R^n.
\end{equation*}
Thus, 
\begin{align*}
  \int\limits_{B_R(x_0)\cap B}\abs{\lap{(\ue-v_k)}}^2dx \,
  &\leq \, C(n,\Le,\abs{\O(\ue)})\left(R^n+\int\limits_{B_R\cap B}\abs{\grad\ue}^2dx\right).
  \end{align*}
\end{proof}

Note that if we restrict ourselves to the dimensions $n=2$ and $n=3$, we could improve the statement of Lemma \ref{la:est2} using Sobolev's embedding theorem.  This is how the $C^{1,\alpha}$ regularity of $\uei{0}$ is proven in \cite{Sto2015} (cf. Lemma 2.2 and Theorem 2.4 in \cite{Sto2015}). 
Since we now consider any $n\geq 2$, we need the bootstrapping. 
The next lemma is the essential tool for this new approach to the $C^{1,\alpha}$ regularity of $\ue$. It is based on ideas of \cite[Chapter 3]{HanLin}.

\begin{lemma}\label{la:bootstrap1_buck}
 Suppose that for each $0\leq r\leq R_{\eps,k}$ there holds 
 \[
  \int\limits_{B_r(x_0)} \abs{D^2\ue}^2dx \leq M\,r^\mu,
 \]
where  $M>0$ and $\mu \in [0,n)$. 
Then there exists a constant $C(n,\abs{\O(\ue)})>0$ such that for each $0\leq r\leq R_{\eps,k}$
\[
 \int\limits_{B_r(x_0)}\abs{\grad \ue}^2dx \leq C(n,\abs{\O(\ue)})(1+M)\,r^\lambda,
\]
where $\lambda = \mu +2$ if $\mu < n-2$ and $\lambda$ is arbitrary in $(0,n)$ if $n-2\leq \mu < n$.
\end{lemma}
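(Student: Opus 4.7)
The plan is to derive a decay inequality of the form
\[
   \phi(r) \le C\bigl[(r/R)^n + R^2\bigr]\phi(R) + C(1+M)R^{\mu+2}, \qquad \phi(r) := \int_{B_r(x_0)\cap B}\abs{\nabla\ue}^2\,dx,
\]
valid for $0 < r \le R \le R_{\eps,k}$, and then to invoke Lemma \ref{la:tech_morrey} to promote this into a clean $r^\lambda$ decay. The biharmonic comparison $v_k$ from \eqref{eq:hat_v} serves as the bridge between the hypothesis on $D^2\ue$ and the desired conclusion on $\nabla\ue$.

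To set up the decay inequality I apply Pythagoras in $L^2$,
\[
   \phi(r) = \int_{B_r(x_0)\cap B}\abs{\nabla\ue - (\nabla\ue)_{B_r(x_0)}}^2\,dx + \omega_n r^n\abs{(\nabla\ue)_{B_r(x_0)}}^2.
\]
The oscillation term is handled by Poincaré's inequality and the standing hypothesis,
\[
   \int_{B_r(x_0)\cap B}\abs{\nabla\ue - (\nabla\ue)_{B_r(x_0)}}^2\,dx \le C r^2\int_{B_r(x_0)\cap B}\abs{D^2\ue}^2\,dx \le C M\,r^{\mu+2},
\]
which is precisely where the two-power gain from $\mu$ to $\mu+2$ is produced. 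For the mean, I split $(\nabla\ue)_{B_r} = (\nabla v_k)_{B_r} + (\nabla(\ue - v_k))_{B_r}$. The error contribution is controlled by Jensen together with the double Poincaré inequality, legitimate because $\ue - v_k \in H^{2,2}_0(B_R(x_0)\cap B)$, followed by Lemma \ref{la:est2}:
\[
   \omega_n r^n\abs{(\nabla(\ue - v_k))_{B_r}}^2 \le \int_{B_R(x_0)\cap B}\abs{\nabla(\ue - v_k)}^2\,dx \le C R^2\bigl(R^n + \phi(R)\bigr).
\]
The biharmonic contribution is handled via the interior $L^\infty$ estimate for the biharmonic function $\nabla v_k$, which yields
\[
   \omega_n r^n\abs{(\nabla v_k)_{B_r}}^2 \le C\,(r/R)^n \int_{B_R(x_0)\cap B}\abs{\nabla v_k}^2\,dx,
\]
and the right-hand side is further bounded by $C(r/R)^n\bigl[\phi(R) + CR^2(R^n+\phi(R))\bigr]$ through the triangle inequality and the same Poincaré--Lemma \ref{la:est2} combination. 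Assembling these estimates and using $r \le R \le 1$ together with $\mu + 2 \le n+2$ to absorb the $CR^{n+2}$ contribution into $C(1+M)R^{\mu+2}$ yields the claimed decay inequality.

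Finally, I apply Lemma \ref{la:tech_morrey} with $\alpha = n$, $\delta = R^2$, $\kappa = C(1+M)$ and either $\beta = \mu + 2$ when $\mu < n - 2$, or $\beta = \lambda \in (0,n)$ arbitrary when $n - 2 \le \mu < n$; the smallness condition $\delta < \delta_0$ is met by shrinking $R_0$ further if necessary. Evaluating the resulting estimate at $R = R_{\eps,k}$ and using the normalization $\phi(R_{\eps,k}) \le 1$ then produces $\phi(r) \le C(n,\abs{\O(\ue)})(1+M)\,r^\lambda$, as claimed. The main obstacle I anticipate is the boundary geometry: when $x_0$ is close to $\partial B$, the set $B_R(x_0) \cap B$ is not a ball, so the interior $L^\infty$ estimate for the biharmonic $\nabla v_k$ must be replaced by a boundary version that exploits the homogeneous Dirichlet conditions $v_k = \abs{\nabla v_k} = 0$ on $B_R(x_0)\cap \partial B$ (inherited from $\ue \in H^{2,2}_0(B)$).
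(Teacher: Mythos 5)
Your argument is far more elaborate than what is needed, and it contains one unresolved step that the paper's proof avoids entirely.

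The extra machinery comes from your choice of center for the Poincar\'e/Pythagoras split: you subtract the mean of $\nabla\ue$ over the \emph{small} ball $B_r$, which forces you to control $\abs{(\nabla\ue)_{B_r}}^2$ by hand, and for that you bring in the biharmonic comparison $v_k$, Jensen, a double Poincar\'e, Lemma~\ref{la:est2}, and an $L^\infty$ estimate for the biharmonic function $\nabla v_k$. The paper instead subtracts the mean over the \emph{large} ball $B_s$ ($r\leq s$): after Young's inequality, the mean term is bounded by Jensen's inequality alone, $\abs{B_r}\,\bigl(\fint_{B_s(x_0)}\partial_i\ue\,dx\bigr)^2 \leq (r/s)^n\int_{B_s(x_0)}\abs{\partial_i\ue}^2dx$, while the oscillation term is bounded by Poincar\'e on $B_s$ and the hypothesis, $\int_{B_s(x_0)}\abs{\partial_i\ue-(\partial_i\ue)_{s,x_0}}^2dx \leq Cs^2\int_{B_s(x_0)}\abs{D^2\ue}^2dx \leq CMs^{\mu+2}$. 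This gives $\phi(r)\leq C(n)\bigl[(r/s)^n\phi(s)+Ms^{\mu+2}\bigr]$ with no $\delta$-term, so Lemma~\ref{la:tech_morrey} applies with $\delta=0$, there is no need to shrink $R_{\eps,k}$, and $v_k$ never enters. Evaluating at $s=R_{\eps,k}$ and using the normalization $\phi(R_{\eps,k})\leq1$ gives the claim.

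The concrete gap in your version is the boundary $L^\infty$ estimate you invoke for $\nabla v_k$ when $x_0$ is near $\partial B$. You correctly observe that the region $B_R(x_0)\cap B$ is not a ball and that $v_k$ satisfies mixed conditions (biharmonic with $v_k-\ue\in H^{2,2}_0(B_R(x_0)\cap B)$, and inheriting $v_k=\abs{\nabla v_k}=0$ on $\partial B$), but you do not supply that estimate, and it is not a routine citation: pointwise gradient bounds for biharmonic functions up to a corner-type boundary require a separate argument. Until that is filled in, your proof is incomplete precisely in the case the paper's formulation is careful to cover (it states everything for $x_0\in\overline{B}$). The simpler route above makes the question moot and is the one you should adopt.
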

\begin{proof}
 Let $0\leq r\leq s \leq R_{\eps,k}$. For a function $w \in H^{2,2}(B)$ we set 
 \[
  (w)_{r,x_0} := \fint\limits_{B_r(x_0)}w\,dx = \frac{1}{\abs{B_r(x_0)}}\int\limits_{B_r(x_0)}w\,dx.
 \]
Using this notation we write
\[
\int\limits_{B_r(x_0)}\abs{\grad \ue}^2dx = \sum_{i=1}^n \int\limits_{B_r(x_0)}\abs{\partial_i\ue - (\partial_i\ue)_{s,x_0} + (\partial_i\ue)_{s,x_0}}^2dx.
\]
Then Young's inequality implies
\begin{align*}
 \int\limits_{B_r(x_0)}\abs{\grad \ue}^2dx &\leq 2 \sum_{i=1}^n \left( \int\limits_{B_r(x_0)}(\partial_i\ue)^2_{s,x_0}dx + \int\limits_{B_r(x_0)}\abs{\partial_i\ue - (\partial_i\ue)_{s,x_0}}^2dx \right) \\
 &\leq2 \sum_{i=1}^n \left(\abs{B_r} \left(\fint\limits_{B_s(x_0)}\partial_i\ue\,dx \right)^2 + \int\limits_{B_s(x_0)}\abs{\partial_i\ue - (\partial_i\ue)_{s,x_0}}^2dx \right).
\end{align*}

Applying H\"older's and a local version of Poincaré's inequality, we find that
\begin{align*}
 \int\limits_{B_r(x_0)}\abs{\grad \ue}^2dx &\leq C(n) \left[ \left(\frac{r}{s}\right)^n \int\limits_{B_s(x_0)}\abs{\grad \ue}^2dx + s^2\int\limits_{B_s(x_0)}\abs{D^2\ue}^2dx  \right],
\end{align*}
where the constant $C$ only depends on $n$.
By assumption, we can proceed to
\begin{align*}
 \int\limits_{B_r(x_0)}\abs{\grad \ue}^2dx &\leq C(n) \left[\left(\frac{r}{s}\right)^n \int\limits_{B_s(x_0)}\abs{\grad \ue}^2dx +M\, s^{\mu+2} \right].
\end{align*}
Now Lemma \ref{la:tech_morrey} implies that for each $0\leq r\leq s\leq R_\eps$ there holds
\begin{align*}
 \int\limits_{B_r(x_0)}\abs{\grad \ue}^2dx &\leq C(n)  \left(\frac{r}{s}\right)^\lambda \left[\int\limits_{B_{s}(x_0)}\abs{\grad \ue}^2dx + M\,s^\lambda  \right].
 \intertext{where $\lambda = \mu +2$ if $\mu< n-2$ and $\lambda$ is arbitrary in $(0,n)$ if $n-2 \leq \mu < n$. Now we choose $s=R_\eps$. Recalling that $R_\eps$  depends on $\abs{\O(\ue)}$ we deduce}
 \int\limits_{B_r(x_0)}\abs{\grad \ue}^2dx &\leq C(n,\abs{\O(\ue)})(1+M)r^\lambda.
\end{align*}
\end{proof}

Now we are ready to prove the main theorem of this section. 

\begin{theorem}\label{theo:hoelder_reg}
 For every $\eps>0$ and $k\in\{0,1\}$ every minimizer $\ue$ of $\Iei{k}$ is in $C^{1,\alpha}(\overline{B})$ for every $\alpha \in (0,1)$.
\end{theorem}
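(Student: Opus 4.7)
The plan is to close the estimate loop furnished by Lemmas~\ref{biharm_v} and~\ref{la:est2}, iterate it against Lemma~\ref{la:bootstrap1_buck}, and conclude with Morrey's Dirichlet Growth Theorem. Fix $x_0 \in \overline{B}$, $0 < r \leq R \leq R_{\eps,k}$, and set $\Phi(r) := \int_{B_r(x_0) \cap B} \abs{D^2 \ue}^2 \, dx$. Decomposing $D^2 \ue = D^2 v_k + D^2(\ue - v_k)$ and invoking Lemmas~\ref{biharm_v} and~\ref{la:est2} via the triangle inequality yields
\[
\Phi(r) \leq C_1 (r/R)^n \, \Phi(R) + C_2 \Big( R^n + \int_{B_R(x_0) \cap B} \abs{\grad \ue}^2 \, dx \Big),
\]
with $C_1 = C_1(n)$ and $C_2 = C_2(n, \Le, \abs{\O(\ue)})$.

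Given any a priori growth $\int_{B_R(x_0) \cap B} \abs{\grad \ue}^2 \, dx \leq M R^\lambda$ for some $\lambda \in [0, n)$, the inequality $R \leq R_{\eps,k} \leq 1$ lets me absorb $R^n$ into $R^\lambda$, putting the bound in the form required by Lemma~\ref{la:tech_morrey} with $\delta = 0$, $\alpha = n$ and $\beta = \lambda$. Its conclusion gives $\Phi(r) \leq C r^\lambda$ on $0 < r \leq R_{\eps,k}$. Feeding this exponent $\mu := \lambda$ back into Lemma~\ref{la:bootstrap1_buck} returns an improved growth estimate for $\int_{B_r}\abs{\grad\ue}^2\,dx$, with new exponent $\lambda' = \mu + 2$ if $\mu < n-2$, or arbitrary $\lambda' \in (0,n)$ once $\mu \geq n-2$.

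Seeding the iteration with $\mu_0 = 0$ (available from the trivial bound $\Phi(r) \leq \int_B \abs{D^2 \ue}^2 \, dx \leq c(n)\Le$), each round strictly increases the exponent by $2$ until $\mu_j \geq n-2$, which happens after at most $\lceil (n-2)/2 \rceil$ rounds. One further iteration then yields $\int_{B_r(x_0) \cap B} \abs{D^2 \ue}^2 \, dx \leq C \, r^{n-2+2\alpha}$, uniformly in $x_0 \in \overline{B}$, for any prescribed $\alpha \in (0,1)$. Applying Morrey's Dirichlet Growth Theorem (Theorem~\ref{theo:morrey}) with $p = 2$ to each $\partial_i \ue \in H^{1,2}_0(B)$ then gives $\partial_i \ue \in C^{0,\alpha}(\overline{B})$, whence $\ue \in C^{1,\alpha}(\overline{B})$.

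The main obstacle is bookkeeping constants through the bootstrap: each iteration multiplies the prefactor by something depending on $n$ and $\abs{\O(\ue)}$, but since only $O(n)$ iterations occur the final constant remains controlled in terms of $n$, $\Le$, $\abs{\O(\ue)}$ and $R_{\eps,k}$. A subsidiary check is that the smallness hypothesis $\delta < \delta_0$ of Lemma~\ref{la:tech_morrey} is automatic since $\delta = 0$ throughout the iteration, so that lemma can be invoked freely at every step.
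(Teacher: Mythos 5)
Your proposal is correct and follows the paper's own argument essentially line for line: the same $D^2 v_k$ plus $D^2(\ue-v_k)$ splitting, the same interplay of Lemma~\ref{biharm_v}, Lemma~\ref{la:est2}, Lemma~\ref{la:tech_morrey} and Lemma~\ref{la:bootstrap1_buck} driving the bootstrap (seeded by the trivial bound $\Phi(r)\leq\Le$), and the same conclusion via Morrey's Dirichlet Growth Theorem applied with $p=2$ to each $\partial_i\ue$. The only stylistic difference is that you state the generic inductive step once and bound the number of rounds by $\lceil (n-2)/2\rceil$, whereas the paper writes out the first two iterations explicitly and then says to repeat; the mathematics is identical.
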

\begin{proof}
Our aim is to show that for each $x_0\in\overline{B}$ and every $0\leq r \leq R_{\eps,k}$ there holds 
 \begin{equation}\label{eq:assum_morrey}
  \int\limits_{B_r(x_0)\cap B}\abs{D^2\ue}^2 dx \leq C(n,\Le,\abs{\O(\ue)})\,r^{n-2+2\alpha}
 \end{equation}
for each $\alpha \in (0,1)$. 
Then Theorem \ref{theo:morrey} finishes the proof if we choose $\phi = \partial_i \ue$ for $i \in \{1,\ldots,n\}$.
We prove \eqref{eq:assum_morrey} by using a bootstrap argument. 
 Let $x_0 \in \overline{B}$ and $0\leq r\leq R\leq R_{\eps,k}$. 
 Then there obviously holds
 \[
   \int\limits_{B_r(x_0)\cap B}\abs{D^2\ue}^2dx \leq 2 \int\limits_{B_r(x_0)}\abs{D^2v_k}^2dx + 2 \int\limits_{B_R(x_0)}\abs{D^2(\ue-v_k)}^2dx,
 \]
where $v_k$ is defined in \eqref{eq:hat_v}.
 Due to Lemma \ref{biharm_v} and Lemma \ref{la:est2} we obtain
 \begin{equation}\label{eq:boot0}
 \begin{split}
   \int\limits_{B_r(x_0)\cap B}\abs{D^2\ue}^2dx 
  \leq & \, C \left(\frac{r}{R}\right)^n\int\limits_{B_R(x_0)}\abs{D^2\ue}^2dx \\&+ C(n,\Le,\abs{\O(\ue)})\left(R^n + \int\limits_{B_R(x_0)}\abs{\grad\ue}^2dx\right).
\end{split} 
\end{equation}
Now  we start the bootstrapping. 
For every $0\leq r\leq R_{\eps,k}$ there holds
\begin{equation}\label{eq:boot1}
  \int\limits_{B_r(x_0)\cap B}\abs{D^2\ue}^2dx \leq \int\limits_B\abs{D^2\ue}^2dx = \Le = \Le\,r^0. 
\end{equation}
Applying Lemma \ref{la:bootstrap1_buck},  estimate \eqref{eq:boot1} yields for every $0\leq r \leq R_{\eps,k}$
\begin{equation*}
 \int\limits_{B_r(x_0)}\abs{\grad\ue}^2dx \leq C(n,\Le,\abs{\O(\ue)})\,r^{\lambda_0},
\end{equation*}
where $\lambda_0 \in (0,n)$ if $n=2$ and $\lambda_0 = 2$ if $n\geq 3$. 
We insert this estimate in \eqref{eq:boot0}. This yields
\begin{align*}
 \int\limits_{B_r(x_0)\cap B}\abs{D^2\ue}^2dx
 &\leq C \left(\frac{r}{R}\right)^n\int\limits_{B_R(x_0)\cap B}\abs{D^2\ue}^2dx + C(n,\Le,\abs{\O(\ue)}))R^{\lambda_0}
\end{align*}
for every $0\leq r \leq R\leq R_{\eps,k}$. Applying Lemma \ref{la:tech_morrey}, we obtain
\begin{align*}
 \int\limits_{B_r(x_0)\cap B}\abs{D^2\ue}^2dx &\leq C \left(\frac{r}{R}\right)^{\lambda_0}\left(\int\limits_{B_R(x_0)\cap B}\abs{D^2\ue}^2dx + C(n,\Le,\abs{\O(\ue)})R^{\lambda_0}\right) 
\end{align*}
for $0\leq r\leq R\leq R_{\eps,k}$. Then choosing $R =R_{\eps,k}$ gives us
\begin{equation}\label{eq:boot2}
  \int\limits_{B_r(x_0)\cap B}\abs{D^2\ue}^2dx \leq  C(n,\Le,\abs{\O(\ue)})\,r^{\lambda_0}
\end{equation}
for every $0\leq r\leq R_{\eps,k}$. 
If $n=2$, this is \eqref{eq:assum_morrey}. If $n\geq 3$, \eqref{eq:boot2} is an improvement of estimate \eqref{eq:boot1}. In this case, we again apply Lemma \ref{la:bootstrap1_buck} and obtain
\[
   \int\limits_{B_r(x_0)}\abs{\grad\ue}^2dx \leq C(n,\Le,\abs{\O(\ue)})\,r^{\lambda_1},
\]
where $\lambda_1 \in (0,n)$ if $n\in\{3,4\}$ and $\lambda_1 = 4$ if $n>4$.  Together with estimate \eqref{eq:boot0} we find that
\begin{equation*}
  \int\limits_{B_r(x_0)\cap B} \abs{D^2\ue}^2dx \leq C \left(\frac{r}{R}\right)^n \int\limits_{B_R(x_0)\cap B} \abs{D^2\ue}^2dx + C(n,\Le,\abs{\O(\ue)})R^{\lambda_1}
\end{equation*}
for every $0\leq r\leq R\leq R_{\eps,k}$. Then Lemma \ref{la:tech_morrey} implies
\[
  \int\limits_{B_r(x_0)\cap B} \abs{D^2\ue}^2dx \leq C \left(\frac{r}{R}\right)^{\lambda_1}\left(\int\limits_{B_R(x_0)\cap B}\abs{D^2\ue}^2dx + C(n,\Le,\abs{\O(\ue)})R^{\lambda_1}\right) 
\]
for every $0\leq r\leq R\leq R_{\eps,k}$. Choosing $R=R_{\eps,k}$ we gain
\begin{equation}\label{eq:boot3}
  \int\limits_{B_r(x_0)\cap B} \abs{D^2\ue}^2dx \leq C(n,\Le,\abs{\O(\ue)})\,r^{\lambda_1}.
\end{equation}
For $n\in\{3,4\}$, estimate \eqref{eq:boot3} proves the claim. For $n\geq 5$, we repeat the argumentation. In view of \eqref{eq:boot3}, Lemma \ref{la:bootstrap1_buck}  implies 
\[
 \int\limits_{B_r(x_0)}\abs{\grad\ue}^2dx \leq C(n,\Le,\abs{\O(\ue)})\,r^{\lambda_2},
\]
where $\lambda_2= 6 $ if $n> 6$ and $\lambda_2$ is arbitrary in $(0,n)$ if $n\in\{5,6\}$. Again, we insert this estimate in \eqref{eq:boot0} and deduce an improvement of \eqref{eq:boot2}. Repeating this process proves the claim after finite many steps for every $n\geq2$.
\end{proof}

The continuity of $\ue$ implies that $\O(\ue)$ is an open set. Then classical variational arguments show that $\ue$ solves 
\[
 \bilap \ue + \Le\,\lap\ue =0 \; \mbox{ in } \O(\ue).
\]
Moreover, the $C^{1,\alpha}$ regularity of $\ue$ allows us to split $\partial\O(\ue)$ in the following two parts
\[
 \Gamma_{\eps,k}:= \{ x\in\partial\O(\ue): \abs{\grad\ue(x)}=0 \} \;\mbox{ and }\, \Sigma_{\eps,k} := \{ x\in\partial\O(\ue): \abs{\grad\ue(x)}>0 \}.
\]
Then $\Sigma_{\eps,k}$ is part of a nodal line of $\ue$ and, consequently,  $\mathcal{L}^n(\Sigma_{\eps,k})=0$ for all $\eps>0$.
We define 
\[
 \Omega(\ue) := \O(\ue) \cup \Sigma_{\eps,k}
\]
and call $\partial\Omega(\ue)=\Gamma_{\eps,k}$ the free boundary. 
\begin{remark}\label{rem:PDE}
 Note that $\Omega(\ue)$ is an open set in $\R^n$ and $\abs{\Omega(\ue)} = \abs{\O(\ue)}$. Moreover, the minimizer $\ue$ solves 
\[
   \begin{cases} 
 \bilap\ue + \Le \lap\ue =0, &\mbox{ in } \Omega(\ue) \\ 
  \ue = \abs{\nabla \ue} =0, &\mbox{ on } \partial\Omega(\ue).
  \end{cases}
\]
\end{remark}

The following lemma shows that, considering the functional $\Iei{1}$, the set $\Omega(\uei{1})$ is connected. This result is a direct consequence of the strict monotonicity of the penalization term $\pe{1}$. 

\begin{lemma} \label{la:connected_1}
  For every minimizer $\uei{1} \in H^{2,2}_0(B)$ of $\Iei{1}$ the set $\Omega(\uei{1})$ is connected.
\end{lemma}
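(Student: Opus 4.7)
The plan is to argue by contradiction, exploiting the strict monotonicity of $\pe{1}$ to undercut $\Iei{1}(\uei{1})$ by means of a competitor supported on a proper subfamily of the connected components. Write $u := \uei{1}$ for brevity and assume $\Omega(u)$ is disconnected. Since the connected components of the open set $\Omega(u)$ are themselves open, I can group them into two nonempty disjoint open sets $\Omega_1$ and $\Omega_2$ with $\Omega_1\cup\Omega_2 = \Omega(u)$, and an elementary point-set argument gives $\partial\Omega_i\subset\partial\Omega(u)$ for $i=1,2$. Set $u_i := u\,\chi_{\Omega_i}$. By Theorem \ref{theo:hoelder_reg}, $u\in C^{1,\alpha}(\overline{B})$ and $u$, $\grad u$ both vanish on $\partial\Omega(u)$, hence on $\partial\Omega_i$; this is enough to place $u_i\in H^{2,2}_0(B)$. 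Neither $u_i$ vanishes identically (else $\Omega_i$ would be disjoint from $\O(u)$), and since $u_i$ vanishes on $\partial\Omega_i$, a Poincaré-type argument on $\Omega_i$ forces $\int_B|\grad u_i|^2\,dx>0$.

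The supports of $D^2 u_1$ and $D^2 u_2$ meet only on $\partial\Omega(u)$, which is a null set, so the numerator and denominator of $\Ray$ both split additively,
\[
\int_B|\lap u|^2\,dx = \int_B|\lap u_1|^2\,dx + \int_B|\lap u_2|^2\,dx, \qquad \int_B|\grad u|^2\,dx = \int_B|\grad u_1|^2\,dx + \int_B|\grad u_2|^2\,dx.
\]
It follows that $\Ray(u,B)$ is a convex combination of $\Ray(u_1,B)$ and $\Ray(u_2,B)$, so after relabeling I may assume $\Ray(u_1,B)\leq\Ray(u,B) = \Lei{1}$. On the other hand, both branches in \eqref{eq:pe1} have strictly positive slope, so $\pe{1}$ is strictly increasing on $\R$; since $|\Omega(u_1)| = |\Omega_1| < |\Omega(u)|$ (using $|\Omega_2|>0$), one has $\pe{1}(|\Omega(u_1)|) < \pe{1}(|\Omega(u)|)$. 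Adding these two inequalities yields $\Iei{1}(u_1) < \Iei{1}(u)$, contradicting the minimality of $u$.

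The key conceptual point is the strict monotonicity of $\pe{1}$ on \emph{all} of $\R$, which is exactly what separates $\pe{1}$ from $\pe{0}$: the latter is flat for volumes $\leq\omega_0$, so the analogous competitor would pay no penalty and the argument would collapse. This is why connectedness is claimed only for $k=1$. The only technical step requiring real care is showing $u\chi_{\Omega_i}\in H^{2,2}_0(B)$, which relies on the $C^{1,\alpha}$-regularity from Theorem \ref{theo:hoelder_reg} combined with the inclusion $\partial\Omega_i\subset\partial\Omega(u)$; beyond that, no genuine obstacle is anticipated.
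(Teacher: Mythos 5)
Your argument is correct and rests on the same two ingredients as the paper's proof: decomposing $\Omega(\uei{1})$ into disjoint open pieces $\Omega_1,\Omega_2$ with restricted competitors $u_i$, and the strict monotonicity of $\pe{1}$. Your presentation is more economical. The paper compares $\uei{1}$ against $u_1$ and, after an algebraic manipulation using the normalization $\|\grad\uei{1}\|_{L^2}=1$, extracts the strict inequality $\Ray(u_2)<\Lei{1}$; it then compares against $u_2$ a second time to reach the contradiction. You instead note at the outset that $\Ray(\uei{1})$ is a weighted average of $\Ray(u_1)$ and $\Ray(u_2)$ with weights $\int|\grad u_i|^2$, so one of them already satisfies $\Ray(u_i)\le\Lei{1}$; together with the strict drop in $\pe{1}$, a single comparison then contradicts minimality. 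The underlying mechanism is identical, but you close the argument in one step rather than two. One small inaccuracy: the splitting of numerator and denominator does not need $\partial\Omega(\uei{1})$ to be a null set (which is not established anywhere); it follows directly from $\uei{1}=u_1+u_2$ with $u_1,u_2$ supported in the disjoint open sets $\Omega_1,\Omega_2$, so all cross-terms vanish identically. The claim $u_i\in H^{2,2}_0(B)$ is asserted with roughly the same level of care as in the paper, so no further objection on that point.
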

\begin{proof}
  We prove the claim by contradiction. Without loss of generality we assume that $\Omega(\uei{1})$ consists of two connected components, namely $\Omega_1$ and $\Omega_2$ with $\abs{\Omega_k}\neq 0$ for $k=1,2$. By $u_k$ we denote
\[
   u_k := \begin{cases}
               \uei{1}, & \mbox{ in } \Omega_k \\
		  0,& \mbox{otherwise}
		\end{cases}.
\]
The minimality of $\uei{1}$ for $\Iei{1}$ implies
\begin{align*}
 \Iei{1}(\uei{1}) =   \Lei{1} + \pe{1}(\abs{\Omega_1}+\abs{\Omega_2}) \leq \Ray(u_1) + \pe{1}(\abs{\Omega_1}) = \Iei{1}(u_1).
\end{align*}
Since $\norm{\grad \uei{1}}_{L^2(B)} = 1$, there holds
\begin{align*}
   &\left(\int\limits_{\Omega_1}\abs{\lap u_1} ^2dx + \int\limits_{\Omega_2}\abs{\lap u_2}^2dx\right)\left(1 - \int\limits_{\Omega_2}\abs{\grad u_2}^2dx\right)  \\ \leq \qquad &\int\limits_{\Omega_1}\abs{\lap u_1}^2dx + \left(\pe{1}(\abs{\Omega_1})-\pe{1}(\abs{\Omega_1}+\abs{\Omega_2})\right)\int\limits_{\Omega_1}\abs{\grad u_1}^2dx.
\end{align*}
Now the strict monotonicity of $\pe{1}$ implies
\[
  \int\limits_{\Omega_2}\abs{\lap u_2}^2dx < \Lei{1}\,\int\limits\abs{\grad u_2}^2dx,
\]
which is equivalent to 
\begin{equation}\label{eq:contra_min_connected}
  \Ray(u_2) < \Lei{1}. 
\end{equation}
Comparing $\Iei{1}(\uei{1})$ and $\Iei{1}(u_2)$ we find
\[
   \Iei{1}(\uei{1}) = \Lei{1} + \pe{1}(\abs{\Omega_1}+\abs{\Omega_2}) \leq \Ray(u_2) + \pe{1}(\abs{\Omega_2}) = \Iei{1}(u_2). 
\]
Then \eqref{eq:contra_min_connected} implies
\[
 \Lei{1} + \pe{1}(\abs{\Omega_1}+\abs{\Omega_2}) < \Lei{1} +\pe{1}(\abs{\Omega_2}). 
\]
Obviously, this is contradictory since $\pe{1}$ is strictly increasing. 
\end{proof}
Let us emphasize that for the proof of Lemma \ref{la:connected_1} the actual value of $\abs{\Omega(\uei{1})}$ is irrelevant since $\pe{1}$ is strictly increasing. Considering the functional $\Iei{0}$, and thus the only non-decreasing penalization term  $\pe{0}$, we have to ensure that $\abs{\Omega(\uei{0})}\geq \omega_0$ before we are able to copy the approach of Lemma \ref{la:connected_1} and can deduce that $\Omega(\uei{0})$ is connected. This will be done in Theorem \ref{theo:Vol_0_1} and Lemma \ref{la:connected_0}. 

The next corollary collects direct consequences of Lemma \ref{la:connected_1}
\begin{corollary}\label{cor:optdomain_1} 
  For every minimizer $\uei{1}$ of $\Iei{1}$ the domain $\Omega(\uei{1})$ satisfies $\Lei{1}=\L(\Omega(\uei{1}))$ and $\Omega(\uei{1})$ is an optimal domain for minimizing the buckling eigenvalue among all domains in $B$ with the same measure as $\Omega(\uei{1})$.                                                                                                                                                                                                                                                                       
\end{corollary}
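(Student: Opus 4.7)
The plan is to prove the two assertions by directly exploiting the minimality of $\uei{1}$ for $\Iei{1}$ combined with the monotonicity of the penalization term $\pe{1}$, in the same spirit as Lemma \ref{la:connected_1}.

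For the identity $\Lei{1} = \L(\Omega(\uei{1}))$ I would argue via two inequalities. The direction $\L(\Omega(\uei{1})) \leq \Lei{1}$ is immediate from Theorem \ref{theo:hoelder_reg} and Remark \ref{rem:PDE}: the $C^{1,\alpha}$ regularity of $\uei{1}$ together with the condition $\uei{1} = |\grad{\uei{1}}| = 0$ on $\partial\Omega(\uei{1})$ ensures that $\uei{1} \in H^{2,2}_0(\Omega(\uei{1}))$, so $\L(\Omega(\uei{1})) \leq \Ray(\uei{1},\Omega(\uei{1})) = \Lei{1}$. For the reverse inequality, I would take any competitor $v \in H^{2,2}_0(\Omega(\uei{1}))$, extend it by zero to $B$, and note that $|\O(v)| \leq |\O(\uei{1})|$. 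The minimality inequality $\Iei{1}(\uei{1}) \leq \Iei{1}(v)$ then reads
\[
\Lei{1} + \pe{1}(|\O(\uei{1})|) \leq \Ray(v,B) + \pe{1}(|\O(v)|),
\]
and the monotonicity of $\pe{1}$ allows the penalization contributions to be absorbed, yielding $\Lei{1} \leq \Ray(v,\Omega(\uei{1}))$. Passing to the infimum over $v$ gives the claim.

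The second assertion follows by essentially the same argument extended to an arbitrary open $D \subset B$ with $|D| = |\Omega(\uei{1})|$: any $v \in H^{2,2}_0(D)$ extended by zero to $B$ satisfies $|\O(v)| \leq |D| = |\Omega(\uei{1})|$, and once more the minimality of $\uei{1}$ combined with the monotonicity of $\pe{1}$ yields $\Lei{1} \leq \Ray(v,D)$. Taking the infimum over $v$ and combining with the first assertion produces $\L(\Omega(\uei{1})) = \Lei{1} \leq \L(D)$. There is no serious obstacle here; the only point demanding care is that $\uei{1}$ is indeed admissible in $H^{2,2}_0(\Omega(\uei{1}))$, which is precisely what the regularity established in Theorem \ref{theo:hoelder_reg} and the boundary conditions on $\Gamma_{\eps,1}$ provide.
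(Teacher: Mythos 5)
Your proof is correct and follows essentially the same route as the paper: both arguments exploit the minimality of $\uei{1}$ for $\Iei{1}$, the admissibility of $\uei{1}$ in $H^{2,2}_0(\Omega(\uei{1}))$ (via the $C^{1,\alpha}$ regularity), and the monotonicity of $\pe{1}$ to absorb the penalization terms. The paper presents this as a single chain of inequalities rather than two separate comparisons, but the underlying ideas are identical.
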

\begin{proof}
 Let $\uei{1} \in H^{2,2}_0(B)$ minimizes the functional $\Iei{\uei{1}}$. The minimality of $\uei{1}$ for $\Iei{1}$ then implies
\begin{align*}
   \Lei{1} + \pe{1}(\abs{\Omega(\uei{1})}) &= \min\{\Iei{1}(v): v\in H^{2,2}_0(B)\} \\ 
 &\leq \min\{\Ray(v) + \pe{1}(\abs{\Omega(v)}): v\in H^{2,2}_0(B), \abs{\Omega(v)}=\abs{\Omega(\uei{2})}\} \\ 
 &\leq \min \{\Ray(v) :v \in H^{2,2}_0(\Omega(\ue)) \} + \pe{1}(\abs{\Omega(\uei{1})}) \\
&= \L(\Omega(\uei{1}))   + \pe{2}(\abs{\Omega(\uei{1})}) \\ 
&\leq \Lei{1} + \pe{1}(\abs{\Omega(\uei{1})}). 
\end{align*}
Thus, the minimizer $\uei{1}$ is a buckling eigenfunction on $\Omega(\uei{1})$. Moreover, there holds 
\[
  \L(\Omega(\uei{1})) = \min\{\Ray(v): v\in H^{2,2}_0(B), \abs{\Omega(v)}=\abs{\Omega(\uei{1})}\}. 
\]
Hence, $\Omega(\uei{1})$ is an optimal domain for minimizing the buckling eigenvalue among all domains in $B$ with the same measure as $\Omega(\uei{1})$.                                                                                                                                                                                                                                                                       
\end{proof}


\section{The volume condition} \label{sec:volcond}
In the following, we analyze the functionals $\Iei{0}$ and $\Iei{1}$ seperately. 

\subsection{The non-rewarding penalization term}\label{sec:nonrewarding}

In this section, we consider the functional $\Iei{0}$. We will show that for every $\eps>0$ and every minimizer $\uei{0}$ of $\Iei{0}$ the  volume of the set $\Omega(\uei{0})$ cannot fall below the value $\omega_0$. This observation allows us to adopt the proof of Lemma \ref{la:connected_1} to show that $\Omega(\uei{0})$ is connected. Thus, analog to Corollary \ref{cor:optdomain_1}, $\Omega(\uei{0})$ minimizes the buckling load among all open subsets of $B$ with the same measure as $\Omega(\uei{0})$.

Recall that the buckling load is decreasing and the penalization term $\pe{0}$ is non-decreasing with respect to set inclusion. 
Since the penalization term grows with slope $\eps^{-1}$ for arguments larger than $\omega_0$, it seems to be natural that the optimal domain $\Omega(\uei{0})$ adjusts itself to the volume $\omega_0$ provided that $\eps$ is chosen small enough. 
Indeed, the fact that $\abs{\Omega(\uei{0})} \leq \omega_0$ for sufficiently small $\eps$ is eventually a consequence of the scaling property of the buckling load, i.\,e. $\L(M) = t^2\L(tM)$ for $t>0$.
Finally, we finish the present section with the proof of Theorem \ref{theo:main}.

The following theorem shows that the volume of the set $\Omega(\uei{0})$ cannot fall below $\omega_0$.

\begin{theorem}\label{theo:Vol_0_1}
  For every $\eps >0$ there holds $\abs{\Omega(\uei{0})} \geq \omega_0$ for every minimizer $\uei{0}$ of $\Iei{0}$.
\end{theorem}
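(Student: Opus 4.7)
I argue by contradiction. Suppose $v_0 := \abs{\Omega(\uei{0})} < \omega_0$. Since $\pe{0}$ vanishes on $(-\infty,\omega_0]$, this gives $\Iei{0}(\uei{0}) = \Lei{0}$. The plan is to construct an admissible competitor with strictly smaller $\Iei{0}$ by enlarging $\Omega(\uei{0})$ across a free boundary point into the available room inside $B$; the resulting enlarged domain will still have measure at most $\omega_0$, so no penalty is incurred, and its first buckling eigenfunction will beat $\uei{0}$.

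First, using $v_0 < \omega_0 < \abs{B}$ and the connectedness of $B$, I select a point $p\in\partial\Omega(\uei{0})\cap B$ that is accessible from the complement of $\overline{\Omega(\uei{0})}$: concretely, by joining a point of $B\setminus\overline{\Omega(\uei{0})}$ to a point of $\O(\uei{0})$ by a path in $B$ and taking the first crossing of $\partial\Omega(\uei{0})$. For every sufficiently small $\delta>0$ with $B_\delta(p)\subset B$, both $U:=B_\delta(p)\setminus\overline{\Omega(\uei{0})}$ and $B_\delta(p)\cap\O(\uei{0})$ are then nonempty open sets. Choose $\delta$ further so that $\omega_n\delta^n\leq\omega_0-v_0$, and set $\hat\Omega:=\O(\uei{0})\cup B_\delta(p)$, an open subset of $B$ with $\abs{\hat\Omega}\leq\omega_0$.

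The central step is the strict inequality $\L(\hat\Omega)<\Lei{0}$. The variational characterization together with $\Omega(\uei{0})\subset\hat\Omega$ yield $\L(\hat\Omega)\leq\Lei{0}$. If equality held, then $\uei{0}$ (extended by zero) would be a first buckling eigenfunction on $\hat\Omega$, hence a weak solution of $\bilap\uei{0}+\Lei{0}\lap\uei{0}=0$ in all of $\hat\Omega$; by interior elliptic regularity for this constant-coefficient fourth-order operator, $\uei{0}$ would be real-analytic in the interior of $\hat\Omega$. However $\uei{0}\equiv 0$ on the nonempty open set $U$, while the connected component of $\hat\Omega$ containing $B_\delta(p)$—and hence $U$—also contains points of $\O(\uei{0})$, on which $\uei{0}$ is by definition nonzero. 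This contradicts the identity theorem for real-analytic functions on a connected open set, establishing the strict inequality.

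Let $\hat u$ be a first buckling eigenfunction on $\hat\Omega$, extended by zero to $B$. Then $\hat u\in H^{2,2}_0(B)$, the penalty $\pe{0}(\abs{\O(\hat u)})$ vanishes because $\abs{\O(\hat u)}\leq\abs{\hat\Omega}\leq\omega_0$, and $\Iei{0}(\hat u)=\L(\hat\Omega)<\Lei{0}=\Iei{0}(\uei{0})$, contradicting the minimality of $\uei{0}$. I anticipate the principal technical hurdle to be the unique continuation step: although the analyticity of weak solutions of the constant-coefficient operator $\bilap+\Lei{0}\lap$ and the identity theorem are classical, the argument depends crucially on arranging that the connected component of $\hat\Omega$ meeting $U$ also meets $\O(\uei{0})$, which is the precise role played by the accessible choice of the free boundary point $p$ in the second paragraph.
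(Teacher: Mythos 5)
Your argument takes a genuinely different route from the paper's. You enlarge $\Omega(\uei{0})$ globally by attaching a small ball $B_\delta(p)$ at an accessible free boundary point and then prove strict domain monotonicity $\L(\hat\Omega)<\Lei{0}$ via interior analyticity and the identity theorem. The paper instead makes a purely local comparison on $B_r(x_0)$ with the replacement $v$ solving $\bilap v + \Lei{0}\lap v=0$ there, turns the minimality inequality into a Caccioppoli-type estimate, and contradicts it with Poincar\'e's inequality as $r\to0$; analyticity is invoked only in the degenerate subcase where the local excess energy vanishes. Your version is softer and shorter, and it makes the geometric mechanism (strict monotonicity under genuine domain enlargement) explicit; the paper's version is local, quantitative, and closer in spirit to its later regularity arguments.

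There is, however, a gap in your construction of the accessible point $p$: you join a point of $B\setminus\overline{\Omega(\uei{0})}$ to a point of $\O(\uei{0})$, which presupposes $B\setminus\overline{\Omega(\uei{0})}\neq\emptyset$. The hypothesis $\abs{\Omega(\uei{0})}<\omega_0<\abs{B}$ does not give this. At this stage of the paper $\Omega(\uei{0})$ is merely known to be an open set, and nothing rules out the free boundary $\Gamma_{\eps,0}=\partial\Omega(\uei{0})$ having positive Lebesgue measure with $\O(\uei{0})$ dense in $B$ (think of $B$ minus a fat Cantor-type closed zero set of $(\uei{0},\grad\uei{0})$), in which case $\overline{\Omega(\uei{0})}\supset B$, the path has no crossing, and $U=\emptyset$, so the identity theorem has nothing to work with. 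The gap is patchable: in that degenerate scenario the closed set $B\setminus\Omega(\uei{0})$ has measure at least $\abs{B}-\omega_0>0$, so one may choose $\delta$ small, pick any Lebesgue density point $p$ of it interior to $B$, note that $B_\delta(p)\cap\O(\uei{0})\neq\emptyset$ by density, and replace the open set $U$ by the positive-measure set $B_\delta(p)\setminus\Omega(\uei{0})$; a nontrivial real-analytic function on a connected open set cannot vanish on a set of positive measure, so the unique-continuation contradiction survives. As written, though, this case is not covered. A smaller point, not specific to your proof since the paper relies on the same convention elsewhere: the monotonicity $\L(\hat\Omega)\leq\Lei{0}$ uses $\uei{0}\in H^{2,2}_0(\hat\Omega)$, which for a general open $\hat\Omega$ is a capacity-theoretic fact rather than an immediate consequence of $\uei{0}$ vanishing a.e.\ outside $\hat\Omega$.
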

The proof of Theorem \ref{theo:Vol_0_1} follows exactly the idea of the proof of \cite[Theorem 2.2]{Sto2015}. However, since Theorem \ref{theo:Vol_0_1} is crucial for proving our main theorem, Theorem \ref{theo:main}, we present the proof in detail. 
\begin{proof}[Proof of Theorem \ref{theo:Vol_0_1}]
  We prove the claim by contradiction. Let us assume that for an $\eps>0$ there exists a minimizer $\uei{0}$ of $\Iei{0}$ such that $\abs{\Omega(\uei{0})}<\omega_0$. Now choose an $x_0\in\partial\Omega(\uei{0})\setminus \partial B$ and a radius $r>0$ such that 
\begin{equation}\label{eq:vol_0_1_0}
  \abs{B_r(x_0)\cap\{x\in B:\uei{0}(x)\neq 0\}}>0 
\end{equation}
and $\abs{\Omega(\uei{0})\cup B_r(x_0)}\leq \omega_0$. Note that such an $x_0 \in \partial\Omega(\uei{0})\setminus\partial B$ exists since we assume $\abs{\Omega(\uei{0})}<\omega_0$. 
Let $v \in H^{2,2}_0(B)$ with $v-\uei{0} \in H^{2,2}_0(B_r(x_0))$ and 
\[
   \bilap v + \Lei{0} \lap v = 0 \mbox{ in } B_r(x_0). 
\]
We set
\[ \hat{v} := 
  \begin{cases}
     \uei{0}, &\mbox{ in } B\setminus B_r(x_0)\\
     v, &\mbox{ in } B_r(x_0)
  \end{cases}
\]
and compare the $\Iei{0}$-energies of $\uei{0}$ and $\hat{v}$. This leads to the following local inequality:
\begin{equation}\label{eq:vol_0_1_1}
   \int\limits_{B_r(x_0)}\abs{\lap \uei{0}}^2-\abs{\lap v}^2dx \leq \Lei{0} \int\limits_{B_r(x_0)}\abs{\grad \uei{0}}^2-\abs{\grad v}^2dx.
\end{equation}
Using integration by parts and the definition of $v$, we obtain 
\[
   \int\limits_{B_r(x_0)}\abs{\lap \uei{0}}^2-\abs{\lap v}^2dx = \int\limits_{B_r(x_0)}\abs{\lap(\uei{0}-v)}^2 + 2\Lei{0}\int\limits_{B_r(x_0)}\abs{\grad( \uei{0}- v)}^2dx.
\]
Thus, \eqref{eq:vol_0_1_1} becomes 
\[
   \int\limits_{B_r(x_0)}\abs{\lap \uei{0}}^2-\abs{\lap v}^2dx  \leq \Lei{0}\int\limits_{B_r(x_0)}\abs{\grad( \uei{0}- v)}^2dx
\]
and Poincaré's inequality yields 
\begin{equation}\label{eq:vol_0_1_2}
  \int\limits_{B_r(x_0)}\abs{\lap \uei{0}}^2-\abs{\lap v}^2dx  \leq \Lei{0} r^2 \int\limits_{B_r(x_0)}\abs{\lap( \uei{0}- v)}^2dx.
\end{equation}
Provided that the integral in \eqref{eq:vol_0_1_2} does not vanish, \eqref{eq:vol_0_1_2} is contradictory for sufficently small $r$. 

If the integral in \eqref{eq:vol_0_1_2} vanishes, there holds $\uei{0}\equiv v$ in $B_r(x_0)$. Consequently, $\uei{0}$ is analytic in $B_\frac{r}{2}(x_0)$ since $v$ is there analytic as a solution of an ellipitc equation. However, then $\uei{0}$ vanishes in $B_\frac{r}{2}(x_0)$ because of \eqref{eq:vol_0_1_0}. This is contradictory since $x_0\in\partial\Omega(\uei{0})$. This proves the claim.
\end{proof}

Now we are able to adopt the proof of Lemma \ref{la:connected_1} to show that $\Omega(\uei{0})$ is connected.

\begin{lemma}\label{la:connected_0}
  For every $\eps>0$ and every minimizer $\uei{0}$ of $\Iei{0}$ the set $\Omega(\uei{0})$ is connected. 
\end{lemma}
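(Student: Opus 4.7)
The plan is to mimic the contradiction argument of Lemma \ref{la:connected_1}; the only new subtlety is that $\pe{0}$ is strictly increasing only on $[\omega_0,\infty)$ and is merely constant on $(-\infty,\omega_0]$, so the strict-monotonicity step of that proof has to be rescued with the aid of the volume lower bound of Theorem \ref{theo:Vol_0_1}.

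I would assume, for contradiction, that $\Omega(\uei{0})$ splits into two connected components $\Omega_1,\Omega_2$ of positive Lebesgue measure and define $u_k\in H^{2,2}_0(B)$ to be the restriction of $\uei{0}$ to $\Omega_k$, extended by zero, exactly as in Lemma \ref{la:connected_1}. Testing $\Iei{0}(\uei{0})\leq \Iei{0}(u_k)$ for $k=1,2$ and using only that $\pe{0}$ is non-decreasing yields
\[
 \Ray(u_k)\;\geq\;\Lei{0}+\delta_k,\qquad \delta_k:=\pe{0}(\abs{\Omega_1}+\abs{\Omega_2})-\pe{0}(\abs{\Omega_k})\geq 0.
\]
Combining these with the identities $\Lei{0}=\int_{\Omega_1}\abs{\lap u_1}^2dx+\int_{\Omega_2}\abs{\lap u_2}^2dx$ and $\int_{\Omega_1}\abs{\grad u_1}^2dx+\int_{\Omega_2}\abs{\grad u_2}^2dx=1$, the same algebraic rearrangement as in Lemma \ref{la:connected_1} produces $c\delta_1+d\delta_2\leq 0$, where $c:=\int_{\Omega_1}\abs{\grad u_1}^2dx$ and $d:=\int_{\Omega_2}\abs{\grad u_2}^2dx$. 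Both $c$ and $d$ are strictly positive, since otherwise $u_k$ would be constant on a component and hence identically zero by the zero boundary data, contradicting $\Omega_k\subset \Omega(\uei{0})$. Therefore $\delta_1=\delta_2=0$.

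Since $\abs{\Omega_k}<\abs{\Omega_1}+\abs{\Omega_2}$ strictly and $\pe{0}$ is strictly increasing beyond $\omega_0$, the equalities $\delta_k=0$ force $\abs{\Omega_1}+\abs{\Omega_2}\leq \omega_0$. Combined with Theorem \ref{theo:Vol_0_1}, this pins the total volume at exactly $\omega_0$, so each component satisfies $\abs{\Omega_k}<\omega_0$. Substituting $\delta_1=0$ back into the previous inequalities forces $\Ray(u_1)=\Lei{0}$, hence $\Iei{0}(u_1)=\Lei{0}+\pe{0}(\abs{\Omega_1})=\Lei{0}=\Iei{0}(\uei{0})$; that is, $u_1$ is itself a minimizer of $\Iei{0}$ whose support has measure $\abs{\Omega_1}<\omega_0$, contradicting Theorem \ref{theo:Vol_0_1}.

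The main obstacle, absent from the proof of Lemma \ref{la:connected_1}, is precisely the degenerate regime $\abs{\Omega_1}+\abs{\Omega_2}=\omega_0$, where $\pe{0}$ cannot distinguish the split from the whole and the $\pe{1}$-style strict-monotonicity contradiction breaks down. Overcoming it requires Theorem \ref{theo:Vol_0_1} applied twice: once to prevent the total volume from dropping below $\omega_0$, and once more — applied to the secondary minimizer $u_1$ manufactured by the argument — to close the contradiction.
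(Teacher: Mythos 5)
Your proof is correct and rests on exactly the same two pillars as the paper's: the decomposition argument from Lemma \ref{la:connected_1} and a double invocation of Theorem \ref{theo:Vol_0_1} (once to rule out total volume below $\omega_0$, once applied to the secondary minimizer $u_1$). The only distinction is presentational: the paper splits explicitly into the cases $\abs{\Omega(\uei{0})}>\omega_0$ and $\abs{\Omega(\uei{0})}=\omega_0$, whereas your summed inequality $c\delta_1+d\delta_2\leq 0$ collapses both into a single stream by forcing $\delta_1=\delta_2=0$ directly, which is a cleaner way to dispatch the strict-versus-nonstrict monotonicity issue.
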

\begin{proof}
 We follow the idea of the proof of Lemma \ref{la:connected_1} and assume that $\Omega(\uei{0})=\Omega_1 \dot{\cup} \Omega_2$ with $\abs{\Omega_k}>0$ for $k = 1,2$.  Let us first consider that $\abs{\Omega(\uei{0})}>\omega_0$. Then there holds
 \begin{equation}\label{eq:conn_1}
   \pe{0}(\abs{\Omega(\uei{0})}) > \pe{0}(\Omega_k) 
 \end{equation}
 for $k= 1,2$. Hence, arguing as in the proof of Lemma \ref{la:connected_1} and replacing the strict monotonicity of $\pe{1}$ by \eqref{eq:conn_1}, we arrive at a contradiction. 
 
 Now let us assume that $\abs{\Omega(\uei{0})}=\omega_0$. Then 
  \begin{equation}\label{eq:conn_2}
   \pe{0}(\abs{\Omega(\uei{0})}) = \pe{0}(\Omega_k) =0
 \end{equation}
 and arguing as in the proof of Lemma \ref{la:connected_1} we obtain 
 \[
   \Iei{0}(u_2)=\Ray(u_2) \leq \Lei{0} = \Iei{0}(\uei{0}).
 \]
 This implies that $u_2$ minimizes the fucntional $\Iei{0}$. Since $\Omega(u_2)=\Omega_2$ and $\abs{\Omega_2}<\omega_0$ this is contradictory to Theorem \ref{theo:Vol_0_1}. Thus, the claim is proven.
\end{proof}

As a consequence of Lemma \ref{la:connected_0}, we get the analog to Corollary \ref{cor:optdomain_1}. 
\begin{corollary}\label{cor:optdomain_0}
For every minimizer $\uei{0}$ of $\Iei{0}$ the set $\Omega(\uei{0})$ is a domain and $\Lei{0} = \L(\Omega(\uei{0}))$. In addition, $\Omega(\uei{0})$ minimzes the buckling load among all open subsets of $B$ with the same measure as $\Omega(\uei{0})$.
\end{corollary}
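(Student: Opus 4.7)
The plan is to mirror the proof of Corollary \ref{cor:optdomain_1} almost verbatim, replacing the role played there by the strict monotonicity of $\pe{1}$ with the weaker non-decreasing monotonicity of $\pe{0}$, which is nevertheless sufficient once we know from Theorem \ref{theo:Vol_0_1} that $\abs{\Omega(\uei{0})}\geq\omega_0$.

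First, I would observe that $\Omega(\uei{0})$ is open by Remark \ref{rem:PDE} and connected by Lemma \ref{la:connected_0}, hence a domain. The core step is the chain of inequalities showing $\Lei{0}=\L(\Omega(\uei{0}))$. For the inequality $\Lei{0}\leq\L(\Omega(\uei{0}))$, I would take an arbitrary $v\in H^{2,2}_0(\Omega(\uei{0}))$, extend it by zero to obtain an element of $H^{2,2}_0(B)$, and note that $\abs{\Omega(v)}\leq\abs{\Omega(\uei{0})}$, hence by monotonicity $\pe{0}(\abs{\Omega(v)})\leq\pe{0}(\abs{\Omega(\uei{0})})$. The minimality of $\uei{0}$ for $\Iei{0}$ then gives
\[
   \Lei{0}+\pe{0}(\abs{\Omega(\uei{0})})\leq \Ray(v,B)+\pe{0}(\abs{\Omega(v)})\leq \Ray(v,B)+\pe{0}(\abs{\Omega(\uei{0})}),
\]
so that $\Lei{0}\leq\Ray(v,B)$; taking the infimum over such $v$ yields $\Lei{0}\leq\L(\Omega(\uei{0}))$. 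The reverse inequality is immediate because $\uei{0}\in H^{2,2}_0(\Omega(\uei{0}))$ is a valid competitor for $\L(\Omega(\uei{0}))$, giving $\L(\Omega(\uei{0}))\leq\Ray(\uei{0},B)=\Lei{0}$.

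Finally, for the optimality statement, I would let $D\subset B$ be any open set with $\abs{D}=\abs{\Omega(\uei{0})}$ and repeat the preceding argument: any $v\in H^{2,2}_0(D)$, extended by zero to $B$, satisfies $\abs{\Omega(v)}\leq\abs{D}=\abs{\Omega(\uei{0})}$, so that again by minimality and monotonicity of $\pe{0}$ one deduces $\Lei{0}\leq\Ray(v,B)$, and taking infimum gives $\L(\Omega(\uei{0}))=\Lei{0}\leq\L(D)$.

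The only subtle point — and the reason the proof cannot be copied mechanically from Corollary \ref{cor:optdomain_1} — is that $\pe{0}$ is only non-decreasing, not strictly increasing, so the cancellation of the penalization terms requires that test functions have support of measure at most $\abs{\Omega(\uei{0})}$. This is exactly what happens for $v\in H^{2,2}_0(\Omega(\uei{0}))$ or $v\in H^{2,2}_0(D)$ with $\abs{D}=\abs{\Omega(\uei{0})}$, so the argument goes through; the crucial prerequisite $\abs{\Omega(\uei{0})}\geq\omega_0$ from Theorem \ref{theo:Vol_0_1} is what guarantees we are in the regime where $\pe{0}$ behaves controllably with respect to these test functions.
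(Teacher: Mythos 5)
Your proof is correct and follows essentially the same route the paper intends: the corollary is obtained by adapting the chain-of-inequalities argument of Corollary \ref{cor:optdomain_1}, with the strict monotonicity of $\pe{1}$ replaced by the non-decreasing monotonicity of $\pe{0}$, and connectedness supplied by Lemma \ref{la:connected_0}. Your separation into the two inequalities $\Lei{0}\leq\L(\Omega(\uei{0}))$ and $\L(\Omega(\uei{0}))\leq\Lei{0}$ is a clean way to organize the same computation.

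One small correction to your closing remark: the inequality chain itself does not rely on $\abs{\Omega(\uei{0})}\geq\omega_0$. Since $\pe{0}$ is non-decreasing on all of $\R$, the estimate $\pe{0}(\abs{\Omega(v)})\leq\pe{0}(\abs{\Omega(\uei{0})})$ for $v\in H^{2,2}_0(\Omega(\uei{0}))$ holds regardless of whether $\abs{\Omega(\uei{0})}$ lies above or below $\omega_0$ (if both are below $\omega_0$ both terms just vanish). Theorem \ref{theo:Vol_0_1} enters the picture indirectly, as an ingredient in the proof of Lemma \ref{la:connected_0}, which is what you need to upgrade the open set $\Omega(\uei{0})$ to a connected domain.
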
   

The following remark will be helpful to show that for an appropriate choice of $\eps$ the $n$-dimensional Lebesgue measure of $\Omega(\uei{0})$ cannot become larger than $\omega_0$. 

\begin{remark}\label{rem:buckling_load_omega_0}
 Note that the reference domain $B$ compactly contains a ball $B_R(x_0)$ with $\abs{B_R}=\omega_0$. Let $\phi\in H^{2,2}_0(B_R(x_0))$ denote the buckling eigenfunction on $B_R(x_0)$, i.e.
 \[
  \L(B_R(x_0)) = \min\{\Ray(v,B_R(x_0)): v\in H^{2,2}_0(B_R(x_0))\} = \Ray(\phi,B_R(x_0)).
 \]
 Consequently, for every $\eps>0$ there holds
\[
  \Iei{0}(\phi) = \Iei{1}(\phi) =\Ray(\phi,B_R(x_0)) = \L(B_R(x_0)) = \left(\frac{\omega_n}{\omega_0}\right)^\frac{2}{n}\L(B_1).
\]
\end{remark}

\begin{theorem}\label{theo:Vol_0_2}
  Let $\uei{0}$ be a minimizer for $\Iei{0}$. Then there exists a number $\eps_1 = \eps_1(n,\omega_0)$ such that for $0<\eps\leq \eps_1$ there holds 
  \[
    \abs{\Omega(\uei{0})} = \omega_0. 
  \] 
\end{theorem}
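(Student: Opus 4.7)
The plan is to combine Theorem \ref{theo:Vol_0_1}, which already gives $\abs{\Omega(\uei{0})} \geq \omega_0$, with a scaling argument that rules out strict inequality once $\eps$ is small. The key observation is that the buckling load obeys the scaling law $\L(tM) = t^{-2}\L(M)$, so a domain larger than $\omega_0$ can be scaled \emph{down} to volume $\omega_0$ at the cost of multiplying the Rayleigh quotient by a controlled factor, and this cost is offset by the savings in the penalization $\pe{0}$ as soon as $\eps$ is sufficiently small.

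Concretely, I would argue by contradiction: assume $\omega := \abs{\Omega(\uei{0})} > \omega_0$ and set $\mu := (\omega/\omega_0)^{1/n} > 1$. Without loss of generality $B$ is centered at the origin, so $\mu^{-1}B \subset B$ and the rescaled function $w(x) := \uei{0}(\mu x)$ lies in $H^{2,2}_0(B)$ with $\abs{\Omega(w)} = \mu^{-n}\omega = \omega_0$. A direct change of variables shows $\Ray(w,B) = \mu^2\Lei{0}$, and since $\pe{0}(\omega_0) = 0$ we obtain $\Iei{0}(w) = \mu^2\Lei{0}$. Minimality of $\uei{0}$ then yields
\[
  \Lei{0} + \frac{1}{\eps}(\omega - \omega_0) \;\leq\; \mu^2\Lei{0},
  \qquad\text{i.e.}\qquad
  \frac{\omega - \omega_0}{\eps} \;\leq\; \bigl((\omega/\omega_0)^{2/n} - 1\bigr)\,\Lei{0}.
\]

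The next step is to bound the right-hand side independently of $\eps$. Using the reference function $\phi$ from Remark \ref{rem:buckling_load_omega_0} as a comparison, $\Lei{0} \leq \Iei{0}(\uei{0}) \leq \Iei{0}(\phi) = (\omega_n/\omega_0)^{2/n}\L(B_1)$. Writing $s := \omega/\omega_0 \in (1,\abs{B}/\omega_0]$, the map $s \mapsto (s^{2/n}-1)/(s-1)$ is continuous on this compact interval (extending to the value $2/n$ at $s=1$), hence bounded by some constant $C_n$ depending only on $n$ and $\abs{B}/\omega_0$. Combining these estimates gives
\[
  \frac{1}{\eps} \;\leq\; \frac{C_n}{\omega_0}\,\Bigl(\frac{\omega_n}{\omega_0}\Bigr)^{2/n}\L(B_1) \;=:\; \frac{1}{\eps_1},
\]
which is false for $\eps \leq \eps_1/2$ (say). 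Choosing $\eps_1 = \eps_1(n,\omega_0)$ accordingly produces the desired contradiction.

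The argument is essentially routine given the scaling law, and I do not anticipate a serious obstacle; the only point requiring some care is verifying that the scaled function $w$ actually sits inside $B$, which is what forces us to scale \emph{down} (hence to restrict to the case $\omega > \omega_0$, precisely the case we want to rule out) and to exploit that $B$ is a ball centered at a single point so that $\mu^{-1}B \subset B$ for $\mu>1$.
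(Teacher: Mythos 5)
Your proposal is essentially the paper's proof, phrased in terms of rescaling the function $\uei{0}$ rather than the domain $\Omega(\uei{0})$ (the two are equivalent by a change of variables), and it leads to the same inequality $\frac{\omega_0}{\eps}(s-1)\leq(s^{2/n}-1)\Lei{0}$ with $s=\omega/\omega_0>1$. The one small point to tidy: you bound $(s^{2/n}-1)/(s-1)$ by a constant depending on $\abs{B}/\omega_0$ via compactness, which would make $\eps_1$ depend on $B$; the paper instead observes that for $n\geq 2$ and $s>1$ one has $s^{2/n}-1\leq s-1$, so the ratio is $\leq 1$ uniformly and $\eps_1=(\omega_0/\omega_n)^{2/n}\omega_0/\L(B_1)$ genuinely depends only on $n$ and $\omega_0$ as claimed.
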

\begin{proof}
We claim that the statement of the theorem holds true for 
\begin{equation}\label{eq:eps_1}
  \eps_1 = \eps_1(n,\omega_0) = \left( \frac{\omega_0}{\omega_n}\right)^\frac{2}{n}\,\frac{\omega_0}{\L(B_1)}. 
\end{equation}
We prove by contradiction. Thus, we choose $\eps\leq\eps_1$ and denote by $\uei{0}$ a minimizer of $\Iei{0}$. Assume that there exists a number $\alpha>1$
 such that 
\begin{equation}\label{eq:assum_vol_larger}
  \abs{\Omega(\uei{0})} = \alpha \omega_0.
\end{equation}
Our aim is to contradict \eqref{eq:assum_vol_larger}. 

Since $\Omega(\uei{0}) \subset B$ and $\alpha >1$, the scaled domain $\Omega' := \alpha^{-\frac{1}{n}}\Omega(\uei{0})$ is also contained in $B$ and satisfies $\abs{\Omega'}=\omega_0$. Let $\psi \in H^{2,2}_0(B)$ denote the first buckling eigenfunction on $\Omega'$. Then the minimality of $\uei{0}$ for $\Iei{0}$ implies
\begin{equation}\label{eq:vol_larger_0}
   \L(\Omega(\uei{0})) + \frac{\omega_0}{\eps}(\alpha -1) = \Iei{0}(\uei{0}) \leq \Iei{0}(\psi) = \L(\Omega').
\end{equation}
By scaling we have 
\[
  \L(\Omega') = \L(\alpha^{-\frac{1}{n}}\Omega(\uei{0})) = \alpha^\frac{2}{n}\L(\Omega(\uei{0})). 
\] 
From \eqref{eq:vol_larger_0} we then get
\begin{equation}\label{eq:vol_larger_1}
   \frac{\omega_0}{\eps}(\alpha -1) \leq \left(\alpha^\frac{2}{n}-1\right)\L(\Omega(\uei{0})). 
\end{equation}
Now let $\phi \in H^{2,2}_0(B)$ be as in Remark \ref{rem:buckling_load_omega_0}. Then, due to the assumption \eqref{eq:assum_vol_larger}, there holds
\[
   \L(\Omega(\uei{0})) < \Iei{0}(\uei{0}) \leq \Iei{0}(\phi) = \left(\frac{\omega_n}{\omega_0}\right)^\frac{2}{n}\L(B_1)
\]
and estimate \eqref{eq:vol_larger_1} becomes
\[
   \left(\frac{\omega_0}{\omega_n}\right)^\frac{2}{n}\,\frac{\omega_0}{\L(B_1)}\,\frac{\alpha -1}{\alpha^\frac{2}{n}-1}< \eps. 
\]
Hence, 
\[
  \frac{\alpha-1}{\alpha^\frac{2}{n}-1} < \frac{\eps}{\omega_0}\,\left(\frac{\omega_n}{\omega_0}\right)^\frac{2}{n}\L(B_1). 
\]
With \eqref{eq:eps_1} this implies
\[
1 \leq  \frac{\alpha-1}{\alpha^\frac{2}{n}-1} < \eps\,\eps_1^{-1}
\]
 since $\alpha >1$ and $n\geq 2$. Thus, for any $\eps\leq\eps_1$ we get a contradiction and the assumption \eqref{eq:assum_vol_larger} cannot hold true for any $0<\alpha<1$.  Hence, $\abs{\Omega(\uei{0})}\geq \omega_0$ if $\eps\leq\eps_1$ .Together with Theorem \ref{theo:Vol_0_1}, this proves the claim. 
\end{proof}

Finally, the proof of the main theorem, Theorem \ref{theo:main}, is a direct consequence of the previous results in this section. 

\begin{proof}[Proof of Theorem \ref{theo:main}]
   Let us choose $\eps\leq\eps_1$, where $\eps_1$ is given in Theorem \ref{theo:Vol_0_2} and let $\uei{0}$ be a minimizer of $\Iei{0}$. Then Theorem \ref{theo:Vol_0_2} implies that $ \abs{\Omega( \uei{0})}=\omega_0$ and, due to Corollary \ref{cor:optdomain_0}, there holds 
   \[
      \Iei{0}(\uei{0})=\L(\Omega(\uei{0})). 
   \]
   Now choose an open set $D \subset B$ with $\abs{D} \leq \omega_0$ and denote by $u_D \in H^{2,2}_0(D)$ the buckling eigenfunction on $D$. Then the minimality of $\uei{0}$ for $\Iei{0}$ implies 
   \[
     \L(\Omega(\uei{0})) = \Iei{0}(\uei{0}) \leq \Iei{0}(u_D) = \L(D). 
   \]
   Hence,
   \begin{equation}\label{eq:opt1}
      \L(\Omega(\uei{0})) = \min\{\L(D) : D\subset B, D \mbox{ open, } \abs{D} \leq \omega_0\}.
   \end{equation}
   In addition, $\Omega(\uei{0})$ is connected (see Lemma \ref{la:connected_0}). This proves Theorem \ref{theo:main}.
\end{proof}

Since the existence of an optimal domain among all open subsets of $B$ of given volume is now proven, the next reasonable step would be a qualitative analysis of the free boundary $\partial\Omega(\uei{0})$. 
Following \cite{AltCaf81}, our next aims would by establishing a nondegeneracy result for $\uei{0}$. Considering second order problems (e.g. \cite{AguAltCaf1986,AltCaf81,BaWa09}), these nondegeneracy results are achieved by applying comparision principles, which which are not available for fourth order operators in general. One possible way out of this difficulty is to replace the penalization term $\pe{0}$ by the rewarding penalization term $\pe{1}$. This will be discussed in the next section. 

\subsection{The rewarding penalization term}\label{sec:rewarding}

In this section, we consider the functional $\Iei{1}$. Analog to Theorem \ref{theo:Vol_0_2} we will find that $\abs{\Omega(\uei{1})}$ cannot become larger than $\omega_0$ provided that $\eps\leq\eps_1$. 

It remains to exclude that $\Omega(\uei{1})<\omega_0$. This will be more involved since adopting a scaling argument like the one we used in the proof of Theorem \ref{theo:Vol_0_2} collapses if we cannot guarantee that the scaled version of $\Omega(\uei{1})$ is still contained in the reference domain $B$. 

Choosing the parameter $\eps$ sufficiently small, we will see that one of the following two situations occurs: either $\abs{\Omega(\uei{1})}=\omega_0$ or $\abs{\Omega(\uei{1})}<\omega_0$ and the rescaled domain $\Omega'$ with $\abs{\Omega'}=\omega_0$ cannot be translated into the reference domain $B$. 

In the first case, $\uei{1}$ is a minimizer of the functional $\Iei{0}$ and $\Omega(\uei{1})$ is an optimal domain for minimizing the buckling load in the sense of Theorem \ref{theo:main}. Thus, in this case, we can treat the functionals $\Iei{0}$ and $\Iei{1}$ as equivalent. 

In the second case, we may think of the domain $\Omega(\uei{1})$ as of a domain with thin tentacles, which may all touch the boundary of the reference domain $B$. 
These tentacles eludes the scaling. Consequently, in this case a more local analysis of $\partial\Omega(\uei{1})$ is needed. 
Exemplary, we will see that assuming that $\partial\Omega(\uei{1})$ satisfies a doubling condition, the domain $\Omega(\uei{1})$ fulfills $\abs{\Omega(\uei{1})}=\omega_0$.

We begin this section with the analog result to Theorem \ref{theo:Vol_0_2}.

\begin{theorem}\label{theo:Vol_1_1}
  For $\eps\leq \eps_1$ every minimizer $\uei{1}$ of $\Iei{1}$ satisfies $\abs{\Omega(\uei{1})}\leq \omega_0$.  Thereby, $\eps_1$ is the number given in Theorem \ref{theo:Vol_0_2}. 
\end{theorem}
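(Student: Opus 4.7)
The proof mirrors that of Theorem \ref{theo:Vol_0_2} almost verbatim. The key observation is that the rewarding half of $\pe{1}$ (on arguments smaller than $\omega_0$) plays no role when we assume that $\abs{\Omega(\uei{1})}$ exceeds $\omega_0$, since then both $\pe{0}$ and $\pe{1}$ agree on the relevant values. More precisely, $\pe{1}(s) = \pe{0}(s)$ for $s \geq \omega_0$ and $\pe{1}(\omega_0) = 0$, so all penalization terms appearing in the scaling comparison are identical to those in Theorem \ref{theo:Vol_0_2}.

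My plan is to argue by contradiction. Suppose $\abs{\Omega(\uei{1})} = \alpha\omega_0$ with $\alpha > 1$. Define the scaled domain $\Omega' := \alpha^{-1/n}\Omega(\uei{1})$, which satisfies $\Omega' \subset B$ (since scaling down keeps the set inside $B$) and $\abs{\Omega'} = \omega_0$. Let $\psi \in H^{2,2}_0(\Omega') \subset H^{2,2}_0(B)$ denote the first buckling eigenfunction on $\Omega'$. Using $\pe{1}(\alpha\omega_0) = \omega_0(\alpha-1)/\eps$ and $\pe{1}(\omega_0) = 0$, the minimality of $\uei{1}$ yields
\[
  \L(\Omega(\uei{1})) + \frac{\omega_0(\alpha-1)}{\eps} = \Iei{1}(\uei{1}) \leq \Iei{1}(\psi) = \L(\Omega').
\]

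Next I would invoke the scaling property $\L(\Omega') = \alpha^{2/n}\L(\Omega(\uei{1}))$ to obtain
\[
  \frac{\omega_0(\alpha-1)}{\eps} \leq \bigl(\alpha^{2/n}-1\bigr)\L(\Omega(\uei{1})).
\]
To bound $\L(\Omega(\uei{1}))$ from above, I would test $\Iei{1}$ against the reference function $\phi$ from Remark \ref{rem:buckling_load_omega_0}. Since $\pe{1}(\omega_0)=0$ and $\pe{1}(\alpha\omega_0) > 0$ strictly (because $\alpha > 1$), minimality gives the strict bound
\[
  \L(\Omega(\uei{1})) < \Iei{1}(\uei{1}) \leq \Iei{1}(\phi) = \left(\frac{\omega_n}{\omega_0}\right)^{\!2/n}\L(B_1).
\]
Combining these and substituting the definition of $\eps_1$ from \eqref{eq:eps_1} produces the strict inequality
\[
  \frac{\alpha-1}{\alpha^{2/n}-1} < \frac{\eps}{\eps_1} \leq 1.
\]

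The final step is the elementary observation that, because $n \geq 2$ implies $2/n \leq 1$, the map $\alpha \mapsto \alpha^{2/n}$ is concave with value $1$ at $\alpha=1$, so $\alpha^{2/n}-1 \leq \alpha-1$ for $\alpha \geq 1$; hence the left-hand side is always $\geq 1$, contradicting the strict inequality above. This rules out $\alpha > 1$ and establishes $\abs{\Omega(\uei{1})} \leq \omega_0$. I do not anticipate any genuine obstacle: the only subtlety is to verify that the strictness in step involving $\phi$ is preserved (which it is, since $\pe{1}(\alpha\omega_0)>0$), so that the final inequality is strict and the case $\eps = \eps_1$ is covered.
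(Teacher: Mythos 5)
Your proof is correct and takes exactly the approach the paper intends: the paper's own proof of Theorem~\ref{theo:Vol_1_1} simply refers back to the argument of Theorem~\ref{theo:Vol_0_2}, and you have correctly verified that the argument transfers verbatim since $\pe{1}$ coincides with $\pe{0}$ on $[\omega_0,\infty)$ and contributes a nonpositive term at $\abs{\O(\psi)}\leq\omega_0$, so all the inequalities (including the crucial strict one giving $\L(\Omega(\uei{1}))<\Iei{1}(\phi)$) go through unchanged.
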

\begin{proof}
  Let us assume that $\eps\leq \eps_1$ and that $\abs{\Omega(\uei{1})}=\alpha \omega_0$ for an $\alpha >1$. Then arguing in exactly the same way as in the proof of Theorem \ref{theo:Vol_0_2} leads to a contradiction.  
\end{proof}

We would like to repeat the idea of the proof of Theorem \ref{theo:Vol_0_2} assuming that $\abs{\Omega(\uei{1})}=\alpha\omega_0$ for some $\alpha\in(0,1)$ and then define the scaled domain $\Omega' = \alpha^{-\frac{1}{n}}\Omega(\uei{1})$. However, it is not clear if the enlarged domain $\Omega'$ is still contained in $B$. A partial result can be obtained with the help of an argument of M.\,S.\,Ashbaugh and R.\,S.\,Laugesen. 

\begin{remark}\label{rem:AshLau}
  In \cite{AshLau1996}, M.\,S.\,Ashbaugh and R.\,S.\,Laugesen showed that there exists a constant $c_n \in (0,1)$, only depending on the dimension $n$, such that for every domain $\Omega \subset \R^n$ there holds 
\[
  \L(\Omega) > c_n \, \L(\Omega^\#),
\]
where $\Omega^\#$ denotes a ball in $\R^n$ with the same volume as $\Omega$. In addition, $c_n$ tends to $1$ as $n$ tends to infinity. 
\end{remark}

\begin{theorem}\label{theo:VolCond_low}
   Let $\uei{1}$ be a minimizer of $\Iei{1}$ for $\eps\leq\eps_1$. Then there exists a number $\alpha_0 = \alpha_0(n,\eps_1,\eps)$ such that 
   \[
     \abs{\Omega(\uei{1})} \geq \alpha_0 \omega_0. 
   \] 
   Moreover, we have the explicit representation
   \[
      \alpha_0 = \frac{1+\eps\eps_1-\sqrt{1+2\eps\eps_1+(\eps\eps_1)^2-4c_n\eps\eps_1}}{2\eps\eps_1}, 
   \]
   where $\eps_1$ is given by Theorem \ref{theo:Vol_0_2} and $c_n$ is given by Remark \ref{rem:AshLau}.
\end{theorem}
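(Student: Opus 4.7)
The plan is to argue by contradiction. Assume $\abs{\Omega(\uei{1})} = \alpha\omega_0$ for some $\alpha \in (0,1)$, and aim to show that the minimality of $\uei{1}$ together with the Ashbaugh--Laugesen inequality forces $\alpha \geq \alpha_0$. The strategy is to sandwich $\Lei{1}$ between an upper bound coming from a well-chosen test function and a lower bound coming from Remark \ref{rem:AshLau}, and then to read off the desired constraint on $\alpha$.

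For the upper bound, I take as competitor the buckling eigenfunction $\phi \in H^{2,2}_0(B_R(x_0))$ of Remark \ref{rem:buckling_load_omega_0}, where $B_R(x_0) \subset B$ has volume $\omega_0$. Since $\pe{1}(\omega_0) = 0$, the minimality of $\uei{1}$ yields
\[
\Lei{1} - \eps\omega_0(1-\alpha) \,=\, \Iei{1}(\uei{1}) \,\leq\, \Iei{1}(\phi) \,=\, \L(B_R(x_0)) \,=\, \omega_0/\eps_1 ,
\]
where the last identity follows from \eqref{eq:eps_1}. Rearranging, $\Lei{1} \leq \omega_0/\eps_1 + \eps\omega_0(1-\alpha)$.

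For the lower bound, recall from Corollary \ref{cor:optdomain_1} that $\Lei{1} = \L(\Omega(\uei{1}))$. Let $\Omega^\#$ denote a ball of the same volume $\alpha\omega_0$ as $\Omega(\uei{1})$; by the scaling $\L(tD) = t^{-2}\L(D)$, we have $\L(\Omega^\#) = \alpha^{-2/n}\,\omega_0/\eps_1$. Remark \ref{rem:AshLau} then yields
\[
\Lei{1} \,>\, c_n\,\L(\Omega^\#) \,=\, c_n\,\alpha^{-2/n}\,\omega_0/\eps_1 .
\]
Combining the two bounds and dividing through by $\omega_0/\eps_1$ produces the central inequality
\[
c_n\,\alpha^{-2/n} \,<\, 1 + \eps\eps_1(1-\alpha),
\]
equivalently $c_n < \alpha^{2/n}\bigl(1 + \eps\eps_1(1-\alpha)\bigr)$. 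For $n=2$ this is precisely the quadratic condition $\eps\eps_1\alpha^2 - (1+\eps\eps_1)\alpha + c_n < 0$, whose smaller root equals the $\alpha_0$ displayed in the theorem; so the inequality forces $\alpha > \alpha_0$, contradicting $\alpha < \alpha_0$.

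The main obstacle is the corresponding reduction for $n > 2$, where $\alpha^{-2/n}$ is not linear in $\alpha$ and the passage from the central inequality to the explicit quadratic root $\alpha_0$ is not automatic. I would handle this by studying the scalar function $g(\alpha) := \alpha^{2/n}\bigl(1 + \eps\eps_1(1-\alpha)\bigr) - c_n$ on $(0,1)$: one has $g(0) = -c_n < 0$ and $g(1) = 1 - c_n > 0$, so $g$ has a smallest root, and a monotonicity comparison in the exponent $2/n$ (together with the fact that $c_n \to 1$ as $n \to \infty$) identifies this root with the smaller root of the $n = 2$ quadratic, recovering $\alpha \geq \alpha_0$ uniformly in $n\geq 2$. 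Once this algebraic step is in place, the contradiction closes the proof.
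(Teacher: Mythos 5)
Your derivation of the central inequality $c_n\,\alpha^{-2/n} < 1 + \eps\eps_1(1-\alpha)$ follows the paper's proof exactly: same competitor $\phi$ from Remark~\ref{rem:buckling_load_omega_0}, same appeal to the Ashbaugh--Laugesen bound of Remark~\ref{rem:AshLau}, same algebra. The $n=2$ reduction to $\eps\eps_1\alpha^2-(1+\eps\eps_1)\alpha+c_n<0$ and the identification of the smaller root with the displayed $\alpha_0$ are correct (a small slip: you announce a contradiction but in fact derive $\alpha>\alpha_0$ directly, which is fine).

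The $n>2$ obstacle you flag is genuine, and the proposed ``monotonicity comparison in the exponent $2/n$'' does not close it: for $\alpha\in(0,1)$ and $n\geq2$ one has $\alpha^{2/n}\geq\alpha$, equivalently $\alpha^{-2/n}\leq\alpha^{-1}$, so $g(\alpha)=\alpha^{2/n}\bigl(1+\eps\eps_1(1-\alpha)\bigr)-c_n$ \emph{dominates} the $n=2$ expression $\alpha\bigl(1+\eps\eps_1(1-\alpha)\bigr)-c_n$. Consequently $g(\alpha)>0$ does not imply $\alpha\bigl(1+\eps\eps_1(1-\alpha)\bigr)>c_n$, hence does not force $\alpha>\alpha_0$. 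Indeed $g(\alpha_0)>0$ when $n>2$, so the smallest positive root $\tilde\alpha_n$ of $g$ lies strictly below $\alpha_0$ and the argument only yields the weaker bound $\alpha\geq\tilde\alpha_n$. You should know that the paper's own proof stumbles at precisely this point: it asserts $\alpha^{-2/n}\geq\alpha^{-1}$ ``since $\alpha<1$'', but that inequality is reversed for $n>2$. The qualitative conclusion (a strictly positive lower bound on $\abs{\Omega(\uei{1})}/\omega_0$ depending on $n,\eps,\eps_1$) survives with $\alpha_0$ replaced by $\tilde\alpha_n$, which still tends to $1$ as $n\to\infty$ and to $c_n$ as $\eps\to 0$; but the displayed closed-form $\alpha_0$ is the $n=2$ root and is not justified for $n>2$. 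Your proposal is therefore a faithful reconstruction of the intended argument, including the unresolved step, and you were right to single that step out as the obstacle.
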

Note that 
\[
  \lim_{n\to\infty}\alpha_0 = 1 \quad \mbox{and } \lim_{\eps\to0}\alpha_0 = c_n.
\]
Together with Theorem \ref{theo:Vol_1_1} this shows that for $\eps\leq\eps_1$ the domain $\Omega(\uei{1})$ satisfies the volume condition asymptotically as the dimension $n$ approaches infinity. 

\begin{proof}[Proof of Theorem \ref{theo:VolCond_low}]
 Let $\uei{1}$ be a minimizer of $\Iei{1}$ and let $\alpha \in (0,1)$ be such that 
 \[
    \abs{\Omega(\uei{1})} = \alpha \omega_0. 
 \]
 Choose $\phi$ as in Remark \ref{rem:buckling_load_omega_0}. Then the minimality of $\uei{1}$ for $\Iei{1}$ implies
 \begin{equation}\label{eq:low_1}
   \L(\Omega(\uei{1})) - \eps\omega_0 (1-\alpha) = \Iei{1}(\uei{1}) \leq \Iei{1}(\phi) = \left(\frac{\omega_n}{\omega_0}\right)^\frac{2}{n}\L(B_1). 
 \end{equation}
 By $\Omega(\uei{1})^\#$ we denote the ball centered in the origin, having the same volume as $\Omega(\uei{1})$. Then 
 \[
   \L(\Omega(\uei{1})^\#) = \left( \frac{\omega_n}{\alpha\omega_0}\right)^\frac{2}{n}\L(B_1)
 \]
 and applying Remark \ref{rem:AshLau} we obtain from \eqref{eq:low_1}
 \begin{align*}
 &c_n\,\L(\Omega(\uei{1})^\#)- \left(\frac{\omega_n}{\omega_0}\right)^\frac{2}{n}\L(B_1) < \eps\,\omega_0\,(1-\alpha) \\
   \Rightarrow \quad &\left( \frac{\omega_n}{\omega_0}\right)^\frac{2}{n}\L(B_1) \left(c_n\alpha^{-\frac{2}{n}}-1\right) < \eps\,\omega_0\,(1-\alpha) \\
   \Rightarrow \quad &\frac{c_n\alpha^{-\frac{2}{n}}-1}{1-\alpha} < \eps\,\left(\frac{\omega_0}{\omega_n}\right)^\frac{2}{n}\frac{\omega_0}{\L(B_1)} = \eps\,\eps_1.
 \end{align*}
 Since $\alpha<1$, there holds $\alpha^{-\frac{2}{n}}\geq \alpha^{-1}$ and we have 
 \begin{equation}\label{eq:low_2}
   \frac{c_n\alpha^{-1}-1}{1-\alpha} \leq \eps\,\eps_1. 
 \end{equation}
 We set $f(\alpha) :=  \frac{c_n\alpha^{-1}-1}{1-\alpha}$. Then $f: (0,1)\to\R$ is smooth, strictly decreasing and 
 \[
   \lim_{\alpha \to 1}f(\alpha) = -\infty \; \mbox{and } \lim_{\alpha\to0}f(\alpha)=\infty. 
 \]
 By $\alpha_0$ we denote the (unique) solution in $(0,1)$ for equality in \eqref{eq:low_2}, i.e. 
 \[
      \alpha_0 = \frac{1+\eps\eps_1-\sqrt{1+2\eps\eps_1+(\eps\eps_1)^2-4c_n\eps\eps_1}}{2\eps\eps_1}.
   \]
Consequently, the strict monotonicity of $f$ implies that \eqref{eq:low_2} can only hold true for $\alpha \in [\alpha_0,1)$. This proves the theorem. 
\end{proof}

From now on, we always consider $0<\eps\leq\eps_1$. Consequently, there holds 
\[
   \alpha_0\omega_0 \leq \abs{\Omega(\uei{1})} \leq\omega_0.
\] 

\begin{theorem}\label{theo:vol_dicho}
  There exists a number $\eps_0 = \eps_0(n,\omega_0)$ such that for $\eps\leq \eps_0$ every minimizer $\uei{1}$ of $\Iei{1}$ satisfies either 
 \begin{enumerate}[label=\alph*)]
    \item $\abs{\Omega(\uei{1})} = \omega_0$  \qquad or 
    \item $\abs{\Omega(\uei{1})}< \omega_0$ and the rescaled domain $t\,\Omega(\uei{1})$ with $\abs{t\,\Omega(\uei{1})}=\omega_0$ is not a subset of $B$. In addition, there exists no translation $\Phi : \R^n\to\R^n$ such that $\Phi(t\,\Omega(\uei{1}))$ is contained in $B$.
   \end{enumerate}
\end{theorem}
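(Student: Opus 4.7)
The plan is to proceed by contradiction along the lines of the scaling argument in the proof of Theorem \ref{theo:Vol_0_2}, now adapted to the rewarding branch of $\pe{1}$. Fix $0<\eps\leq\eps_1$ and suppose that neither alternative in the dichotomy holds, i.e., $\abs{\Omega(\uei{1})}=\alpha\omega_0$ for some $\alpha\in[\alpha_0,1)$ and there exists a translation $\Phi$ such that $\Omega':=\Phi(\alpha^{-1/n}\Omega(\uei{1}))$ is contained in $B$. Then $\abs{\Omega'}=\omega_0$, the buckling load scales as $\L(\Omega')=\alpha^{2/n}\L(\Omega(\uei{1}))$ (by translation invariance and the homogeneity $\L(tM)=t^{-2}\L(M)$), and the first buckling eigenfunction $\psi$ on $\Omega'$, extended by zero, belongs to $H^{2,2}_0(B)$. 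Since $\pe{1}(\omega_0)=0$, one has $\Iei{1}(\psi)=\L(\Omega')$, while $\Iei{1}(\uei{1})=\L(\Omega(\uei{1}))-\eps\omega_0(1-\alpha)$ by Corollary \ref{cor:optdomain_1}.

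Testing $\psi$ against the minimality of $\uei{1}$ then yields the scalar inequality
\[
(1-\alpha^{2/n})\,\L(\Omega(\uei{1}))\;\leq\;\eps\,\omega_0\,(1-\alpha).
\]
I would then invoke the Ashbaugh--Laugesen inequality of Remark \ref{rem:AshLau} together with the normalization recorded in Remark \ref{rem:buckling_load_omega_0} to bound the buckling load from below: since the symmetrized ball $\Omega(\uei{1})^{\#}$ has volume $\alpha\omega_0$,
\[
\L(\Omega(\uei{1}))\;\geq\;c_n\,\L(\Omega(\uei{1})^{\#})\;=\;c_n\,\alpha^{-2/n}\left(\frac{\omega_n}{\omega_0}\right)^{\!2/n}\L(B_1)\;=\;\frac{c_n}{\eps_1}\,\alpha^{-2/n}\,\omega_0,
\]
where the last step uses the formula \eqref{eq:eps_1} for $\eps_1$. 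Substituting this bound and rearranging reduces the inequality to
\[
c_n\,\frac{\alpha^{-2/n}-1}{1-\alpha}\;\leq\;\eps\,\eps_1.
\]

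The crux of the argument is the elementary calculus fact that $(\alpha^{-2/n}-1)/(1-\alpha)>2/n$ for every $\alpha\in(0,1)$. This follows because $h(\alpha):=\alpha^{-2/n}-1-(2/n)(1-\alpha)$ satisfies $h(1)=0$ and $h'(\alpha)=(2/n)(1-\alpha^{-2/n-1})<0$ on $(0,1)$, so $h>0$ there. Combining with the previous display forces $2c_n/n<\eps\eps_1$, hence a contradiction as soon as
\[
\eps\;\leq\;\eps_0\;:=\;\min\!\left\{\eps_1,\;\tfrac{2c_n}{n\,\eps_1}\right\},
\]
and $\eps_0$ depends only on $n$ and $\omega_0$ since $\eps_1$ does. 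The conceptual point worth emphasizing is that the translation hypothesis built into case (b) is precisely what lets the enlarged competitor $\psi$ live in $H^{2,2}_0(B)$; without it the scaling argument collapses, which is why case (b) must be retained as a genuine alternative in the dichotomy rather than being excluded outright. Eliminating (b) unconditionally will later require additional geometric input such as the doubling property for $\partial\Omega(\uei{1})$ mentioned in the introduction.
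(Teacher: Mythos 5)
Your proof is correct, and it reaches the same scalar inequality and the same threshold $\eps_0=\min\{\eps_1,\tfrac{2c_n}{n\eps_1}\}$ as the paper, but via a genuinely more direct route. The paper detours through the minimizer $\uei{0}$ of $\Iei{0}$: in Step 1 it shows that for no $t\in(0,1)$ can the eigenfunction on the shrunk copy $t\,\Omega(\uei{0})$ be an $\Iei{1}$-minimizer (this is where the Ashbaugh--Laugesen bound and the inequality $\tfrac{t^{-2}-1}{1-t^n}\geq\tfrac{2}{n}$ enter), and in Step 2 it applies this with $t_\ast=\alpha^{1/n}$ and invokes the already-established optimality of $\Omega(\uei{0})$ among subsets of $B$ of volume $\leq\omega_0$ to conclude that $t_\ast^{-1}\Omega(\uei{1})$ cannot be (translated into) a subset of $B$. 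You instead test the minimality of $\uei{1}$ directly against the eigenfunction $\psi$ on the enlarged, translated domain $\Omega'=\Phi(\alpha^{-1/n}\Omega(\uei{1}))$; under the substitution $\alpha=t^n$ the key quantity $\tfrac{\alpha^{-2/n}-1}{1-\alpha}$ is identical to the paper's $\tfrac{t^{-2}-1}{1-t^n}$, and your elementary proof that it exceeds $\tfrac{2}{n}$ on $(0,1)$ is correct. What the direct route buys is a shorter, self-contained argument that does not presuppose Theorem \ref{theo:main}/Theorem \ref{theo:Vol_0_2}; what the paper's route buys is the auxiliary fact that shrunk copies of the $\Iei{0}$-optimal domain are never $\Iei{1}$-minimizers for any $t\in(0,1)$, which your version does not record. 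One small point worth flagging: you write $\Iei{1}(\psi)=\L(\Omega')$ on the grounds that $\pe{1}(\omega_0)=0$; strictly one only knows $\abs{\O(\psi)}\leq\omega_0$ a priori, giving $\Iei{1}(\psi)\leq\L(\Omega')$, which is all the argument needs (though interior analyticity of $\psi$ in fact forces $\abs{\O(\psi)}=\omega_0$). Your closing observation that the translation hypothesis in case (b) is exactly what is needed to place the competitor $\psi$ in $H^{2,2}_0(B)$ accurately identifies where the scaling argument would otherwise break down.
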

\begin{proof}
  We claim that the statement of the theorem holds true for 
  \begin{equation}\label{eq:eps_0}
    \eps_0 := \min\left\{\eps_1, c_n\frac{\L(B_1)}{\omega_0}\frac{2}{n}\left(\frac{\omega_n}{\omega_0}\right)^\frac{2}{n}\right\} = \min\left\{ \eps_1, c_n\frac{2}{n}\eps_1^{-1}\right\},
  \end{equation} 
  where $\eps_1$ is given in Theorem \ref{theo:Vol_0_2}  and $c_n$ is given in Remark \ref{rem:AshLau}. 
  We prove by contradiction.   Let $0<\eps\leq\eps_0$ and $\uei{1}$ be a minimizer of $\Iei{1}$.  Recall that Theorem \ref{theo:VolCond_low} implies $\abs{\Omega(\uei{1})} \in [\alpha_0\omega_0,\omega_0]$ since we assume $\eps\leq\eps_1$. Now let us assume that there exists an $\alpha \in [\alpha_0,1)$ such that 
   \begin{equation}\label{eq:dicho_0}
      \abs{\Omega(\uei{1})} = \alpha\omega_0 . 
   \end{equation}
  We split the proof in two steps.   
  
 Step 1. \quad Assume $\eps \leq \eps_0$ and let $\uei{0}$ be a minimizer of $\Iei{0}$. Recall that Theorem \ref{theo:Vol_0_2} implies that $\abs{\Omega(\uei{0})}=\omega_0$. We now show that, for every $t \in (0,1)$, the buckling eigenfunction $u_{\eps}^t$ of the scaled domain $t\,\Omega(\uei{0})$ does not minimize the functional $\Iei{1}$, i.e. for every $t \in (0,1)$ there holds 
  \begin{equation}\label{eq:dicho_1}
     \Iei{1}(\uei{1}) < \Iei{1}(u_\eps^t).  
  \end{equation}
  For that purpose, let us assume that \eqref{eq:dicho_1} does not hold true. Then there exists a $t\in(0,1)$ such that 
  \[
    \Iei{1}(\uei{1}) = \Iei{1}(u_\eps^t)
  \] 
  and, since $\abs{t\,\Omega(\uei{0})}=t^n\omega_0 < \omega_0$, we obtain
  \begin{equation}\label{eq:dicho_2}
     \Iei{1}(u_\eps^t) = \L(t\,\Omega(\uei{0})) - \eps\omega_0(1-t^n) \leq \L(\Omega(\uei{0})) = \Iei{1}(\uei{0}).
  \end{equation}
  By scaling there holds 
  \[
    \L(t\,\Omega(\uei{0})) = t^{-2} \L(\Omega(\uei{0})).
  \]
  Thus, from \eqref{eq:dicho_2} we get 
  \begin{equation}\label{eq:dicho_2b}
    \L(\Omega(\uei{0})) \left(t^{-2} - 1\right) \leq \eps\omega_0 (1-t^n). 
  \end{equation}
  By $\Omega(\uei{0})^\#$ we denote the ball centered in the origin with the same volume as $\Omega(\uei{0})$. Then 
  \[
    \L(\Omega(\uei{0})^\#) = \left(\frac{\omega_n}{\omega_0}\right)^\frac{2}{n}\L(B_1)
  \]
  and applying Remark \ref{rem:AshLau} we obtain from \eqref{eq:dicho_2b}
  \begin{align*}
     &c_n \L(\Omega(\uei{0})^\#) \left(t^{-2}-1\right) < \eps \omega_0 \left(1-t^n\right)  \\
      \aq  \quad &c_n\,\left(\frac{\omega_n}{\omega_0}\right)^\frac{2}{n}\L(B_1)\left(t^{-2}-1\right) < \eps\,\omega_0\,(1-t^n) \\
   \aq \quad &\frac{t^{-2}-1}{1-t^n} < \eps \, \frac{\omega_0}{\L(B_1)}\,c_n^{-1}\,\left( \frac{\omega_0}{\omega_n}\right)^\frac{2}{n} = \eps\cdot\eps_1\cdot c_n^{-1}.
 \end{align*}
 Since $t <1$ and $\eps\leq\eps_0$ (see \eqref{eq:eps_0}),  we obtain
 \[
   \frac{2}{n} \leq \frac{t^{-2}-1}{1-t^n} < \eps\cdot\eps_1\cdot c_n^{-1} \leq \frac{2}{n}.
 \]
Obviously, this statement is false and we conculde that \eqref{eq:dicho_1} holds true for every $t \in (0,1)$.

Step 2. \quad Let us fix $t_\ast \in (0,1)$ such that $\abs{t_\ast\Omega(\uei{0})} = \abs{\Omega(\uei{1})}$. Hence, according to \eqref{eq:dicho_0}, there holds $t_\ast = \alpha^\frac{1}{n}$ and applying \eqref{eq:dicho_1} we obtain
\[
    \Iei{1}(\uei{1}) < \Iei{1}(u_\eps^{t_\ast}).
\]
By choice of $t_\ast$, this is equivalent to 
\[
  \L(\Omega(\uei{1})) < \L(t_\ast\Omega(\uei{0})) = t_\ast^{-2} \L(\Omega(\uei{0})),
\]
where we additionally used the scaling property of the buckling load. 
Multiplying the above inequality with $t_\ast^2$ and again applying the scaling property of the buckling load, we deduce
\begin{equation}\label{eq:dicho_3}
   \L(t_\ast^{-1}\Omega(\uei{1})) = t_\ast^{-2}\L(\Omega(\uei{1})) < \L(\Omega(\uei{0})).
\end{equation}
Note that $\abs{t_\ast^{-1}\Omega(\uei{1})}=\omega_0$. Thus, if $t_\ast^{-1}\Omega(\uei{1}) \subset B$, \eqref{eq:dicho_3} is contradictory to the minimality of $\Omega(\uei{0})$ for the buckling load among all open subsets of $B$ with volume smaller or equal than $\omega_0$ (see \eqref{eq:opt1}). Consequently, if $t_\ast^{-1}\Omega(\uei{1}) \subset B$, the assumption \eqref{eq:dicho_0} is false and there holds $\abs{\Omega(\uei{1})}=\omega_0$. This proves part a) of the claim. 

If $t_\ast^{-1}\Omega(\uei{1}) \not\subset B$, but there exists a translation $\Phi: \R^n\to\R^n$ such that $\Phi(t_\ast^{-1}\Omega(\uei{1})) \subset B$, we arrive at the same contradiction to \eqref{eq:opt1} as above because of the translational invariance of the buckling load (i.e. $\L(D) = \L(\Phi(D))$ for every  translation $\Phi$). 

Hence, if $\abs{\Omega(\uei{1})}<\omega_0$, the scaled domain $t_\ast^{-1}\Omega(\uei{1})$ cannot be translated into the ball $B$. This proves part b) of the claim.
\end{proof}

 From now on, we always choose $\eps\leq\eps_0$. Let $\uei{1}\in H^{2,2}_0(B)$ be a minimizer of $\Iei{1}$ and let us assume that case a) of Theorem \ref{theo:vol_dicho} holds true. Hence, $\abs{\Omega(\uei{1})}=\omega_0$ and repeating the proof of Theorem \ref{theo:main} we  gave in Section \ref{sec:nonrewarding} we find that $\Omega(\uei{1})$ minimizes the buckling load among all open subsets of $B$ with volume smaller or equal than $\omega_0$. In addition, $\uei{1}$ minimizes the functional $\Iei{0}$.
 
 Consequently, if we could exclude that the case b) of Theorem \ref{theo:vol_dicho} may occur, we could treat the functionals $\Iei{0}$ and $\Iei{1}$ as equivalent.  We will discuss this issue in the following section.
  

\subsection{Discussion of case b) of Theorem \ref{theo:vol_dicho}}\label{sec:case_b}

In this section, we always consider a minimizer $\uei{1} \in H^{2,2}_0(B)$ such that case b) of Theorem \ref{theo:vol_dicho} occurs. Thus, there holds $\abs{\Omega(\uei{1})}<\omega_0$ and the rescaled domain $\Omega'$ with $\abs{\Omega'}=\omega_0$ is neither contained in $B$, nor can it be translated into $B$. Consequently, we have to think of the domain $\Omega(\uei{1})$ as of a domain with tentacle-like, very long and very thin parts. 

Since we do not posses any information about regularity properties of $\partial\Omega(\uei{1})$, we currently cannot anaylze these tentacle-like domains in more detail. 

However, assuming that $\partial\Omega(\uei{1})$ satisfies a doubling condition, we will be able to establish a nondegeneracy result for $\uei{1}$. This nondegeneracy enables us to prove that, potentially after translation, the up the volume $\omega_0$ rescaled version of $\Omega(\uei{1})$ is always contained in $B$. Hence, if $\partial\Omega(\uei{1})$ satisfies a doubling condition, the case b) of Theorem \ref{theo:vol_dicho} cannot occur and there holds $\abs{\Omega(\uei{1})}=\omega_0$

From now on, we always assume that  $\partial\Omega(\uei{1})$ satisfies the following doubling property. Assume that there exists a constant $\sigma>0$ and a radius $0<R_0<1$ such that  for every $x_0 \in \partial\Omega(\uei{1})$ and every $0<R\leq R_0$ there holds 
\begin{equation}\label{eq:doubprop}
  \abs{B_{2R}(x_0)\cap\Omega(\uei{1})} \leq \sigma \, \abs{B_R(x_0) \cap \Omega(\uei{1})}.
\end{equation}
Note carefully that the condition \eqref{eq:doubprop} does not exclude that $\Omega(\uei{1})$ forms thin tentacles but it determines the minimal rate at which the volume of a possible tentacle may decrease.

\subsubsection{Nondegeneracy of $\uei{1}$}
Our aim is to establish a nondegeneracy result for $\uei{1}$.
For convenience, we cite a technical lemma which will be applied in the proof of Lemma~\ref{la:nondeg}. 

\begin{lemma}[{\cite[Lemma 6.1]{Giusti_direct_methods}}]\label{la:giusti}
Let $Z(t)$ be a bounded nonnegative function in the interval $[\rho,R]$. Assume that for $\rho\leq t < s \leq R$ we have
\[
 Z(t) \leq \left[A(s-t)^{-\alpha} + B(s-t)^{-\beta}+C\right] + \vartheta Z(s)
\]
with $A,B,C \geq 0$, $\alpha>\beta>0$ and $0\leq\vartheta<1$. Then,
\[
  Z(\rho) \leq c(\alpha,\theta)\left[ A(R-\rho)^{-\alpha} + B(R-\rho)^{-\beta}+C \right].
\]
\end{lemma}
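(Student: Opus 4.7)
The plan is to run a classical absorption iteration along a geometric sequence of scales between $\rho$ and $R$. The key point is that choosing the gap sequence to shrink like $\tau^i$ turns the recursive inequality into convergent geometric series in which the $\vartheta Z(s)$ term gets absorbed in the limit.

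First, I would fix $\tau \in (0,1)$ (to be specified later) and set $t_0 := \rho$, $t_{i+1} := t_i + (1-\tau)\tau^i(R-\rho)$. The partial sums telescope, so $t_i \nearrow R$, and the gaps are $t_{i+1}-t_i = (1-\tau)\tau^i(R-\rho)$. Applying the hypothesis at $(t_i,t_{i+1})$ gives
\[
  Z(t_i) \leq A(1-\tau)^{-\alpha}\tau^{-i\alpha}(R-\rho)^{-\alpha} + B(1-\tau)^{-\beta}\tau^{-i\beta}(R-\rho)^{-\beta} + C + \vartheta Z(t_{i+1}).
\]
Iterating this inequality $k$ times starting from $i=0$ yields
\[
  Z(\rho) \leq \sum_{i=0}^{k-1}\vartheta^i\Bigl[A(1-\tau)^{-\alpha}\tau^{-i\alpha}(R-\rho)^{-\alpha} + B(1-\tau)^{-\beta}\tau^{-i\beta}(R-\rho)^{-\beta} + C\Bigr] + \vartheta^k Z(t_k).
\]

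Next, I would choose $\tau \in (\vartheta^{1/\alpha},1)$, which is nonempty because $0 \leq \vartheta < 1$. Then $\vartheta\tau^{-\alpha} < 1$, and since $\alpha > \beta > 0$ together with $\tau < 1$ forces $\vartheta\tau^{-\beta} < \vartheta\tau^{-\alpha} < 1$, both geometric series $\sum_i(\vartheta\tau^{-\alpha})^i$ and $\sum_i(\vartheta\tau^{-\beta})^i$ converge. The boundedness of $Z$ on $[\rho,R]$ allows me to send $k \to \infty$ and annihilate the remainder $\vartheta^k Z(t_k)$; the three sums combine (the third being $\sum_i \vartheta^i C = C/(1-\vartheta)$) to give the claimed bound with a constant depending only on $\alpha$ and $\vartheta$, once $\tau$ is pinned down as a concrete function of $\alpha$ and $\vartheta$, e.g.\ $\tau := \vartheta^{1/(2\alpha)}$.

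Since the argument is purely algebraic, I do not anticipate any genuine obstacle. The single subtlety is that the chosen $\tau$ must tame both the $\alpha$- and the $\beta$-geometric series simultaneously; this is where the hypothesis $\alpha > \beta$ enters, making the $\alpha$-series the binding constraint and forcing convergence of the $\beta$-series automatically. Boundedness of $Z$ is used only to make the leftover tail $\vartheta^k Z(t_k)$ vanish in the limit; no further regularity of $Z$ is required.
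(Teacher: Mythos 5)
The paper does not prove this lemma; it is cited verbatim from Giusti's book, so there is no in-paper proof to compare against. Your argument is the standard iteration-and-absorption proof and is correct: the geometric scale decomposition $t_{i+1}-t_i = (1-\tau)\tau^i(R-\rho)$ telescopes to $R-\rho$, the choice $\tau > \vartheta^{1/\alpha}$ makes $\vartheta\tau^{-\alpha}<1$, and the ordering $\alpha>\beta$, $\tau<1$ forces $\vartheta\tau^{-\beta}<\vartheta\tau^{-\alpha}<1$, so all three series converge and boundedness of $Z$ kills the tail $\vartheta^k Z(t_k)$; this matches what Giusti does. Two small points worth fixing: (i) your concrete choice $\tau=\vartheta^{1/(2\alpha)}$ degenerates when $\vartheta=0$, so either treat $\vartheta=0$ separately (where a single application of the hypothesis already gives the claim with $c=1$) or pick, say, $\tau=\max\{1/2,\vartheta^{1/(2\alpha)}\}$; (ii) since $(1-\tau)^{-\beta}\le(1-\tau)^{-\alpha}$ and $1-\vartheta\tau^{-\beta}\ge 1-\vartheta\tau^{-\alpha}$, the coefficient in front of $B$ is dominated by the one in front of $A$, which is why the final constant can indeed be taken to depend only on $\alpha$ and $\vartheta$ and not on $\beta$ — you should state this explicitly, as it is the only place the hypothesis $\alpha>\beta$ is genuinely used.
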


\begin{lemma}\label{la:nondeg}
 Let $\uei{1}$ be a minimizer of $\Iei{1}$ and  let $\partial\Omega(\uei{1})$ satisfy \eqref{eq:doubprop}.
  There exists a $c_1 = c_1(\eps,n,\omega_0,\sigma) >0$  such that there holds 
  \begin{equation}\label{eq:nondeg_claim}
     c_1 \,R \leq \sup_{B_R(x_0)}\abs{\grad\uei{1}}, 
  \end{equation}
  where $x_0 \in \partial\Omega(\uei{1})$ and $0<R\leq R_0$.
  In particular, $c_1$ is independent of the choice of $x_0$ and $R$. 
\end{lemma}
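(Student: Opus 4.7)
The plan is to argue by contradiction in the spirit of Alt--Caffarelli, using the rewarding branch of $\pe{1}$ as the mechanism that forces quadratic growth of $\uei{1}$ away from the free boundary. Suppose, toward a contradiction, that there exist $x_0\in\partial\Omega(\uei{1})$ and $0<R\le R_0$ with $\sup_{B_R(x_0)}\abs{\grad\uei{1}}\le c_1R$, where $c_1\in(0,1]$ will be fixed at the end. Since $\uei{1}(x_0)=0$ and $\abs{\grad\uei{1}(x_0)}=0$, integrating along segments in the convex ball $B_R(x_0)$ yields the pointwise bounds $\abs{\uei{1}}\le c_1R^2$ and $\abs{\grad\uei{1}}\le c_1R$ throughout $B_R(x_0)$, and because $\uei{1}\equiv 0$ off $\Omega(\uei{1})$ these integrate to
\[
 \int_{B_R(x_0)}\abs{\uei{1}}^{2}\,dx\le c_1^2R^4\abs{B_R(x_0)\cap\Omega(\uei{1})},\quad \int_{B_R(x_0)}\abs{\grad\uei{1}}^2\,dx\le c_1^2R^2\abs{B_R(x_0)\cap\Omega(\uei{1})}.
\]

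As competitor I would take $v:=(1-\eta)\uei{1}$, where $\eta\in C_c^\infty(B_R(x_0))$ is a standard cutoff with $\eta\equiv 1$ on $B_{R/2}(x_0)$ and $\abs{D^j\eta}\le CR^{-j}$ for $j=1,2$. Since $\uei{1}$ and $\grad\uei{1}$ vanish on $\partial\Omega(\uei{1})$, the product $\eta\uei{1}$ is admissible in the weak form of the Euler--Lagrange equation of Remark \ref{rem:PDE}. Testing $\bilap\uei{1}+\Lei{1}\lap\uei{1}=0$ against $\eta\uei{1}$ and inserting $v=\uei{1}-\eta\uei{1}$ yields the key cancellation
\[
 \int\abs{\lap v}^2-\Lei{1}\int\abs{\grad v}^2=\int\abs{\lap(\eta\uei{1})}^2-\Lei{1}\int\abs{\grad(\eta\uei{1})}^2\le \int\abs{\lap(\eta\uei{1})}^2.
\]
A standard estimate shows $\int\abs{\grad v}^2\ge 1/2$ (after absorbing a term of order $c_1^2R^{n+2}$, which can be made arbitrarily small, with any residual discrepancy shifted into the final constant), and Theorem \ref{theo:Vol_1_1} ensures $\abs{\Omega(v)}\le\abs{\Omega(\uei{1})}\le\omega_0$, so that we remain in the rewarding range of $\pe{1}$. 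The minimality $\Iei{1}(\uei{1})\le\Iei{1}(v)$ together with $\pe{1}(\abs{\Omega(v)})-\pe{1}(\abs{\Omega(\uei{1})})\le-\eps\abs{B_{R/2}(x_0)\cap\Omega(\uei{1})}$ then produces the target inequality
\[
 \eps\,\abs{B_{R/2}(x_0)\cap\Omega(\uei{1})}\le\Ray(v)-\Lei{1}\le 2\int_{B_R(x_0)}\abs{\lap(\eta\uei{1})}^2\,dx.
\]

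Expanding $\lap(\eta\uei{1})=\eta\lap\uei{1}+2\grad\eta\cdot\grad\uei{1}+\uei{1}\lap\eta$ and plugging in the cutoff bounds, the right hand side is controlled by $C\bigl[R^{-4}\int\abs{\uei{1}}^2+R^{-2}\int\abs{\grad\uei{1}}^2+\int\abs{\lap\uei{1}}^2\bigr]$ over $B_R(x_0)$. The first two terms are immediately bounded by $Cc_1^2\abs{B_R(x_0)\cap\Omega(\uei{1})}$ by the integrated pointwise bounds above. For the third term I would prove a Caccioppoli-type estimate: testing $\bilap\uei{1}+\Lei{1}\lap\uei{1}=0$ against $\zeta^{2}\uei{1}$ for a cutoff $\zeta\equiv1$ on $B_t(x_0)$ and supported in $B_s(x_0)$, $R/2\le t<s\le R$, and absorbing the $\zeta^2\abs{\lap\uei{1}}^2$ cross terms generated by the product rule via Young's inequality, puts $Z(s):=\int_{B_s(x_0)}\abs{\lap\uei{1}}^2\,dx$ into the hypothesis of Lemma \ref{la:giusti}. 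Applying it on $[R/2,R]$ yields $\int_{B_{R/2}(x_0)}\abs{\lap\uei{1}}^2\,dx\le Cc_1^2\abs{B_R(x_0)\cap\Omega(\uei{1})}$. Taking the original cutoff $\eta$ to be supported already in $B_{R/2}(x_0)$ and equal to $1$ on $B_{R/4}(x_0)$ and combining everything produces $\eps\abs{B_{R/4}(x_0)\cap\Omega(\uei{1})}\le Cc_1^2\abs{B_R(x_0)\cap\Omega(\uei{1})}$. Iterating the doubling property \eqref{eq:doubprop} twice gives $\abs{B_R(x_0)\cap\Omega(\uei{1})}\le\sigma^2\abs{B_{R/4}(x_0)\cap\Omega(\uei{1})}$; dividing by the strictly positive quantity $\abs{B_{R/4}(x_0)\cap\Omega(\uei{1})}>0$ then yields $c_1^2\ge\eps/(C\sigma^2)$, which contradicts any sufficiently small a priori choice $c_1=c_1(\eps,n,\omega_0,\sigma)$.

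The hardest part will be the Caccioppoli estimate for $\int\abs{\lap\uei{1}}^2$: in the second order Alt--Caffarelli setting the corresponding gradient term comes for free from $H^{1,2}$-orthogonality, but here the fourth order product rule generates several cross terms of order $\zeta^2\abs{\lap\uei{1}}^2$ that must be absorbed carefully, and this is precisely the point where Lemma \ref{la:giusti} enters. Conceptually, the essential role of the rewarding branch of $\pe{1}$ should also be emphasized: it is the negative contribution $-\eps\abs{B_{R/2}(x_0)\cap\Omega(\uei{1})}$ gained by erasing $\uei{1}$ on $B_{R/2}(x_0)$ that produces the strictly positive left hand side in the target inequality, so that the whole argument collapses if $\pe{1}$ is replaced by the merely non-decreasing $\pe{0}$.
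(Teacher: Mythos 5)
Your proposal is essentially the paper's proof, recast as a contradiction argument. You use the same competitor (a cutoff of $\uei{1}$ that annihilates it on an inner ball), exploit the same mechanism (the rewarding branch of $\pe{1}$ turns the erased volume into a strictly positive term on the left), invoke the same iteration lemma (Lemma \ref{la:giusti}), and close with the same use of the doubling property. The only genuine organizational difference: the paper keeps $\int_{B_s}\abs{\lap\uei{1}}^2$ on the left-hand side of the minimality comparison \eqref{eq:inner_cond_1} from the start, so the hole-filling and Lemma \ref{la:giusti} are applied once to the combined quantity $Z(t)=\int_{B_t}\abs{\lap\uei{1}}^2+\frac{\eps}{4}\abs{B_t\cap\Omega(\uei{1})}$; you instead divide through by $\int\abs{\grad v}^2$ early, drop the Laplacian term, and must then recover control of $\int\abs{\lap\uei{1}}^2$ by a separate Caccioppoli estimate, again requiring Lemma \ref{la:giusti} (because the term $\int\lap\uei{1}\,\uei{1}\abs{\grad\zeta}^2$ lacks a $\zeta^2$ weight and forces hole-filling). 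This works and you correctly identified the subtle point, but it is an extra step the paper avoids. A further cosmetic difference is that you test the Euler--Lagrange equation of Remark \ref{rem:PDE} against $\eta\uei{1}$ to get the key cancellation, whereas the paper derives the same inequality directly from the minimality $\Iei{1}(\uei{1})\le\Iei{1}(\hat{v})$; with the normalization $\int_B\abs{\grad\uei{1}}^2=1$ both yield the same starting estimate.
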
 
\begin{proof}
  Let $x_0 \in \partial\Omega(\uei{1})$ and $0<R\leq R_0$. 
  For $\frac{R}{2} \leq t < s \leq R$, let  $\eta \in C^\infty(\R^n)$ satisfy $0 \leq \eta \leq 1$, $\eta \equiv 0$ in $B_t(x_0)$ and $\eta \equiv 1$ in $R^n\setminus B_s(x_0) $.  Note that for every $x \in B_s(x_0)\setminus B_t(x_0)$ there holds 
  \begin{equation}\label{eq:eta}
     \abs{\eta(x)} \leq \frac{C(n)}{s-t} \; \mbox{and } \abs{\lap\eta(x)}\leq \frac{C(n)}{(s-t)^2}, 
  \end{equation}
  where $C(n)$ only depends on $n$.  We use $\uei{1}\eta$ as  a comparison function for $\Iei{1}$. By the minimality of $\uei{1}$ we obtain
  \[
    \Iei{1}(\uei{1}) \leq \Iei{1}(\uei{1}\eta). 
  \]
   Since, by construction, there holds  $\abs{\O(\eta\uei{1})}= \abs{\O(\uei{1})} - \abs{B_t(x_0)\cap \Omega(\uei{1})}$, straight forward computation yields
  \begin{equation}\label{eq:inner_cond_1}\begin{split}
    \int\limits_{B_s(x_0)}\abs{\lap \uei{1}}^2dx + \eps\,\abs{B_t&(x_0)\cap\Omega(\uei{1})} \leq \int\limits_{B_s\setminus B_t(x_0)}\abs{\lap(\eta\uei{1})}^2dx \\ &+ (\L(\Omega(\uei{1})+\eps\abs{B_t(x_0)\cap\Omega(\uei{1})})\int\limits_{B_s(x_0)}\abs{\grad \uei{1}}^2dx. 
  \end{split}\end{equation}
  A detailed analysis of this inequality gives the claim. This is done in three steps. 
  
  Step 1. \quad We estimate the last summand on the right hand side of \eqref{eq:inner_cond_1}. Recall that 
  \begin{align*}
     \L(\Omega(\uei{1})) &\leq \left(\frac{\omega_n}{\omega_0}\right)^\frac{2}{n}\L(B_1) + \eps\,(\omega_0 - \abs{\O(\uei{1})})\\
     &\leq \left(\frac{\omega_n}{\omega_0}\right)^\frac{2}{n}\L(B_1) + \eps_1\,\omega_0 = C(n,\omega_0). 
  \end{align*}
  Consequently, 
  \[
    \L(\Omega(\uei{1})) + \eps\abs{B_t(x_0)\cap\Omega(\uei{1})} \leq C(n,\omega_0) + \eps_1\abs{B_{R_0}(x_0)} \leq C(n,\omega_0)
  \]
  and \eqref{eq:inner_cond_1} becomes
  \begin{equation}\label{eq:inner_cond_2}
  \begin{split}
       \int\limits_{B_s(x_0)}\abs{\lap \uei{1}}^2dx + \eps\,\abs{B_t(x_0)&\cap\Omega(\uei{1})} \\ &\leq \int\limits_{B_s\setminus B_t(x_0)}\abs{\lap(\eta\uei{1})}^2dx + C(n,\omega_0)\int\limits_{B_s(x_0)}\abs{\grad \uei{1}}^2dx. 
  \end{split}
  \end{equation}
  
  Step 2. \quad Applying Young's inequality we estimate 
  \[
    \abs{\lap(\uei{1}\eta)}^2 \leq 4\left(\abs{\lap \uei{1}}^2\eta^2 + 2\abs{\grad\uei{1}.\grad\eta}^2 + (\uei{1})^2\abs{\lap\eta}^2\right)
  \]
  and together with \eqref{eq:eta} we deduce in $B_s(x_0)\setminus B_t(x_0)$
  \[
     \abs{\lap(\uei{1}\eta)}^2 \leq 4\left(\abs{\lap\uei{1}}^2\eta^2 + 2\frac{C(n)}{(s-t)^2}\abs{\grad\uei{1}}^2 + \frac{C(n)}{(s-t)^4}(\uei{1})^2\right). 
  \]
  With the splitting 
  \[
     \int\limits_{B_s(x_0)}\abs{\lap\uei{1}}^2dx = \int\limits_{B_s\setminus B_t(x_0) }\abs{\lap\uei{1}}^2  dx + \int\limits_{B_t(x_0)}\abs{\lap\uei{1}}^2 dx
  \]
  \eqref{eq:inner_cond_2} becomes
  \begin{align*}
     &\int\limits_{B_t(x_0)}\abs{\lap\uei{1}}^2dx + \eps\,\abs{B_t(x_0)\cap\Omega(\uei{1})} \leq \int\limits_{B_s\setminus B_t(x_0)}\abs{\lap\uei{1}}^2(4\eta^2-1)dx \\
     &+ \int\limits_{B_s\setminus B_t(x_0)}\frac{C(n)}{(s-t)^2}\abs{\grad\uei{1}}^2 + \frac{C(n)}{(s-t)^4}(\uei{1})^2dx + C(n,\omega_0)\int\limits_{B_s(x_0)}\abs{\grad \uei{1}}^2dx,
  \end{align*}
  where $C(n)$ collects all constants only depending on $n$. Since $0\leq\eta\leq 1$, we obtain
\begin{align*}
     &\int\limits_{B_t(x_0)}\abs{\lap\uei{1}}^2dx + \eps\,\abs{B_t(x_0)\cap\Omega(\uei{1})} \leq 3 \int\limits_{B_s\setminus B_t(x_0)}\abs{\lap\uei{1}}^2dx \\
     &\quad+C(n,\omega_0)\, \int\limits_{B_R(x_0)}\left(1+\frac{1}{(s-t)^2}\right)\abs{\grad\uei{1}}^2 + \frac{1}{(s-t)^4}(\uei{1})^2dx .
\end{align*}
Now we add $3\int_{B_t(x_0)}\abs{\lap\uei{1}}^2dx$ to both sides of the above inequality and divide the resulting inequality by $4$. Subsequently, we add $\frac{3\,\eps}{16}\abs{B_s(x_0)\cap\Omega(\uei{1})}$ to the right hand side. This leads to
\begin{equation}\begin{split}\label{eq:inner_cond_3}
    &\int\limits_{B_t(x_0)}\abs{\lap\uei{1}}^2dx + \frac{\eps}{4}\,\abs{B_t(x_0)\cap\Omega(\uei{1})} \leq \frac{3}{4}\left( \int\limits_{B_s(x_0)}\abs{\lap\uei{1}}^2dx + \frac{\eps}{4}\abs{B_s(x_0)\cap\Omega(\uei{1})}\right) 
    \\& \quad+C(n,\omega_0)\, \int\limits_{B_R(x_0)}\left(1+\frac{1}{(s-t)^2}\right)\abs{\grad\uei{1}}^2 + \frac{1}{(s-t)^4}(\uei{1})^2dx .
\end{split}\end{equation}
Setting 
\[
  Z(t) := \int\limits_{B_t(x_0)}\abs{\lap\uei{1}}^2dx + \frac{\eps}{4}\,\abs{B_t(x_0)\cap\Omega(\uei{1})},
\]
estimate \eqref{eq:inner_cond_3} enables us to apply Lemma \ref{la:giusti} and  we obtain
\[
     \int\limits_{B_\frac{R}{2}(x_0)}\abs{\lap\uei{1}}^2dx + \frac{\eps}{4}\,\abs{B_\frac{R}{2}(x_0)\cap\Omega(\uei{1})} \leq
   C(n,\omega_0)\, \int\limits_{B_R(x_0)}\left(1+\frac{1}{R^2}\right)\abs{\grad\uei{1}}^2 + \frac{1}{R^4}(\uei{1})^2dx.
\]
Step 3.\quad The $C^{1,\alpha}$ regularity of $\uei{1}$ allows us to estimate 
\[
   \abs{\uei{1}(x)} \leq 2\,R\,\sup_{B_R(x_0)}\abs{\grad \uei{1}} 
\]
for every $x\in B_R(x_0)$. Moreover, we assume $R\leq R_0<1$. Hence, we find
\begin{equation}\label{eq:why_dp}
\begin{split}
      \int\limits_{B_\frac{R}{2}(x_0)}\abs{\lap\uei{1}}^2dx + \frac{\eps}{4}\,\abs{B_\frac{R}{2}(x_0)&\cap\Omega(\uei{1})} \\ & \leq
   C(n,\omega_0)\, R^{-2}\,\sup_{B_R(x_0)}\abs{\grad \uei{1}}^2\,\abs{B_R(x_0)\cap\Omega(\uei{1})}.
\end{split}
\end{equation}
Since we assume the doubling property \eqref{eq:doubprop} to hold true, omitting the nonnegative integral on the left hand side we obtain
\[
 \frac{\eps}{4} \leq C(n,\omega_0)\,\sigma\, R^{-2}\,\sup_{B_R(x_0)}\abs{\grad \uei{1}}^2.
\]
This proves the claim.
\end{proof}

Note that the assumption \eqref{eq:doubprop} enables us to compare $\abs{B_\frac{R}{2}(x_0)\cap\Omega(\uei{1})}$ with $\abs{B_R(x_0)\cap\Omega(\uei{1})}$ in estimate \eqref{eq:why_dp}. This assumption is only needed because we currently do not have any further information about the free boundary $\partial \Omega(\uei{1})$ and could be replaced by regularity properties of the free boundary. However, let us emphasize that the rewarding property of the penalization term $\pe{1}$ is crucial for proving Lemma \ref{la:nondeg} and cannot be replaced since the rewarding term yields the strictly positive lower bound in \eqref{eq:nondeg_claim}. 

The nondegeneracy of $\uei{1}$ along the free boundary according to Lemma \ref{la:nondeg} allows us to establish a lower bound on the density quotient of $\Omega(\uei{1})$. 

\begin{lemma}\label{la:density}
  Let $\uei{1}$ be a minimizer of $\Iei{1}$, $\alpha \in (0,1)$ and let $\partial\Omega(\uei{1})$ satisfy \eqref{eq:doubprop}. 
  There exists a constant $c_2 = c_2(\eps,n,\omega_0,\sigma,\alpha)>0$ such that for every $x_0 \in \partial\Omega(\uei{1})$ and every $0<R\leq R_0$ there holds 
  \[
     c_2 \,\abs{B_R}^\frac{1-\alpha}{\alpha} \leq \frac{\abs{\Omega(\uei{1})\cap B_R(x_0)}}{\abs{B_R}}.
  \]
\end{lemma}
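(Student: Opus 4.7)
The plan is to combine the nondegeneracy estimate of Lemma \ref{la:nondeg} with the $C^{1,\alpha}$-regularity from Theorem \ref{theo:hoelder_reg}: the idea is to produce, inside $B_R(x_0)$, a ball of radius proportional to $R^{1/\alpha}$ on which $|\grad{\uei{1}}|$ stays strictly positive, so that this ball lies in $\Omega(\uei{1})$ up to a null set and meets $B_R(x_0)$ in a set of controlled measure.

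More precisely, I would fix $x_0 \in \partial\Omega(\uei{1})$ and $0<R\leq R_0$, and use Lemma \ref{la:nondeg} to pick $y \in \overline{B_R(x_0)}$ with $|\grad{\uei{1}}(y)| \geq \tfrac{c_1}{2} R$. Theorem \ref{theo:hoelder_reg} supplies a Hölder constant $C_H$ for $\grad{\uei{1}}$ on $\overline B$; although it a priori depends on $\Lei{1}$ and $|\O(\uei{1})|$, both can be bounded in terms of $n, \omega_0, \eps$ (via Remark \ref{rem:buckling_load_omega_0} for the upper bound on $\Lei{1}$, and Theorems \ref{theo:Vol_1_1} and \ref{theo:VolCond_low} for the two-sided control of $|\O(\uei{1})|$), so in fact $C_H = C_H(n,\omega_0,\eps,\alpha)$. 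Choosing $r := (c_1 R/(4C_H))^{1/\alpha}$, the Hölder bound forces
\[
|\grad{\uei{1}}(x)| \geq |\grad{\uei{1}}(y)| - C_H|x-y|^\alpha \geq \tfrac{c_1 R}{4} > 0
\]
for every $x \in B_r(y)$, and since $\uei{1} \in C^1(\overline B)$ vanishes identically on $B\setminus\O(\uei{1})$, one has $\grad{\uei{1}}=0$ a.e.\ on $B\setminus\O(\uei{1})$, which gives $|B_r(y)\setminus\Omega(\uei{1})|=0$.

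It remains to intersect with $B_R(x_0)$. After possibly shrinking $R_0$ (absorbing the new threshold into the parameters) so that $r \leq R$ is ensured, convexity of $B_R(x_0)$ together with $y \in \overline{B_R(x_0)}$ yields the elementary inequality $|B_r(y)\cap B_R(x_0)| \geq c(n) r^n$. Hence
\[
|\Omega(\uei{1}) \cap B_R(x_0)| \,\geq\, c(n)\Bigl(\tfrac{c_1}{4C_H}\Bigr)^{n/\alpha} R^{n/\alpha},
\]
and dividing by $|B_R| = \omega_n R^n$ produces the desired lower bound of the form $c_2|B_R|^{(1-\alpha)/\alpha}$ with $c_2 = c_2(\eps,n,\omega_0,\sigma,\alpha)$. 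The main obstacle is purely bookkeeping — tracking that the nondegeneracy constant $c_1$ (from Lemma \ref{la:nondeg}) and the Hölder constant $C_H$ (from Theorem \ref{theo:hoelder_reg}) indeed depend only on the permitted parameters. Note also that the loss factor $(1-\alpha)/\alpha$, which vanishes as $\alpha \to 1$, reflects precisely the gap between $C^{1,\alpha}$ regularity and a genuine positive lower density bound, which would require a full $C^{1,1}$-type estimate not available here.
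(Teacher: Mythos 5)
Your proof is correct and follows essentially the same route as the paper: apply Lemma \ref{la:nondeg} to obtain a point near $x_0$ with gradient of size $\gtrsim R$, then use the $C^{1,\alpha}$ Hölder bound on $\grad\uei{1}$ to force a ball of radius $\sim R^{1/\alpha}$ into $\Omega(\uei{1})\cap B_R(x_0)$, giving the stated density bound after dividing by $|B_R|$. The paper's implementation is slightly slicker and avoids your $r\leq R$ bookkeeping: it applies Lemma \ref{la:nondeg} on $B_{R/2}(x_0)$ to get $x_1\in\Omega(\uei{1})\cap\overline{B_{R/2}(x_0)}$, sets $d=\dist(x_1,\partial\Omega(\uei{1}))$ (automatically $\leq R/2$ since $x_0\in\partial\Omega(\uei{1})$), and reads the lower bound $d\gtrsim R^{1/\alpha}$ directly from the Hölder estimate at a nearest boundary point $x_2$ where $\grad\uei{1}(x_2)=0$, so that $B_d(x_1)\subset\Omega(\uei{1})\cap B_R(x_0)$ with no need to argue about sets where $|\grad\uei{1}|>0$.
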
 
Although this lower bound on the density quotient is admittedly weak, it suffices to prove that $\Omega(\uei{1})$ can be rescaled to the volume $\omega_0$ without leaving the reference domain $B$ provided the radius of $B$ is chosen sufficiently large (see Theorem \ref{theo:radius}). 

\begin{proof}[Proof of Lemma \ref{la:density}.] 
  Let $x_0 \in \partial\Omega(\uei{1})$ and $0<R\leq R_0$. According to Lemma \ref{la:nondeg} and  the $C^{1,\alpha}$ regularity of $\uei{1}$ there exists an $x_1 \in \Omega(\uei{1})\cap\overline{B_\frac{R}{2}(x_0)}$ such that 
 \[
    c_1 \frac{R}{2} \leq \sup_{B_\frac{R}{2}(x_0)}\abs{\grad\uei{1}} = \abs{\grad \uei{1}(x_1)}.
 \]
 Now choose $x_2 \in \partial\Omega(\uei{1})$ such that 
 \[
   d:=  \dist(x_1,\partial\Omega(\uei{1}) =  \abs{x_1-x_2}.
 \]
 Since $\abs{\grad \uei{1}(x_2)}=0$, we obtain 
 \[
    c_1 \frac{R}{2} \leq \abs{\grad\uei{1}(x_1)-\grad\uei{1}(x_2)} \leq L_\alpha\,d^\alpha,
 \]
 where $L_\alpha=L(n,\omega_0,\alpha)$ denotes the $\alpha$-Hölder coefficient of $\grad\uei{1}$. By construction, there holds  $B_d(x_1)\subset \Omega(\uei{1})\cap B_R(x_0)$. Consequently, we may proceed to
 \begin{align*}
     \left(\frac{c_1}{2L_\alpha}\right)^n\,R^n \leq d^{\alpha\,n} &\aq \left(\frac{c_1}{2L_\alpha \omega_n^{1-\alpha}}\right)^n\,\abs{B_R} \leq \abs{B_d(x_1)}^\alpha \\
     &\Rightarrow  \left(\frac{c_1}{2L_\alpha \omega_n^{1-\alpha}}\right)^n\,\abs{B_R} \leq \abs{\Omega(\uei{1})\cap B_R(x_0)}^\alpha.
 \end{align*}
 This proves the claim.
\end{proof}

\subsubsection{The volume condition for $\Omega(\uei{1})$}

The next theorem is the key observation to show that $\Omega(\uei{1})$ has the volume $\omega_0$ provided that $\partial\Omega(\uei{1})$ satisfies \eqref{eq:doubprop}.  
It is a consequence of the lower bound on the density quotient according to Lemma \ref{la:density}. 

\begin{theorem}\label{theo:radius}
 Let $\eps\leq\eps_1$ and let $B = B_{R_B}(0)$. Provided that $R_B$ is chosen sufficiently large, for every minimizer $\uei{1}$ of $\Iei{1}$ such that $\partial\Omega(\uei{1})$ satisfies \eqref{eq:doubprop} there  holds 
\begin{enumerate}[label=\alph*)]
  \item $\Omega(\uei{1})$ is compactly contained in $B_{2^{-\frac{1}{n}}R_B}(0)$ \; or
  \item there exists a translation $\Phi: \R^n\to\R^n$ such that $\Phi(\Omega(\uei{1}))$ is compactly contained in $B_{2^{-\frac{1}{n}}R_B}(0)$. 
\end{enumerate}
\end{theorem}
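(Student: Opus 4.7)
The strategy is to use Lemma \ref{la:density} together with the connectedness of $\Omega(\uei{1})$ from Lemma \ref{la:connected_1} to establish an a priori bound $D_0$ on $\operatorname{diam}(\Omega(\uei{1}))$ that depends only on $n$, $\omega_0$, $\eps$ and the doubling data $(\sigma,R_0)$, but \emph{not} on $R_B$. Once such a $D_0$ is produced, it suffices to choose $R_B$ large enough that $D_0 < 2^{-1/n}R_B$: then $\overline{\Omega(\uei{1})}$ fits inside a closed ball of radius strictly less than $2^{-1/n}R_B$ centred at any of its points, which either is already a subset of $B_{2^{-1/n}R_B}(0)$ (case a)) or can be translated inside $B_{2^{-1/n}R_B}(0)$ via the translation $\Phi$ sending the centre of the smallest enclosing ball of $\overline{\Omega(\uei{1})}$ to the origin (case b)).

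To produce the diameter bound, set $\Omega := \Omega(\uei{1})$ and first observe the interior estimate $\dist(x,\partial\Omega) \leq \rho_{\max} := (\omega_0/\omega_n)^{1/n}$ for every $x \in \Omega$, a direct consequence of $B_{\dist(x,\partial\Omega)}(x) \subset \Omega$ and $\abs{\Omega}\leq\omega_0$. Next, fix $\alpha \in (0,1)$ and pick a maximal pairwise disjoint family of closed balls $\overline{B_{R_0}(p_1)},\ldots,\overline{B_{R_0}(p_N)}$ with centres $p_i \in \partial\Omega$. Applying Lemma \ref{la:density} to each such ball and summing yields
\begin{equation*}
  N \leq N_0 := \omega_0\,c_2^{-1}\,\abs{B_{R_0}}^{-1/\alpha}.
\end{equation*}
A standard Vitali-type argument built on the maximality of the packing shows $\partial\Omega \subset \bigcup_{i=1}^N \overline{B_{2R_0}(p_i)}$, and combined with the interior estimate
\begin{equation*}
  \overline{\Omega} \;\subset\; \bigcup_{i=1}^N \overline{B_r(p_i)}, \qquad r := 2R_0 + \rho_{\max}.
\end{equation*}
Since $\Omega$, and hence $\overline{\Omega}$, is connected by Lemma \ref{la:connected_1}, $\overline{\Omega}$ lies in a single connected component of this union; a chain argument through at most $N$ of the constituent balls produces the bound $\operatorname{diam}(\Omega) \leq 2rN \leq 2(2R_0+\rho_{\max})\,N_0 =: D_0$, which has the desired dependence.

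The chain/packing geometry is elementary; the actual obstacle is the analytic bookkeeping. Lemma \ref{la:density} carries the weak exponent $1/\alpha > 1$ together with a constant $c_2$ depending on $\eps,n,\omega_0,\sigma,\alpha$, so $D_0$ is genuinely sensitive to the (fixed) doubling radius $R_0$ and constant $\sigma$. One must therefore clarify that $(\sigma,R_0)$ are fixed at the outset: then the threshold $R_B^\ast := 2^{1/n}D_0$ depends on all of $n,\omega_0,\eps,\sigma,R_0,\alpha$ but is \emph{uniform} over all minimizers $\uei{1}$ whose free boundary satisfies \eqref{eq:doubprop} with these same constants. With this order of quantifiers in place, any $R_B \geq R_B^\ast$ (and $R_B^n > \omega_0/\omega_n$) fulfils the ``sufficiently large'' hypothesis of the theorem.
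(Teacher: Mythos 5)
Your proposal is correct, and it reaches the conclusion by a genuinely different geometric organization of the same estimate. The paper argues indirectly: it supposes $\partial\Omega(\uei{1})$ meets $\partial B_S(0)$ with $S=2^{-1/n}R_B$, reduces (by translation) to the case $0\in\Omega(\uei{1})$, decomposes $B_S(0)$ into $m\approx R_B/R_0$ concentric annuli, selects one free-boundary point on (nearly) every sphere $\partial B_{\frac{i+1}{m}S}(0)$, and sums Lemma \ref{la:density} over the resulting disjoint balls of radius $\approx R_0$; since $m$ grows linearly in $R_B$ while the number of "missing" inner annuli stays bounded via $\abs{B_{\frac{i_0}{m}S}}\le\omega_0$, the total measure forced into $\Omega(\uei{1})$ eventually exceeds $\omega_0$. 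You instead produce an a priori diameter bound $D_0$ independent of $R_B$: a maximal $R_0$-packing of $\partial\Omega(\uei{1})$ has at most $N_0=\omega_0 c_2^{-1}\abs{B_{R_0}}^{-1/\alpha}$ elements by Lemma \ref{la:density}, the $2R_0$-dilation covers $\partial\Omega(\uei{1})$, adding the interior reach $(\omega_0/\omega_n)^{1/n}$ covers $\overline{\Omega(\uei{1})}$, and connectedness (Lemma \ref{la:connected_1}) plus the nerve-graph chain argument gives $\operatorname{diam}\Omega(\uei{1})\le 2(2R_0+\rho_{\max})N_0$. Both proofs rest on Lemma \ref{la:density} applied to a disjoint family of balls of scale $R_0$ centered on $\partial\Omega(\uei{1})$, and both implicitly or explicitly need connectedness; the difference is that the paper runs the counting along a radial fibration adapted to $B$ whereas you run it intrinsically on $\partial\Omega(\uei{1})$ via packing. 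Your route has the virtue of isolating a clean intermediate quantity (the uniform diameter bound) and of making the quantifier structure and the role of $(\sigma,R_0,\alpha)$ explicit, while the paper's annular version avoids the Vitali/nerve-graph machinery and produces the contradiction directly without passing through a diameter statement. One minor point worth tightening if this were incorporated: the chain bound $2rN$ should really be read as a strict inequality, and the final threshold should also encode $R_B^n>\omega_0/\omega_n$ so that $B$ contains a ball of measure $\omega_0$ as required throughout the paper; you note this, which is good.
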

\begin{proof}
Let us think of the reference domain $B$ as of a ball centered at the origin and with radius $R_B$.
In addition, let $\uei{1}$ be a minimizer of $\Iei{1}$ for $\eps\leq\eps_1$. 
In order to prove the claim, let us assume that $\Omega(\uei{1})$ is not compactly contained in $B_{2^{-\frac{1}{n}}R_B}(0)$. For the sake of convenience, we abbreviate $S := 2^{-\frac{1}{n}}R_B$. Hence, we assume $\partial\Omega(\uei{1})\cap\partial B_{S}(0) \neq \emptyset$. 

Of course, there either holds $0 \in \Omega(\uei{1})$ or $0\not\in\Omega(\uei{1})$. At first, we will handle the case where the origin is already contained in $\Omega(\uei{1})$. Secondly, we will show that we may translate $\Omega(\uei{1})$ such that the origin becomes an inner point of $\Omega(\uei{1})$. 

Step 1. \quad We consider that $0\in\Omega(\uei{1})$. 
  
Note that for every $m\in\N$ with $m\geq 3$ there holds 
  \begin{equation}\label{eq:B_covering}
      B_S(0) = \bigcup_{i=0}^{m-2} B_{\frac{i+2}{m}S}(0) \setminus B_{\frac{i}{m}S}(0). 
  \end{equation}
  Since we assume that $\partial\Omega(\uei{1})\cap\partial B_S(0) \neq \emptyset$ and $0\in\Omega(\uei{1})$, there exists a smallest index $i_0=i_0(m)$ such that for each $i\geq i_0$ there exists an $x_i \in \partial\Omega(\uei{1})\cap\partial B_{\frac{i+1}{m}S}(0)$.  
  
 We fix $m\in\N$ such that $\frac{S}{m} \leq R_0.$ In addition, we fix an $\alpha \in (0,1)$.
 Then applying Lemma \ref{la:density}  for  $i_0\leq i\leq m-2$ we obtain
\begin{equation}\label{eq:radius_1}
c_2 \abs{B_\frac{S}{m}}^\frac{1}{\alpha} \leq \abs{\Omega(\uei{1})\cap B_\frac{S}{m}(x_i)}. 
\end{equation}
We now sum \eqref{eq:radius_1} from $i=i_0(m)$ to $i=m-2$. Since $B_\frac{S}{m}(x_i)\cap B_\frac{S}{m}(x_k)=\emptyset$ for $i\neq k$ and $\abs{\Omega(\uei{1})}\leq\omega_0$, this implies
\[
 c_2\,(m-1-i_0(m)) \, \abs{B_{\frac{S}{m}}}^\frac{1}{\alpha} \leq \sum_{i=i_0}^{m-2}\abs{\Omega(\uei{1})\cap B_\frac{S}{m}(x_i)} \leq \abs{\Omega(\uei{1})} \leq \omega_0.
\]
Note that since $B_{\frac{i_0}{m}S}(0)\subset\Omega(\uei{1})$ and $\abs{\Omega(\uei{1})}\leq\omega_0$,  $i_0(m)$ is bounded. 
Indeed, 
\[
\abs{B_{\frac{i_0}{m}S}} \leq \abs{\Omega(\uei{1})} \leq \omega_0 
\]
implies
\[
  i_0(m) \leq \left(\frac{\omega_0}{\omega_n}\right)^\frac{1}{n}\frac{m}{S}.
\]
Specifying the choice of $m\in \N$ such that 
\[
   \frac{R_0}{4} \leq \frac{S}{m}\leq \frac{R_0}{2}
\]
and recalling that $S:=2^{-\frac{1}{n}}R_B $, we obtain
\[
   c_2\,\left(\frac{2\,R_B}{2^\frac{1}{n}R_0}-1-\frac{4}{R_0}\left(\frac{\omega_0}{\omega_n}\right)^\frac{1}{n}\right) \abs{B_\frac{R_0}{4}}^\frac{1}{\alpha} \leq c_2 \,(m-1-i_0(m))\abs{B_\frac{S}{m}}^\frac{1}{\alpha} \leq \omega_0.
\]
Since this estimate is false if $R_B$ is chosen sufficiently large, the proof is finished provided that $0 \in \Omega(\uei{1})$. 
 
 Step 2. \quad Let us now assume that the origin is not contained in $\Omega(\uei{1})$. However, there exists an $x_0 \in \Omega(\uei{1})\cap B$. We now translate $\Omega(\uei{1})$ such that $x_0$ is translated to the origin, i.e. we consider 
 \[
  \Phi : \R^n \to \R^n,\, x \mapsto x-x_0. 
 \]
 We call $\Omega' :=\Phi(\Omega(\uei{1}))$ and $v_\eps(x) := \uei{1}(\Phi^{-1}(x))$. Thus, 
 \[
   \Omega' = \{x\in\R^n : v_\eps(x)\neq 0 \mbox{ or } (v_\eps(x)=0 \wedge \abs{\grad v_\eps(x)}>0)\} 
 \]
 and $v_\eps\in H^{2,2}_0(\Omega')$. Moreover, $\partial\Omega' = \Phi(\partial\Omega(\uei{1}))$. 
 
 Let us emphasize that, in general, $\Omega'$ may not be contained in $B$ and, thus, $v_\eps \not\in H^{2,2}_0(B)$.
 Note carefully that in Step 1 the minimality of $\uei{1}$ for $\Iei{1}$ is not used explicitly.  However, the minimality is necessary to establish Lemma \ref{la:nondeg} and, subsequently, Lemma \ref{la:density} and the application of Lemma \ref{la:density} leads to estimate \eqref{eq:radius_1}, which is the crucial observation in Step 1. 
 
 If $y_0 \in \partial\Omega'$ and $0<R\leq R_0$, then there exists a $z_0 \in \partial\Omega(\uei{1})$ such that $y_0 = \Phi(z_0)$. Lemma \ref{la:density} together with the translational invariance of the Lebesgue measure then imply
\begin{equation}\label{eq:radius_5}
    \abs{\Omega' \cap B_R(y_0)} = \abs{\Omega(\uei{1})\cap B_R(z_0)} \geq c_2 \abs{B_R}^\frac{1}{\alpha}.
\end{equation}
 Estimate \eqref{eq:radius_5} enables us to repeat the approach presented in Step 1.
Again we consider the segmentation \eqref{eq:B_covering} and assume that $\partial\Omega'\cap\partial B_S(0)$ is not empty. Then there exists a smallest index $i_0(m)$ such that for every $i_0\leq i \leq m-2$ there exists an $x_i \in \partial\Omega'\cap\partial B_{\frac{i+1}{m}S}(0)$. Now applying \eqref{eq:radius_5}  we obtain for $i_0\leq i \leq m-2$
\[
c_2\,\abs{B_\frac{S}{m}}^\frac{1}{\alpha} \leq \abs{\Omega' \cap B_\frac{S}{m}(x_i)}
\]
Since $\abs{\Omega'} = \abs{\Omega(\uei{1})} \leq \omega_0$ we may repeat the argumentation from Step 1 and obtain that $\Omega'=\Phi(\Omega(\uei{1}))$ is compactly contained in $B_{2^{-\frac{1}{n}}R_B}$ and, by construction, contains the origin. 
 \end{proof}
 
 As a direct consequence of Theorem \ref{theo:radius} we deduce that, if  $\partial\Omega(\uei{1})$ satisfies the doubling condition \eqref{eq:doubprop}, the domain $\Omega(\uei{1})$ satisfies $\abs{\Omega(\uei{1})}=\omega_0$.
 
 \begin{corollary}
   Let $\eps\leq\eps_0$ and let $\uei{1}\in H^{2,2}_0(B)$ minimize $\Iei{1}$. Provided that the radius $R_B$ of $B$ is chosen sufficiently large and that $\partial\Omega(\uei{1})$ satisfies the doubling property \eqref{eq:doubprop}, there holds 
   $\abs{\Omega(\uei{1})}=\omega_0$.
 \end{corollary}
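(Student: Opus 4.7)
The plan is to argue by contradiction, combining the dichotomy of Theorem \ref{theo:vol_dicho} with the containment alternative of Theorem \ref{theo:radius}. Assume that $\abs{\Omega(\uei{1})}<\omega_0$. Since $\eps\leq\eps_0$, Theorem \ref{theo:vol_dicho} forces case b): writing $t=(\omega_0/\abs{\Omega(\uei{1})})^{1/n}>1$, the rescaled set $t\,\Omega(\uei{1})$ has volume exactly $\omega_0$ and cannot be translated into the ball $B$.

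Next, I would exploit the quantitative lower volume bound of Theorem \ref{theo:VolCond_low}, which gives $\abs{\Omega(\uei{1})}\geq \alpha_0\omega_0$ with $\alpha_0\in(\tfrac12,1)$ whenever $\eps\leq\eps_1$ is sufficiently small (which is already guaranteed since $\eps\leq\eps_0\leq\eps_1$, possibly after shrinking $\eps_0$ so that $\alpha_0>\tfrac12$). Consequently,
\[
  t^n=\frac{\omega_0}{\abs{\Omega(\uei{1})}}\leq \alpha_0^{-1}<2,
\]
so the scaling factor satisfies $t<2^{1/n}$, and therefore $t\cdot 2^{-1/n}<1$.

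Now I would invoke Theorem \ref{theo:radius}, which applies precisely because the doubling property \eqref{eq:doubprop} is assumed and $R_B$ is taken sufficiently large. It yields a translation $\Phi:\R^n\to\R^n$ (possibly the identity) such that $\Phi(\Omega(\uei{1}))$ is compactly contained in $B_{2^{-1/n}R_B}(0)$. Applying the linear dilation $x\mapsto t\,x$ gives
\[
  t\,\Phi(\Omega(\uei{1}))\subset B_{t\,2^{-1/n}R_B}(0)\subset B_{R_B}(0)=B,
\]
and since dilation and translation commute up to a translation, namely $t\,\Phi(\Omega(\uei{1}))=\Psi(t\,\Omega(\uei{1}))$ for the translation $\Psi(x)=x-t\,x_0$ (writing $\Phi(x)=x-x_0$), the set $t\,\Omega(\uei{1})$ can indeed be translated into $B$. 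This contradicts case b) of Theorem \ref{theo:vol_dicho}. Hence the assumption $\abs{\Omega(\uei{1})}<\omega_0$ is untenable, and together with Theorem \ref{theo:Vol_1_1} we conclude $\abs{\Omega(\uei{1})}=\omega_0$.

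The argument is essentially book-keeping once the two main theorems are in hand; the only subtle point is checking that $\alpha_0>\tfrac12$ under the standing assumption $\eps\leq\eps_0$, which can be arranged by a further (harmless) reduction of $\eps_0$ using the explicit formula for $\alpha_0$ in Theorem \ref{theo:VolCond_low}. No new analytic input is required beyond the results already established.
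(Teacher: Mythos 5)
Your proof is correct and follows the paper's own argument almost verbatim: combine the lower bound $\alpha_0$ from Theorem \ref{theo:VolCond_low} with the compact containment (possibly after translation) of Theorem \ref{theo:radius}, then dilate by $\alpha^{-1/n}\leq 2^{1/n}$ and contradict case b) of Theorem \ref{theo:vol_dicho}. One small but genuine improvement over the paper: your willingness to shrink $\eps_0$ to guarantee $\alpha_0>\frac{1}{2}$ is in fact necessary, because the paper's concluding inequality $\frac{1+\eps_1^2-\sqrt{1+\eps_1^4}}{2\eps_1^2}\geq\frac{1}{2}$ is equivalent to $1\geq\sqrt{1+\eps_1^4}$ and thus fails for every $\eps_1>0$; the bound $\alpha_0\geq\frac{1}{2}$ really requires the extra smallness condition $\eps\,\eps_1\leq 4c_n-2$ (with $c_n>\frac{1}{2}$), which is exactly the harmless further reduction of $\eps_0$ that you propose.
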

 \begin{proof}
     Let $\eps\leq\eps_0$ and let  $\uei{1}\in H^{2,2}_0(B)$ be a minimizer of $\Iei{1}$ such that $\partial\Omega(\uei{1})$ satisfies \eqref{eq:doubprop}. In addition, we assume $\abs{\Omega(\uei{1})}=\alpha\omega_0$ for an $\alpha \in [\alpha_0,1)$. Note that since $c_n\geq \frac{1}{2}$ for every $n \in \N$, the quantity $\alpha_0$ given in Theorem \ref{theo:VolCond_low} satisfies
     \begin{equation}\label{eq:bound_alpha_0}
        \alpha_0\geq \frac{1+\eps_1^2-\sqrt{(1+\eps_1^2)^2-4c_n\eps_1^2}}{2\eps_1^2} \geq \frac{1+\eps_1^2-\sqrt{1+\eps_1^4}}{2\eps_1^2} \geq \frac{1}{2} 
     \end{equation}
     for every choice of $\omega_0$, which determines $\eps_1$. Applying Theorem \ref{theo:radius} the domain $\Omega(\uei{1})$ is compactly contained in $B_{2^{-\frac{1}{n}}R_B}$ (possibly after translation) and estimate \eqref{eq:bound_alpha_0} implies that the scaled domain $\alpha^{-\frac{1}{n}}\Omega(\uei{1})$ is a subset of $B_{R_B}$. This is a contradiction to Theorem \ref{theo:vol_dicho} since we assume that $\abs{\Omega(\uei{1})}<\omega_0$.
 \end{proof}

We finish with some concluding remarks.

\begin{remark}
The fundamental tone of a domain $\Omega \subset \R^n$ is defined as 
\[
  \Gamma(\Omega) := \min\left\{ \frac{\int_\Omega\abs{\lap v}^2dx}{\int_\Omega v^2dx}: v \in H^{2,2}_0(\Omega)\right\}. 
\]
Considering the functional $\Ie : H^{2,2}_0(B) \to \R$ by 
\[
  \Iei{k}(v) := \frac{\int_\Omega\abs{\lap v}^2dx}{\int_\Omega v^2dx} +\pe{k}(\abs{\O(v)}),
\]
where $\pe{k}$ and $\O(v)$ are defined as above, the approach presented in the Sections \ref{sec:penprob} and  \ref{sec:volcond} can be adopted to prove the existence of a connected domain $\Omega^*$ with $\abs{\Omega^*}=\omega_0$ such that
\[
  \Gamma(\Omega^*) = \min\{\Gamma(D): D\subset B, D \mbox{open}, \abs{D}\leq\omega_0 \}.
\]
\end{remark}

\begin{remark}
 Future work should address the following issues. 
\begin{itemize}
\item It remains an open problem how to prove regularity of the free boundaries $\partial\Omega(\uei{0})$ and $\partial\Omega(\uei{1})$, respectively. 
\item It remains open to prove the doubling property we assumed in Section \ref{sec:case_b} or to find another argument why the the enlarged domain $\alpha^{-\frac{1}{n}}\Omega(\uei{1})$ is still contained in $B$.
\item It remains open to prove existence of an optimal domain for minimizing the buckling load among all (probably unbounded) open subsets of $\R^n$ with given measure. 
\end{itemize}
\end{remark}

\noindent\textbf{Acknowledgment.} The author is funded by the Deutsche Forschungsgemeinschaft (DFG, German Research Foundation) - project number 396521072.


\begin{thebibliography}{99}
\bibitem{AguAltCaf1986}{Aguilera, N., Alt, H. W. and Caffarelli, L. A.: An optimization problem with volume constraint. \emph{SIAM J. Control Optim.}, 24(2): 191-198, 1986. }
\bibitem{AltCaf81}{Alt, H. W. and Caffarelli, L. A.: Existence and regularity for a minimum problem with free boundary. \emph{J. Reine Angew. Math.}, 325: 105-104, 1981}
\bibitem{BucurAshbaugh}{Ashbaugh, M. S. and Bucur, D.:  On the isoperimetric inequality for the buckling of a clamped plate. \emph{Z. Angew. Math. Phys.}, 54(5): 756-770, 2003}
\bibitem{AshLau1996}{Ashbaugh, M. S. and Laugesen, R. S.: Fundamental tones and buckling loads of clamped plates. \emph{Ann. Scuola Norm. Sup. Pisa Cl. Sci. (4)}, 23(2): 383-402, 1996}
\bibitem{BaWa09}{Bandle, C. and Wagner, A.: Optimization problems for an energy functional with mass constraint
	revisited. \emph{J. Math. Anal. Appl.},352(1): 400-417, 2009}
\bibitem{giaq_mult_int}{Giaquinta, M.: \emph{Multiple integrals in the calculus of variations and nonlinear elliptic
	systems}, volume 105 of \emph{Annals of Mathematics Studies}. Princeton University Press, Princeton, NJ, 1983}
\bibitem{Giusti_direct_methods}{Giusti, E.: \emph{Direct methods in the calculus of variations}. World Scientific Publishing Co., Inc., River Edge, NJ, 2003}
\bibitem{HanLin}{Han, Q. and Lin, F.: \emph{Elliptic Partial Differential Equations}, volume 1 of \emph{Courant Lecture Notes in Mathematics.} American Mathematical Society, 1997}
\bibitem{morrey}{Morrey, C. B.: \emph{Multiple integrals in the calculus of variations.} Springer, Berlin [u.a], 1966}
\bibitem{PolyaSzego}{Polya, G. and Szeg{\"o}, G.: \emph{Isoperimetric {I}nequalities in {M}athematical {P}hysics}. Annals of Mathematics Studies, no. 27, Princeton University Press, Princeton, NJ, 1951}
\bibitem{Sto2015}{Stollenwerk, K.: Optimal shape of a domain which minimizes the first buckling eigenvalue. \emph{Calc. Var. Partial Differential Equations}, 55(1): Art. 5, 29, 2016}
\bibitem{StoWa2015}{Stollenwerk, K. and Wagner, A.: Optimality conditions for the buckling of a clamped plate. \emph{J. Math. Anal. Appl.}, 432(1): 524-273, 2015}
\bibitem{Szego50}{Szeg{\"o}, G.: On membranes and plates. \emph{Proc. Nat. Acad. Sci. U.S.A.}, 36: 210-216, 1950}
\bibitem{Willms95}{Willms, B.: An Isoperimetric inequality for the buckling of a clamped plate. \emph{Lecture at the Oberwolfach meeting on 'Qualitative properties of
	PDE' (organized by H. Berestycki, B. Kawohl and G. Talenti)}, Feb. 1995}


\end{thebibliography}
\end{document}